\numberwithin{equation}{section}
\newtheorem{theorem}{Theorem}[section]
\newtheorem{corollary}[theorem]{Corollary}
\newtheorem{proposition}[theorem]{Proposition}
\newtheorem{lemma}[theorem]{Lemma}
\theoremstyle{definition}
\newtheorem{example}[theorem]{Example}
\newtheorem{problem}[theorem]{Problem}
\newtheorem{remark}[theorem]{Remark}
\theoremstyle{plain}
\newcommand{\w}{\omega}
\newcommand{\K}{\mathcal K}
\newcommand{\Ra}{\Rightarrow}
\newcommand{\IR}{\mathbb R}
\newcommand{\IN}{\mathbb N}
\newcommand{\V}{\mathcal V}
\newcommand{\U}{\mathcal U}
\newcommand{\E}{\mathcal E}
\newcommand{\e}{\varepsilon}
\newcommand{\Lc}{\mathsf{L}}
\newcommand{\Lin}{\mathsf{V}}
\newcommand{\FG}{\mathsf{F}}
\newcommand{\AG}{\mathsf{A}}
\newcommand{\BG}{\mathsf{B}}
\newcommand{\St}{\mathcal{S}t}
\newcommand{\supp}{\mathrm{supp}}
\newcommand{\cbox}{\boxdot}
\newcommand{\Tau}{\mathcal{T}}
\newcommand{\conv}{\mathrm{conv}}
\begin{document}

\title[$\mathfrak G$-bases in free (locally convex) topological vector spaces]{$\mathfrak G$-bases in free (locally convex) topological vector spaces}
\author{Taras Banakh and Arkady Leiderman}
\address{T. Banakh: Department of Mathematics, Ivan Franko National University of Lviv (Ukraine) and
 Instytut Matematyki, Jan Kochanowski University in Kielce (Poland)}
\email{t.o.banakh@gmail.com}
\address{A. Leiderman: Department of Mathematics, Ben-Gurion University of the Negev, Beer Sheva, P.O.B. 653, Israel}
\email{arkady@math.bgu.ac.il}
\keywords{Uniform space,  free locally convex space, free  topological vector space, monotone cofinal map,
$\w^\w$-dominance}
\subjclass[2010]{Primary 54D70, 54D45; Secondary 46A03; 06A06; 54A35; 54C30; 54E15; 54E20; 54E35}

\date{\today}

\begin{abstract}
A topological space $X$ is defined to have a local {\em $\mathfrak G$-base}
if every point $x\in X$ has a neighborhood base $(U_\alpha)_{\alpha\in\w^\w}$  such that $U_\beta\subset U_\alpha$ for all
 $\alpha\le\beta$ in $\w^\w$.
 We prove that for every Tychonoff space $X$ the following conditions are equivalent:
\begin{enumerate}
\item the free locally convex space $\Lc(X)$ of $X$ has a local $\mathfrak G$-base;
\item the free topological vector space $\Lin(X)$ of $X$ has a local $\mathfrak G$-base;
\item the finest uniformity $\U(X)$ of $X$ admits a $\mathfrak G$-base and the function space $C(X)$ is $\w^\w$-dominated.
\end{enumerate}
The conditions (1)--(3) imply that every metrizable continuous image of $X$ is $\sigma$-compact and all finite powers of $X$ are countably tight.
If the space $X$ is separable, then the conditions (1)--(3) imply that $X$ is a countable union of compact metrizable spaces and (1)--(3) are equivalent to:
\begin{itemize}
\item[(4)]  the finest uniformity $\U(X)$ of $X$ has a $\mathfrak G$-base.
\end{itemize}
If the space $X$ is first-countable and perfectly normal, then the conditions (1)--(3) are equivalent to:
\begin{itemize}
\item[(5)] $X$ is metrizable and $\sigma$-compact.
\end{itemize}
If the space $X$ is countable, then the conditions (1)--(4) are equivalent to:
\begin{itemize}
\item[(6)] the space $X$ has a local $\mathfrak G$-base.
\end{itemize}
If $X$ is a $k$-space, then the conditions (1)--(3) are equivalent to:
\begin{itemize}
\item[(7)] the double function space $C_k(C_k(X))$ has a local $\mathfrak G$-base.
\end{itemize}
Under $\w_1<\mathfrak b$ the conditions (1)--(3) are equivalent to
\begin{itemize}
\item[(8)] the finest uniformity $\U(X)$ of $X$ is $\w$-narrow and has a $\mathfrak G$-base.
\end{itemize}
Under $\w_1=\mathfrak b$ the conditions (1)--(3) are not equivalent to (8).

\noindent These results resolve several open problems previously stated in the literature.
\end{abstract}
\maketitle

\section{Introduction and Main Results}
It is known that a uniform space is metrizable if and only if its uniformity  has a countable base (see \cite{Eng}, Theorem 8.1.21).  Natural compatible uniform structures exist in topological groups, and, in particular, in topological vector spaces. This fact implies
the classical metrization theorem of Birkhoff and Kakutani which states that a topological group is metrizable if and only if it admits a countable base of neighborhoods of the identity, or, in other words, a base indexed by $\omega$.

In this paper we consider uniform spaces and topological spaces which have neighborhood bases indexed by a more complicated directed set $\w^\w$, which consists of all functions from $\w$ to $\w$ and is endowed with the natural partial order (defined by $f\le g$ iff $f(n)\le g(n)$ for all $n\in\w$).
Cardinal invariants of topological spaces, associated with combinatorial properties of the poset $\w^\w$
have been investigated for many years (see \cite{Douwen}, \cite{Vaug}).

A topological space $X$ is defined to have a {\em $\mathfrak G$-base} at a point $x\in X$ if there exists
a neighborhood base $(U_\alpha)_{\alpha\in\w^\w}$ at $x$ such that $U_\beta\subset U_\alpha$ for all
elements $\alpha\le\beta$ in $\w^\w$. We shall say that a topological space has a {\em local $\mathfrak G$-base} if it has a $\mathfrak G$-base at each point $x\in X$. Evidently, a topological group $G$ has a {\em local $\mathfrak G$-base} if it has a $\mathfrak G$-base at the identity $e\in G$.

For a topological space $X$ by $\U(X)$ we denote the finest uniformity on $X$, compatible with the topology of $X$. It is generated by the base consisting of entourages $[\rho]_{<1}:=\{(x,y)\in X\times X:\rho(x,y)<1\}$ where $\rho$ runs over all continuous pseudometrics on $X$. A uniformity $\U$ on a space $X$ is defined
\begin{itemize}
\item to be {\em $\w$-narrow} if for every entourage $U\in\U$ there exists a countable set $C\subset X$ such that $X=\{x\in X:\exists c\in C$ with $(x,c)\in U\}$;
\item  to have a {\em $\mathfrak G$-base} if there is a base of entourages $\{U_\alpha\}_{\alpha\in\w^\w}\subset\U(X)$  such that $U_\beta\subset U_\alpha$ for all $\alpha\le\beta$ in $\w^\w$.
\end{itemize}

Cascales and Orihuela were the first who considered uniform spaces admitting a $\mathfrak G$-base \cite{CO}.
They proved that compact spaces with this property are metrizable (recall that a compact space carries a unique compatible uniformity).
For the first time the concept of a $\mathfrak{G}$-base  appeared in \cite{FKLS} as a tool for studying locally convex spaces that belong to the class $\mathfrak{G}$ introduced by Cascales and Orihuela \cite{CO}. A systematic study of locally convex spaces and topological groups with a $\mathfrak{G}$-base has been started in \cite{GabKakLei_1}, \cite{GabKakLei_2} and continued in \cite{GabKak_1},
 \cite{GabKak_2}, \cite{LRZ}. The notion of a $\mathfrak{G}$-base in groups implicitly appears also in \cite{CFHT}.


Our current paper is devoted to detecting $\mathfrak G$-bases in two important classes of topological vector spaces: free locally convex spaces $\Lc(X)$ and free topological vector spaces $\Lin(X)$ over topological (or uniform) spaces $X$.

For a topological space $X$ its {\em free locally convex space} is a pair $(\Lc(X),\delta_X)$ consisting of a locally convex topological vector space $\Lc(X)$ and a continuous map $\delta_X:X\to \Lc(X)$ such that for every continuous map $f:X\to Y$ into a locally convex topological vector space $Y$ there exists a continuous linear operator $\bar f:\Lc(X)\to Y$ such that $\bar f\circ\delta_X=f$. Deleting the phrase ``locally convex'' in this definition, we obtain the definition of a {\em free topological vector space} $(\Lin(X),\delta_X)$ of $X$. All  topological vector spaces considered in this paper are over the field $\IR$ of real numbers; also
all topological vector spaces are assumed to be Hausdorff, and therefore Tychonoff.

It is well-known that for every topological space $X$ its free
(locally convex) topological vector space exists and is unique up to a topological isomorphism.

It is worth  mentioning that in the case of a Tychonoff topological space $X$,
the canonical map $\delta_X$ is a closed topological embedding, so we can identify the space $X$ with its image $\delta_X(X)$ and say that $X$ algebraically generates
 the free topological objects $\Lc(X)$ and $\Lin(X)$.

For an infinite Tychonoff space $X$ the free objects $\Lc(X)$ and $\Lin(X)$ are not first-countable (so do not admit neighborhood bases indexed by $\w$). On the other hand, it was  shown in \cite{GabKakLei_2} that the space $\Lc(X)$ admits a  local $\mathfrak{G}$-base provided $X$ is compact and metrizable (more generally, $X$ is a submetrizable $k_\w$-space). This result will be substantially generalized in our Theorem \ref{t:limit} and also extended to the spaces $\Lin(X)$.


Results of our research will be presented in two separate papers.
The current paper concentrates on detecting local $\mathfrak G$-bases in
free locally convex space $\Lc(X)$ and
free topological vector space $\Lin(X)$ over topological or uniform spaces $X$.
In the subsequent  paper \cite {BL2} we shall detect local $\mathfrak G$-bases in free topological groups $\FG(X)$, free Abelian topological groups $\AG(X)$ and  free Boolean topological groups $\BG(X)$ over topological or uniform spaces $X$.

Let us mention that for any Tychonoff space $X$ the free Abelian topological group $\AG(X)$ naturally embeds into the
free locally convex space $\Lc(X)$ \cite{Tkach, Usp}, therefore if $\Lc(X)$ has a local $\mathfrak G$-base, then $\AG(X)$ has a local $\mathfrak G$-base, too. On the contrary,
a recent relevant result obtained in \cite {LPT} shows that  $\Lc(X)$ need not have a local $\mathfrak G$-base even if $\AG(X)$ does. By \cite{LPT}, the free Abelian topological group $\AG(X)$ of a Tychonoff space $X$ has a local $\mathfrak G$-base if and only if the finest  uniformity $\U(X)$ of $X$ has a $\mathfrak G$-base. On the other hand, we shall prove that the free locally convex space $\Lc(X)$ of a Tychonoff space $X$ has a local $\mathfrak G$-base if and only if the finest uniformity $\U(X)$ of $X$ has a $\mathfrak G$-base and the space $C(X)$ of continuous real-valued functions on $X$ is {\em $\w^\w$-dominated} in the sense that it contains a subfamily $\{f_\alpha\}_{\alpha\in\w^\w}\subset C(X)$ such that $f_\alpha\le f_\beta$ for all $\alpha\le\beta$ in $\w^\w$, and for every $f\in C(X)$ there exists $\alpha\in\w^\w$ such  that $f\le f_\alpha$.




One of the main results of this paper is Theorem \ref{t:Lin+Lc} which is described below in a simplified form.
This theorem resolves completely an open problem posed in \cite{GabKakLei_2} (Question 4.15): {\em for which spaces $X$ the free locally convex space
$\Lc(X)$  has a local $\mathfrak G$-base?}

Namely, in Theorem~\ref{t:Lin+Lc} we shall prove that {for a Tychonoff space $X$
 the following conditions are equivalent:
\begin{enumerate}
\item[$(\Lc)$] the free locally convex space $\Lc(X)$ of $X$ has a local $\mathfrak G$-base;
\item[$(\Lin)$] the free topological vector space $\Lin(X)$ of $X$ has a local $\mathfrak G$-base;
\item[$(\U \mathsf C)$] the finest uniformity $\U(X)$ of $X$ has a $\mathfrak G$-base  and the function space $C(X)$ is $\w^\w$-dominated.
\end{enumerate}
The conditions $(\Lc),(\Lin)$ imply that every metrizable continuous image of $X$ is $\sigma$-compact, and all finite powers of $X$ are countably tight.
If the space $X$ is separable or $\w_1<\mathfrak b$, then the conditions  $(\Lc),(\Lin)$  imply that $X$ is a countable union of compact metrizable spaces.

\noindent If the space $X$ is separable, then  $(\Lc),(\Lin)$ are equivalent to
\begin{itemize}
\item[$(\U)$]  the finest uniformity $\U(X)$ of $X$ has a $\mathfrak G$-base.
\end{itemize}
If the space $X$ is first-countable and perfectly normal, then the conditions  $(\Lc),(\Lin)$  are equivalent to
\begin{itemize}
\item[$(M\sigma)$] $X$ is metrizable and $\sigma$-compact.
\end{itemize}
If the space $X$ is countable, then the conditions  $(\Lc),(\Lin)$  are equivalent to
\begin{itemize}
\item[$(\mathfrak G)$] the space $X$ has a local $\mathfrak G$-base.
\end{itemize}
If $X$ is a $k$-space, then  the conditions  $(\Lc),(\Lin)$  are equivalent to
\begin{itemize}
\item[$(C_k^2)$] the double function space $C_k(C_k(X))$ has a local $\mathfrak G$-base.
\end{itemize}
\smallskip

Here for a Tychonoff space $X$ by $C_k(X)$ we denote the space of real-valued continuous functions on $X$, endowed with the compact-open topology. The equivalence $(\Lc)\Leftrightarrow (C_k^2)$ (holding for $k$-spaces) answers Question 4.18 posed in \cite{GabKakLei_2}.
\smallskip

Concerning the condition ($\U$) let us mention the relevant characterization \cite{Ginsburg} of Tychonoff spaces $X$ whose finest uniformity $\U(X)$ has a countable base: those are exactly  metrizable spaces with compact set of non-isolated points.
\smallskip

In Theorem~\ref{t:Lin+Lc} we shall also obtain an interesting consistency result depending on the relation between the cardinals $\w_1$ and $\mathfrak b$. Following \cite{Douwen} by $\mathfrak b$ we denote the smallest cardinality of a subset $B\subset \w^\w$ which is not dominated by a countable set in $\w^\w$. It is well-known that $\w_1\le\mathfrak b$ and both  $\w_1=\mathfrak b$ and $\w_1<\mathfrak b$ are consistent with ZFC, see \cite{Douwen} or \cite{Vaug}. In particular, $\w_1<\mathfrak b$ holds under MA$+\neg$CH.

Namely, in Theorem~\ref{t:Lin+Lc}(8,9) we shall prove that under $\w_1<\mathfrak b$ for a Tychonoff space $X$ the conditions $(\Lc),(\Lin)$ are equivalent to
\begin{itemize}
\item[$(\w\U)$] the finest uniformity $\U(X)$ of $X$ is $\w$-narrow and has a $\mathfrak G$-base.
\end{itemize}}
On the other hand, under $\w_1=\mathfrak b$, the conditions $(\Lc),(\Lin)$ are not equivalent to $(\w\U)$. This means that the equivalence of the conditions $(\Lc,\Lin)$ and $(\w\U)$ is independent of ZFC.
\smallskip

The language of uniform spaces is  systematically used throughout the paper.
First, in Sections~\ref{s:Lc}, \ref{s:Lin} we characterize those uniform spaces $X$ whose free (locally convex) topological vector  spaces admit a $\mathfrak G$-base, and then (in Section~\ref{s:Lin+Lc}) we apply the obtained ``uniform'' results the topological case. In Sections~\ref{s:Set} and \ref{s:U} we study uniform spaces whose uniformity has a $\mathfrak G$-base. The main result here is the metrization Theorem~\ref{t:metr} which says in particular that a first-countable perfectly normal space $X$ is metrizable if and only if the topology of $X$ is generated by a uniformity possessing a $\mathfrak G$-base. In Section~\ref{s:CX} we investigate Tychonoff spaces $X$ whose function space $C(X)$ is $\w^\w$-dominated.

\section{Reductions between posets}\label{s:pre}
For handling $\mathfrak G$-bases, it is convenient to use the language of reducibility of posets. By a {\em poset} we understand a partially ordered set, i.e., a set endowed with a partial order.

A function $f:P\to Q$ between two posets is called
\begin{itemize}
\item  {\em monotone} if $f(p)\le f(p')$ for all $p\le p'$ in $P$;
\item {\em cofinal} if for each $q\in Q$ there exists $p\in P$ such that $q\le f(p)$.
\end{itemize}

Given two partially ordered sets $P,Q$ we shall write $P\succcurlyeq Q$ or $Q\preccurlyeq P$ and say that $Q$ {\em reduces} to $P$ if there exists a monotone cofinal map $f:P\to Q$. Also we write $P\cong Q$ if $P\preccurlyeq Q$ and $P\succcurlyeq Q$. This kind of reducibility of posets  is a bit stronger than the Tukey reducibility \cite{DT}, which requires the existence of a cofinal (but not necessarily monotone) map $f:P\to Q$.

We shall also use a related notion of $P$-dominance. Given two posets $P,Q$, we shall say that a subset $D\subset Q$ is {\em $P$-dominated in $Q$} if there exists a monotone map $f:P\to Q$ such that for every $x\in D$ there exists $y\in P$ with $x\le f(y)$. It follows that a poset $Q$ reduces to $P$ if and only if $Q$ is $P$-dominated in $Q$. In this case we shall also say that the poset $Q$ is {\em $P$-dominated}.

For a point $x$ of a topological space $X$ by $\Tau_x(X)$ we denote the poset of all neighborhoods of $x$ in $X$, endowed with the partial order of converse inclusion $U\le V$ iff $V\subset U$.
Observe that a topological space $X$ has a $\mathfrak G$-base at a point $x\in X$ if and only if  $\w^\w\succcurlyeq\Tau_x(X)$ where $\w^\w$ is the poset of all functions from $\w$ to $\w$, endowed with the partial order $\le$ defined by $f\le g$ iff $f(n)\le g(n)$ for all $n\in\w$.

In the sequel we equip every topological vector space $L$ with the natural uniformity generated by the base consisting of entourages $\{(x,y)\in L:x-y\in U\}$ where $U\in\Tau_0(L)$. Here  $\Tau_0(L)$ denotes the poset of all neighborhoods of zero in $L$.

For a set $X$ by $\IR^X$ we denote the set of all real-valued functions on $X$ endowed with the pointwise partial order $\le$ defined by $f\le g$ iff $f(x)\le g(x)$ for all $x\in X$. If $X$ is a topological space, then $C(X)$ stands for the poset of all continuous real-valued functions on $X$ endowed with the partial order inherited from $\IR^X$.

Given a uniform space $X$ by $\U(X)$ we denote the uniformity of $X$ endowed with the partial order of converse inclusion $U\le V$ iff $V\subset U$. For an entourage $U\in\U(X)$ and a point $x\in X$ by $U[x]$ we denote the $U$-ball $\{y\in X:(x,y)\in U\}\subset X$ centered at $x$. Each uniform space $X$ will be endowed with the (Tychonoff) topology consisting of all subsets $W\subset X$ such that for every $x\in W$ there exists an entourage $U\in\U(X)$ such that $U[x]\subset W$.

Conversely, each Tychonoff space $X$ will be equipped with the {\em  finest uniformity} $\U(X)$ generated by the base consisting of entourages $[d]_{<1}=\{(x,y):d(x,y)<1\}$ where $d$ runs over all continuous pseudometrics on $X$.

By $C_u(X)\subset\IR^X$ we denote the space of all uniformly continuous real-valued functions on the uniform space $X$. Observe that for a topological space $X$ equipped with the finest uniformity $\U(X)$  the function space $C_u(X)$ coincides with $C(X)$.

A function $f:X\to Y$ between uniform spaces is called {\em $\w$-continuous} if for any entourage $U\in\U(Y)$ there exists a countable family $\V\subset\U(X)$ such that for every $x\in X$ there exists $V\in\V$ satisfying $f(V[x])\subset U[f(x)]$.  It is clear that each $\w$-continuous function $f:X\to Y$ between uniform space is continuous with respect to the topologies generated by the uniformities.

By $C_\w(X)$ we denote the poset  of all $\w$-continuous real-valued functions on a uniform space. It follows that  $C_u(X)\subset C_\w(X)\subset C(X)\subset\IR^X$.

For a topological space $X$ let $\K(X)$ be the poset of compact subsets of $X$, endowed with the inclusion partial order $A\le B$ iff $A\subset B$.  The following fundamental fact was proved by Christensen \cite{Chris} (see also \cite[6.1]{kak}).

\begin{theorem}[Christensen]\label{t:Chris} A metrizable space $X$ is Polish iff $\w^\w\succcurlyeq \K(X)$.
\end{theorem}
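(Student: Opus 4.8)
The plan is to prove both implications by working with the family $(K_\alpha)_{\alpha\in\w^\w}$ in $\K(X)$ supplied by a monotone cofinal map $\w^\w\to\K(X)$, i.e.\ a family of compacta with $K_\alpha\subseteq K_\beta$ for $\alpha\le\beta$ and with each compact subset of $X$ contained in some $K_\alpha$.

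For the implication ($X$ Polish $\Ra$ $\w^\w\succcurlyeq\K(X)$) I would write down such a family explicitly. Fix a complete metric $d\le 1$ on $X$ and a countable dense set $\{x_n:n\in\w\}$, and put
\[
K_\alpha=\bigcap_{m\ge 1}\ \bigcup_{n\le\alpha(m)}\overline B(x_n,1/m).
\]
Each $K_\alpha$ is closed and totally bounded, hence compact by completeness; the family is monotone because enlarging $\alpha$ enlarges each finite union; and it is cofinal because any compact $K\subseteq X$ is, for every $m$, covered by finitely many balls $B(x_n,1/m)$, which determines a suitable $\alpha$ with $K\subseteq K_\alpha$. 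This part is routine.

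The substance is the converse. Assume a monotone cofinal family $(K_\alpha)_{\alpha\in\w^\w}$. First I would establish separability. Let $D\subseteq X$ be closed and discrete; for each $x\in D$ choose, by cofinality, $\alpha_x\in\w^\w$ with $x\in K_{\alpha_x}$. Monotonicity gives $\{x\in D:\alpha_x\le\beta\}\subseteq K_\beta\cap D$ for every $\beta\in\w^\w$, and $K_\beta\cap D$ is finite since $K_\beta$ is compact and $D$ is closed discrete. Thus every $\beta$ coordinatewise dominates only finitely many of the $\alpha_x$; in particular each value is attained finitely often, so if $D$ were uncountable then $S=\{\alpha_x:x\in D\}$ would be an uncountable subset of the second countable space $\w^\w$. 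Such an $S$ has a nontrivial convergent sequence of distinct points, which is compact and therefore lies in a single coordinatewise box $\prod_m[0,\beta(m)]$; that box then dominates infinitely many $\alpha_x$, a contradiction. Hence every closed discrete subset of $X$ is countable, and a metrizable space of countable extent is separable.

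It remains to prove completeness, which is the main obstacle. Having separability, I would embed $X$ in its completion $\widehat X$, a Polish space, and aim to show that $X$ is $G_\delta$ (equivalently \v{C}ech-complete, equivalently completely metrizable) in $\widehat X$. The naive attempts fail precisely because $X$ need not be $\sigma$-compact (e.g.\ $X=\w^\w$): no countable subfamily of $(K_\alpha)$ covers $X$, so $X$ is not a countable union of the compacta, and, $X$ being dense and possibly nowhere locally compact, no $K_\alpha$ has interior, so neighbourhoods cannot be trapped inside a compactum. The $\w^\w$-indexing must therefore be used structurally. The route I would take is to convert the swallowing resolution into a complete Souslin scheme: using a totally bounded metric, refine $(K_\alpha)$ to compacta $(C_s)_{s\in\w^{<\w}}$ with $C_s\subseteq C_{s^\frown i}$ and $\diam C_s\to 0$ along every branch, arranged so that for each $\alpha\in\w^\w$ the intersection $\bigcap_n C_{\alpha|n}$ is a point of $X$, every point of $X$ is so obtained (this is where cofinality/swallowing enters), and — crucially — the intersection along a branch is nonempty and the induced map is proper. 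Then $P=\{\alpha:\bigcap_n C_{\alpha|n}\neq\varnothing\}$ is closed in $\w^\w$, hence Polish, and $\alpha\mapsto\bigcap_n C_{\alpha|n}$ is a continuous perfect surjection $P\to X$; as a perfect image of a Polish space, $X$ is Polish. The delicate point, and the technical heart of Christensen's theorem, is exactly the arrangement making the branch-intersections nonempty and the map perfect: this is where the compactness of the $K_\alpha$ (not merely their closedness) is indispensable. An alternative is to derive from the resolution that $X$ is analytic and invoke the Hurewicz dichotomy, reducing non-completeness to a closed copy of $\IQ$ carrying a swallowing compact resolution, which one excludes by a direct fusion argument; but the Souslin-scheme approach keeps the use of the $\w^\w$-structure explicit.
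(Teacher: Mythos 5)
You should know that the paper itself does not prove this statement: it quotes Christensen's theorem from \cite{Chris} (see also \cite[Theorem 6.1]{kak}), so your proposal must stand on its own. Its first two components do stand. The explicit family $K_\alpha=\bigcap_{m\ge 1}\bigcup_{n\le\alpha(m)}\overline B(x_n,1/m)$ is a correct witness for the forward implication (closed, totally bounded, hence compact; monotone; cofinal by finite subcovers). Your separability argument is also complete and correct: each $\beta$ dominates only finitely many $\alpha_x$ because $K_\beta\cap D$ is compact and closed discrete, hence finite, while an uncountable subset of the Polish space $\w^\w$ has a condensation point and hence an injective convergent sequence, whose union with its limit is compact and therefore coordinatewise bounded. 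You were also right to settle separability first, since $X$ is only assumed metrizable and Polish includes separable.

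The completeness half, however, is a plan rather than a proof, and that is exactly where the theorem lives. First, the scheme you describe is internally inconsistent as written: if $C_s\subseteq C_{s^\frown i}$, then diameters can only increase along branches, so they cannot tend to $0$; you presumably mean $C_{s^\frown i}\subseteq C_s$, but then you owe the construction of such a decreasing scheme from an increasing resolution. Second, everything that makes the argument work enters through the phrase ``arranged so that'': nonemptiness of the branch intersections, the fact that they capture exactly $X$, and properness of the induced map $P\to X$. These are not side conditions --- they are the entire content of the hard direction. Without properness the scheme certifies only that $X$ is analytic, and $\IQ$ --- analytic, non-Polish, and carrying a compact resolution (just not a swallowing one) --- shows the monotone covering structure alone cannot suffice; your sketch never indicates how the swallowing property, as opposed to mere resolution, enters the fusion. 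For comparison, the standard proof uses the resolution through the closed sets $A_s=\overline{\bigcup\{K_\alpha:\alpha|_{|s|}\le s\}}$ in the completion and the domination trick (points $x_n\to x$ with $x_n\in K_{\alpha_n}$, $\alpha_n|_n\le\beta|_n$, are dominated by a single $\gamma$, so $x\in K_\gamma\subseteq X$), and even this directly yields only analyticity; upgrading to a $G_\delta$ is the delicate step. Your fallback route has the same status: the Hurewicz $G_\delta$-dichotomy for analytic sets is itself a substantial theorem, and the key exclusion --- that no closed copy of $\IQ$ admits a swallowing compact resolution --- is again waved at (``a direct fusion argument'') rather than given; note that a direct proof is mandatory here, since deducing it from Christensen's theorem would be circular. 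So: forward direction and separability, yes; completeness, a correctly identified but unfilled gap.
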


We shall apply this Christensen's theorem to prove the following result generalizing Arhangel'skii--Calbrix Theorem \cite{AC} (see also \cite[T9.9]{kak}). A topological space $X$ is called {\em $\sigma$-compact} if $X$ is a countable union of its compact subspaces.

\begin{theorem}\label{t:dominat} If for a uniform space $X$ the set $C_\w(X)$ is $\w^\w$-dominated in $\IR^X$, then for every $\w$-continuous map $f:X\to M$ to a metric space $M$ the image $f(X)$ is $\sigma$-compact.
\end{theorem}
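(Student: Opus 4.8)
The plan is to reduce the assertion to a statement about compact subsets of a Polish space and then invoke Christensen's Theorem~\ref{t:Chris}. First I would replace $M$ by the completion of $\overline{f(X)}$; since compactness is absolute, $f(X)$ is $\sigma$-compact in $M$ iff it is so in this completion, so I may assume that $M$ is a complete metric space and that $Y:=f(X)$ is dense in $M$. The one general principle I would isolate at the outset is that composing $f$ with a Lipschitz (more generally, uniformly continuous) function $h:M\to\IR$ yields an $\w$-continuous function $h\circ f\in C_\w(X)$: given $\e$, a modulus $\delta$ for $h$ together with the countable family witnessing $\w$-continuity of $f$ at scale $\delta$ does the job. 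Hence every such $h\circ f$ is dominated by some member of the fixed monotone family $g:\w^\w\to\IR^X$ witnessing that $C_\w(X)$ is $\w^\w$-dominated.

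Second, I would manufacture a compact resolution of $M$. Assuming for the moment that $Y$ is separable, fix a countable dense set $\{y_j\}_{j\in\w}\subseteq Y$ and, for $\alpha\in\w^\w$, set $C_\alpha:=\{z\in M:\forall n\ \exists j\le\alpha(n)\ d(z,y_j)\le 2^{-n}\}$. Each $C_\alpha$ is closed and totally bounded, hence compact because $M$ is complete; the family is increasing in $\alpha$ and swallows every compact subset of $M$ (a compact set has, for each $n$, a finite $2^{-n}$-net drawn from the dense set). Thus $\w^\w\succcurlyeq\K(M)$, and Christensen's Theorem~\ref{t:Chris} shows $M$ is Polish. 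Writing $\psi(y)(n):=\min\{j:d(y,y_j)\le 2^{-n}\}$ we have $\psi(y)\le\alpha\Rightarrow y\in C_\alpha$, so the position of $y$ in this resolution is governed by the ``complexity'' $\psi(y)\in\w^\w$.

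The heart of the proof is to show that $\w^\w$-domination of $C_\w(X)$ forces $\psi[Y]\subseteq\w^\w$ to be $\sigma$-bounded, i.e. contained in $\bigcup_k\{\gamma:\gamma\le\alpha_k\}$ for some countable $\{\alpha_k\}$. The mechanism is to encode the values of $\psi$ into $\w$-continuous functions on $X$: for each $n$ the scalar datum ``how large an index is needed to $2^{-n}$-approximate $f(x)$'' must be majorised by functions of the form $h\circ f$, which are then dominated by the $g_\alpha$. Once $\psi[Y]$ is $\sigma$-bounded, $Y\subseteq\bigcup_k C_{\alpha_k}$, a $\sigma$-compact subset of $M$, and a pigeonhole argument on limits shows that each $Y_k:=\{y\in Y:\psi(y)\le\alpha_k\}=Y\cap C_{\alpha_k}$ has all of its $M$-limit points of complexity $\le\alpha_k$ as well, so that $\overline{Y_k}^{\,M}\subseteq Y$ and $Y=\bigcup_k Y_k$ exhibits $Y$ as a countable union of compacta.

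The main obstacle is precisely this encoding step, and with it the implicit separability of $Y$. The difficulty is a mismatch of variables: domination produces, for each $\w$-continuous $h$, a majorant $g_\alpha\in\IR^X$ depending on the point $x\in X$, whereas the complexity bound I need depends only on the image point $y=f(x)$ and must hold uniformly over the fibre $f^{-1}(y)$. Overcoming it is exactly where the two hypotheses must be used together: the functions witnessing the domination live on $X$, while the $\w$-continuity of $f$ lets one transfer the relevant sublevel information from $X$ down to $M$ in a controlled, countably indexed way. I expect that ruling out the non-separable case and certifying that the limit points of each $Y_k$ stay inside the image $f(X)$ --- rather than merely inside the completion $M$ --- will require the most care, and this is where the strength of $\w^\w$-domination, as opposed to a mere compact resolution (which by Christensen only yields Polishness), becomes indispensable.
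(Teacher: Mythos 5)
There is a genuine gap, and it is located exactly where you placed your bets: your final mechanism cannot work even in principle. Showing that $\psi[Y]$ is $\sigma$-bounded in $\w^\w$ only places $Y$ inside a $\sigma$-compact subset of the completion, and that is vacuous information: if $M$ is totally bounded (the essential case), a \emph{single} $\alpha$ already gives $C_\alpha=\bar M\supseteq Y$, since finitely many $2^{-n}$-balls around the dense set cover everything. Your repair --- that all $M$-limit points of $Y_k=Y\cap C_{\alpha_k}$ have complexity $\le\alpha_k$, hence $\overline{Y_k}\subseteq Y$ --- is a non sequitur: a complexity bound places a limit point in $C_{\alpha_k}\subseteq M$, not in $Y$. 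Concretely, take $Y$ the irrationals in $[0,1]$ with $\{y_j\}$ an enumeration of the rationals: $\psi[Y]$ is bounded by one $\alpha$, yet $Y$ is not $\sigma$-compact and $\overline{Y}\not\subseteq Y$. So no argument of the form ``trap $Y$ in a compact resolution of $M$'' can distinguish $\sigma$-compact images from non-$\sigma$-compact ones. The paper's proof aims Christensen's theorem at the opposite target: the \emph{complement} $\bar M\setminus M$ in the compact completion $\bar M$ of a totally bounded $M$. For a dominating family $\{\varphi_\alpha\}_{\alpha\in\w^\w}$ one sets $K_\alpha=\bar M\setminus\bigcup_{x\in X}B\bigl(f(x),1/\varphi_\alpha(x)\bigr)$; given a compact $K\subseteq\bar M\setminus M$, the function $\varphi(x)=1/d(f(x),K)$ is $\w$-continuous (this is where the $\w$-continuity of $f$ enters, and $d(f(x),K)>0$ since $K$ misses $M$), so some $\varphi_\alpha\ge\varphi$ and then $K\subseteq K_\alpha$. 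Thus $\w^\w\succcurlyeq\K(\bar M\setminus M)$, Christensen makes $\bar M\setminus M$ Polish, hence $G_\delta$ in $\bar M$, and $M$ is $F_\sigma$ in the compact space $\bar M$ --- i.e.\ $\sigma$-compact. Note this also dissolves your ``encoding'' obstacle: the only functions that need to be dominated are the genuinely $\w$-continuous $1/d(f(\cdot),K)$, not the discontinuous complexity functionals $\psi(f(\cdot))(n)$, whose domination you never establish and which need not lie under any $h\circ f$.

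The second gap is the non-separable case, which you flag but leave open. The paper reduces the general case to separability by contradiction: if $M=f(X)$ is not separable it contains a closed discrete $D$ with $|D|=\w_1$; by a result of Banakh and Plichko, $\ell_2$ contains an uncountable linearly independent compact set $K$, inside which one picks $D'$ of size $\w_1$ that is not $\sigma$-compact; a surjection $D\to D'$ extends by Dugundji's theorem to a continuous $\bar g:M\to L$ into the (separable) linear hull $L$ of $D'$, where $D'=L\cap K$ is closed. Then $\bar g\circ f$ is $\w$-continuous with separable metrizable range, so the already-proved case forces its image to be $\sigma$-compact, and so is the closed subset $D'$ --- a contradiction. (The intermediate reduction from separable to totally bounded, via a homeomorphism of $M$ onto a totally bounded metric space, matches the first move of your outline and is fine.) In short: your first structural step and the use of Christensen are in the right spirit, but Christensen must be applied to $\K(\bar M\setminus M)$ rather than to $\K(M)$, and the non-separable case needs the linear-independence/Dugundji device or some substitute you have not supplied.
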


\begin{proof} We lose no generality assuming that $M = f(X)$. First we consider the case of a totally bounded metric space $M$. In this case the completion $\bar M$ of $M$ is compact. Denote by $d$ the metric of the compact metric space $\bar M$. Let $\{\varphi_\alpha\}_{\alpha\in\w^\w}\subset \IR^X$ be a subset witnessing that the set $C_\w(X)$ is $\w^\w$-dominated in $\IR^X$.   Replacing each function $\varphi_\alpha$ by $\max\{1,\varphi_\alpha\}$, we can assume that $\varphi_\alpha(X)\subset[1,\infty)$.

For every $\alpha\in\w^\w$ consider the open set $U_\alpha=\bigcup_{x\in X}B(f(x),1/\varphi_\alpha(x))$ and the compact set
 $K_\alpha=\bar M\setminus U_\alpha$ in $\bar M$. Here by $B(x,\e)=\{y\in\bar M:d(x,y)<\e\}$ we denote the open $\e$-ball centered at a point $x$ of the metric space $\bar M$. Observe that for any $\alpha\le\beta$ in $\w^\w$ the inequality $\varphi_\alpha\le \varphi_\beta$ implies the inclusion $K_\alpha\subset K_\beta$. We claim that the family $(K_\alpha)_{\alpha\in\w^\w}$ is cofinal in $\K(\bar M\setminus M)$. Given any compact set $K\subset\bar M\setminus M$, consider the $\w$-continuous function $\varphi:X\to\IR$ defined by
 $\varphi(x)=1/d(f(x),K)$ where $d(f(x),K)=\min_{y\in K}d(f(x),y)$. Find $\alpha\in\w^\w$ such that $\varphi_\alpha\ge\varphi$ and observe that $K\subset K_\alpha$, witnessing that $\w^\w\succcurlyeq \K(\bar M\setminus M)$. By Theorem~\ref{t:Chris},
 the space $\bar M\setminus M$ is Polish and hence of type $G_\delta$ in $\bar M$. Then $M$ is $\sigma$-compact, being of type $F_\sigma$ in the compact space $\bar M$.

Next, we consider the case of separable metric space $M = f(X)$. In this case we can take any homeomorphism $h:M\to T$ onto a totally bounded metric space $T$ and observe that the $\w$-continuity of the map $f:X\to M$ implies the $\w$-continuity of the composition $h\circ f:X\to T$. By the preceding case, the image $h\circ f(X)$ is $\sigma$-compact. Since $h$ is a homeomorphism, the space $f(X) = M$ is
 $\sigma$-compact too.

Finally, we can prove the general case of arbitrary metric space $M$.
By the preceding case, it is sufficient to check that the image $f(X) = M$ is separable. Assuming that $M = f(X)$ is non-separable, we can find a closed discrete subspace $D\subset M$ of cardinality $|D|=\w_1$. By \cite[Corollary 1]{BP}, the separable Hilbert space $\ell_2$ contains an uncountable linearly independent compact set $K\subset\ell_2$. Fix a subset $D'\subset K$  of cardinality $|D'|=\w_1$ which is not $\sigma$-compact. Next, consider the linear hull $L$ of the set $D'$ in $\ell^2$ and observe that $D' = L\cap K$, so $D'$ is a closed subset of $L$. Take any surjective map $g:D\to D'$. By Dugundji Theorem \cite{Dug}, the map $g:D\to D'\subset L$ has a continuous extension $\bar g:M\to L$. Then the map $\bar g\circ f:X\to L$ is $\w$-continuous. Since the space $L$ is separable, the preceding case guarantees that the image $\bar g\circ f(X)$ is $\sigma$-compact and so is its closed subspace $D'=\bar g\circ f(X)\cap K$. But this contradicts the choice of $D'$.
\end{proof}

A uniform space $X$ is called {\em $\w$-narrow} if for every entourage $U\in\U(X)$ there exists a countable set $C\subset X$ such that $X=\bigcup_{x\in C}U[x]$.  It is well-known that a uniform space $X$ is $\omega$-narrow if and only if for every uniformly continuous map $f:X\to M$ to a metric space $M$ the image $f(X)$ is separable. This characterization and Theorem~\ref{t:dominat} imply:

\begin{corollary}\label{c:narrow} A uniform space $X$ is $\w$-narrow if the set $C_\w(X)$ is $\w^\w$-dominated in $\IR^X$.
\end{corollary}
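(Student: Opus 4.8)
The plan is to combine Theorem~\ref{t:dominat} with the characterization of $\w$-narrowness recalled just above the statement, namely that a uniform space is $\w$-narrow if and only if every uniformly continuous map into a metric space has separable image. Assuming that $C_\w(X)$ is $\w^\w$-dominated in $\IR^X$, I would fix an arbitrary uniformly continuous map $f:X\to M$ into a metric space $M$ and show that $f(X)$ is separable; by the characterization this suffices.

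First I would observe that every uniformly continuous map $f:X\to M$ is in fact $\w$-continuous. Indeed, given an entourage $U\in\U(M)$, uniform continuity supplies a \emph{single} entourage $V\in\U(X)$ with $f(V[x])\subset U[f(x)]$ for all $x\in X$, so the singleton (hence countable) family $\V=\{V\}\subset\U(X)$ witnesses the defining condition of $\w$-continuity. This is just the observation that uniform continuity is the special case of $\w$-continuity in which one may choose the countable family to be a single entourage independent of the point $x$.

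With $f$ now known to be $\w$-continuous, Theorem~\ref{t:dominat} applies directly and yields that the image $f(X)$ is $\sigma$-compact. Since a $\sigma$-compact metric space is a countable union of compact subspaces, each of which is separable, the image $f(X)$ is separable. As $f$ was an arbitrary uniformly continuous map into a metric space, the quoted characterization of $\w$-narrowness gives that $X$ is $\w$-narrow, which is exactly the desired conclusion.

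I do not expect any genuine obstacle here, since all the real work has already been done in Theorem~\ref{t:dominat}. The two remaining ingredients — that uniform continuity implies $\w$-continuity, and that a $\sigma$-compact metric space is separable — are entirely routine, so the corollary is a short deduction rather than an independent argument.
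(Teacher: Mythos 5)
Your proof is correct and follows exactly the route the paper intends: the paper derives Corollary~\ref{c:narrow} by combining the stated characterization of $\w$-narrowness (separability of uniformly continuous metric images) with Theorem~\ref{t:dominat}, noting implicitly the two routine facts you spell out, namely that uniformly continuous maps are $\w$-continuous and that $\sigma$-compact metric spaces are separable. There is nothing to add; your write-up merely makes the paper's one-line deduction explicit.
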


\section{Some consistency results}\label{s:Set}

In this section we establish some properties of $\w$-narrow uniform spaces $X$ with a $\mathfrak G$-base, which hold under the set-theoretic assumption $\w_1<\mathfrak b$.

Following \cite{Tka}, we say that a subset $S$ of a topological space $X$ is {\em weakly separated} if every point $x\in S$ has a neighborhood $O_x\subset X$ such that for any distinct points $x,y\in S$ either $x\notin O_y$ or $y\notin O_x$.

A topological space $X$ is defined to be
\begin{itemize}
\item {\em cosmic} if $X$ has a countable network of the topology;
\item {\em weakly cosmic} if $X$ contains no uncountable weakly separated subspace.
\end{itemize}
By \cite{Tka}, each cosmic space is weakly cosmic and each weakly cosmic space is hereditarily separable and hereditarily Lindel\"of. By \cite[p.30]{Todo} under PFA (the Proper Forcing Axiom), a regular space is cosmic if and only if all its finite powers are weakly cosmic.

\begin{lemma}\label{l:ws} Assume that $\w_1<\mathfrak b$. If the topology of a Tychonoff space $X$ is generated by an $\w$-narrow uniformity $\U$ with a $\mathfrak G$-base, then $X$ is weakly cosmic.
\end{lemma}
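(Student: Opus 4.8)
The plan is to argue by contradiction. Suppose $X$ is not weakly cosmic and fix an uncountable weakly separated subspace $S\subset X$; passing to a subset, I may assume $|S|=\w_1$. For each $x\in S$ let $O_x$ be the open neighborhood witnessing weak separation, so that for distinct $x,y\in S$ either $x\notin O_y$ or $y\notin O_x$. Let $\{U_\alpha\}_{\alpha\in\w^\w}$ be a $\mathfrak G$-base of $\U$. Since $\U$ generates the topology of $X$, for every $x\in S$ the balls $U_\alpha[x]$ form a neighborhood base at $x$, so I may choose $\alpha_x\in\w^\w$ with $U_{\alpha_x}[x]\subset O_x$.

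The crucial reduction is to replace the individual entourages $U_{\alpha_x}$ by a \emph{single} entourage on an uncountable piece of $S$; this is exactly where the hypothesis $\w_1<\mathfrak b$ enters. Since $|S|=\w_1<\mathfrak b$, by the very definition of $\mathfrak b$ the family $\{\alpha_x\}_{x\in S}$ is dominated by some countable set $\{\beta_n\}_{n\in\w}\subset\w^\w$, meaning that each $\alpha_x$ satisfies $\alpha_x\le\beta_{n(x)}$ for some $n(x)\in\w$. As $S$ is uncountable and $\w$ is countable, a pigeonhole argument yields an uncountable set $S'\subset S$ and a single index $n$ with $\alpha_x\le\beta_n$ for all $x\in S'$. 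Writing $V:=U_{\beta_n}$ and using the monotonicity of the $\mathfrak G$-base (so that $\alpha_x\le\beta_n$ gives $V\subset U_{\alpha_x}$), I obtain $V[x]\subset U_{\alpha_x}[x]\subset O_x$ for every $x\in S'$.

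It then remains to exploit $\w$-narrowness. I would choose a symmetric entourage $W\in\U$ with $W\circ W\subset V$. If a single ball $W[c]$ contained two distinct points $x,y\in S'$, then symmetry of $W$ together with $W\circ W\subset V$ would give $(x,y),(y,x)\in V$, hence $y\in V[x]\subset O_x$ and $x\in V[y]\subset O_y$, contradicting the weak separation of $S'$. Thus each $W$-ball meets $S'$ in at most one point. By $\w$-narrowness of $\U$ there is a countable set $C\subset X$ with $X=\bigcup_{c\in C}W[c]$, whence $S'=\bigcup_{c\in C}(S'\cap W[c])$ is countable, contradicting the uncountability of $S'$. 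This contradiction shows that $X$ has no uncountable weakly separated subspace, i.e.\ that $X$ is weakly cosmic.

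I expect the combinatorial reduction in the second paragraph --- converting the pointwise choices $x\mapsto\alpha_x$ into a single dominating $\beta_n$ on an uncountable subset --- to be the heart of the argument and the only place where $\w_1<\mathfrak b$ is genuinely used; the remaining uniform-space reasoning, amounting to the standard fact that a $W$-uniformly discrete set is countable in an $\w$-narrow uniform space, is routine.
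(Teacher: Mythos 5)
Your proof is correct and follows essentially the same route as the paper's: an uncountable weakly separated set $S$ of size $\w_1$, domination of the chosen indices by a countable set via $\w_1<\mathfrak b$, a pigeonhole reduction to a single entourage, and a contradiction with $\w$-narrowness through uniform discreteness of the resulting subset. The only difference is cosmetic: the paper refines entourages pointwise (choosing $W_x,V_x$ with $V_x^{-1}V_x\subset W_x$ before dominating), whereas you refine once globally (a symmetric $W$ with $W\circ W\subset V$) after the pigeonhole step, which slightly streamlines the same argument.
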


\begin{proof}  Let $(U_\alpha)_{\alpha\in\w^\w}$ be a $\mathfrak G$-base of the uniformity $\U$. Assuming that $X$ is not weakly cosmic, we can find a weakly separated subset $S\subset X$ of cardinality $|S|=\w_1$. For every $x\in S$ choose a neighborhood $O_x\subset X$ such that for any distinct points $x,y\in S$ either $x\notin O_y$ or $y\notin O_x$.

For every $x\in S$ find an entourages $W_x,V_x\in\U$ such that $W_x[x]\subset O_x$ and $V_x^{-1}V_x\subset W_x$. For the entourage $V_x$ choose a function $\alpha(x)\in\w^\w$ such that $U_{\alpha(x)}\subset V_x$. The correspondence $x\mapsto\alpha(x)$ determines a function $\alpha:S\to\w^\w$. By definition of the cardinal $\mathfrak b>\w_1=|\alpha(S)|$, the set $\alpha(S)$ is dominated by some countable set $B\subset\w^\w$. By the Pigeonhole Principle, there exists a function $\beta\in B$ such that the set $S_\beta=\{x\in S:\alpha(x)\le \beta\}$ is uncountable.

 We claim that for any distinct points $x,y\in S_\beta$ we get $U_\beta[x]\cap U_\beta[y]=\emptyset$. Indeed, the choice of the neighborhoods $O_x,O_y$ guarantees that either $x\notin O_y$ or $y\notin O_x$. In the first case we observe that $U_\beta^{-1}U_\beta[y]\subset U_{\alpha(y)}^{-1}U_{\alpha(y)}[y]\subset V_y^{-1}V_y[y]\subset W_y[y]\subset O_y$ and hence $x\notin U_\beta^{-1}U_\beta[y]$, which implies $U_\beta[x]\cap U_\beta[y]=\emptyset$. In the second case the equality $U_\beta[x]\cap U_\beta[y]=\emptyset$ follows from the inclusions $U_\beta^{-1}U_\beta[x]\subset U_{\alpha(x)}^{-1}U_{\alpha(x)}[x]\subset V_x^{-1}V_x[x]\subset W_n[x]\subset O_x$. Now we see that the uniform space $X$ contains an uncountable disjoint family $\{U_\beta[x]:x\in S_\beta\}$ of $U_\beta$-balls, and hence $X$ cannot be $\w$-narrow.
This contradiction completes the proof.
\end{proof}

Lemma~\ref{l:ws} admits the following its improvement.

\begin{theorem}\label{t:ws} Assume that $\w_1<\mathfrak b$. If the topology of a Tychonoff space $X$ is generated by an $\w$-narrow uniformity $\U$ with a $\mathfrak G$-base, then the countable power $X^\w$ of $X$ is weakly cosmic.
\end{theorem}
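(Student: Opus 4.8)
The plan is to derive Theorem~\ref{t:ws} from Lemma~\ref{l:ws} by applying the latter not to $X$ but to the countable power $X^\w$, equipped with the product uniformity $\U^\w$. The space $X^\w$ is Tychonoff (being a product of Tychonoff spaces), and its product topology is exactly the topology generated by $\U^\w$. Since we assume $\w_1<\mathfrak b$, Lemma~\ref{l:ws} will give the desired conclusion once we verify that $\U^\w$ is $\w$-narrow and possesses a $\mathfrak G$-base. Thus the whole proof reduces to these two stability statements for countable products, after which the theorem follows immediately by quoting Lemma~\ref{l:ws} with $X^\w$ in place of $X$.

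To handle $\w$-narrowness I would work with the canonical base of $\U^\w$ consisting of the finite-support entourages
\[
W(n;U_0,\dots,U_{n-1})=\{(x,y)\in X^\w\times X^\w:(x_i,y_i)\in U_i\ \text{for all}\ i<n\},\qquad n\in\w,\ U_i\in\U,
\]
noting that it suffices to cover each such base entourage by countably many balls. Using that $\U$ is $\w$-narrow, for every $i<n$ I would pick a countable set $C_i\subset X$ with $X=\bigcup_{c\in C_i}U_i[c]$. Fixing a base point $*\in X$ and letting $D$ be the countable set of $d\in X^\w$ with $d_i\in C_i$ for $i<n$ and $d_i=*$ for $i\ge n$, a direct check (choosing, for a given $x$, coordinates $d_i\in C_i$ with $(d_i,x_i)\in U_i$) shows $X^\w=\bigcup_{d\in D}W(n;U_0,\dots,U_{n-1})[d]$, so $\U^\w$ is $\w$-narrow.

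For the $\mathfrak G$-base I would exhibit a monotone cofinal map $\Phi:\w^\w\to\U^\w$, i.e.\ show $\w^\w\succcurlyeq\U^\w$. Fix a $\mathfrak G$-base $(U_\alpha)_{\alpha\in\w^\w}$ of $\U$, so $U_\beta\subset U_\alpha$ whenever $\alpha\le\beta$. Using the poset isomorphisms $\w^\w\cong\w\times\w^\w$ and $(\w^\w)^\w\cong\w^\w$ (the latter induced by a bijection $\w\cong\w\times\w$), fix an isomorphism $e:\w^\w\to\w\times(\w^\w)^\w$ and write $e(\gamma)=(n_\gamma,(\gamma_i)_{i\in\w})$. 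Define $\Phi(\gamma)=W(n_\gamma;U_{\gamma_0},\dots,U_{\gamma_{n_\gamma-1}})$. If $\gamma\le\gamma'$, then $n_\gamma\le n_{\gamma'}$ and $\gamma_i\le\gamma'_i$ for all $i$, hence $U_{\gamma'_i}\subset U_{\gamma_i}$ on the coordinates $i<n_\gamma$, while coordinates $n_\gamma\le i<n_{\gamma'}$ only add constraints; therefore $\Phi(\gamma')\subset\Phi(\gamma)$, which is monotonicity in the reverse-inclusion order of $\U^\w$. For cofinality, given a base entourage $W(m;V_0,\dots,V_{m-1})$ I would use the $\mathfrak G$-base to pick $\beta_j$ with $U_{\beta_j}\subset V_j$ for $j<m$, then choose $\gamma$ with $n_\gamma\ge m$ and $\gamma_j\ge\beta_j$ for $j<m$ (possible since $e$ is onto), giving $\Phi(\gamma)\subset W(m;V_0,\dots,V_{m-1})$. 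Thus $\U^\w$ has a $\mathfrak G$-base.

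The conceptual step that makes everything work is recognizing that weak cosmicity of $X^\w$ should be obtained by checking the hypotheses of Lemma~\ref{l:ws} for the product uniformity, rather than by attacking $X^\w$ directly. The only genuinely delicate point, which I expect to be the main technical obstacle, is the construction of a $\mathfrak G$-base for $\U^\w$: because product entourages have finite support, one must encode both a ``depth'' $n$ and the coordinatewise data $(\gamma_i)_i$ into a single index $\gamma\in\w^\w$, and this is precisely what the poset isomorphism $\w^\w\cong\w\times(\w^\w)^\w$ supplies. With $\w$-narrowness and the $\mathfrak G$-base of $\U^\w$ established, Lemma~\ref{l:ws} applies to $X^\w$ and yields that $X^\w$ is weakly cosmic.
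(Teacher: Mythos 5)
Your proposal is correct and follows essentially the same route as the paper: the paper also applies Lemma~\ref{l:ws} to $X^\w$ equipped with the product uniformity, obtaining its $\mathfrak G$-base from the monotone cofinal map $P\mapsto V_P$ on $\w\times\U^\w$ together with $\w^\w\cong\w\times(\w^\w)^\w\succcurlyeq\w\times\U^\w$, which is exactly your map $\Phi$ made explicit. The only difference is that you verify the $\w$-narrowness of the product uniformity in detail, a routine step the paper leaves implicit when invoking Lemma~\ref{l:ws}.
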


\begin{proof} For every pair $P=(n,(U_n)_{n\in\w})\in\w\times\U^\w$ consider the entourage
$$V_P=\{((x_k)_{n\in\w},(y_k)_{n\in\w})\in X^\w\times X^\w:\forall k\le n\;\; (x_k,y_k)\in U_k\}\subset X^\w\times X^\w.$$
It is easy to see that the topology of the space $X^\w$ is generated by the uniformity $\V$,  generated by the base $\{V_P:P\in\w\times\U^\w\}$. The monotone cofinal map $\w\times\U^\w\to\V$, $P\mapsto V_P$, witnesses that $\w\times\U^\w\succcurlyeq \V$. Taking into account that the uniformity $\U$ has a $\mathfrak G$-base, we conclude that $\w^\w\cong\w\times\U^\w\succcurlyeq\V,$ which means that the uniformity $\V$ has a $\mathfrak G$-base.
Applying Lemma~\ref{l:ws}, we conclude that the space $X^\w$ is weakly cosmic.
\end{proof}

In the following corollary of Theorem~\ref{t:ws} we use PFA, the Proper Forcing Axiom. For a reader without set-theoretic inclinations it suffices to know that PFA is consistent with ZFC and has many nice implications (beyond Set Theory), see \cite{Baum} or \cite{Moore}.

\begin{corollary}\label{c:PFA} Assume PFA. If the topology of a Tychonoff space $X$ is generated by an $\w$-narrow uniformity $\U$ with a $\mathfrak G$-base, then the space $X$ is cosmic.
\end{corollary}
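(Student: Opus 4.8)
The plan is to derive this corollary by assembling Theorem~\ref{t:ws} with the cited result of Todorcevic \cite[p.30]{Todo} characterizing cosmic spaces under PFA, so that the bulk of the work is already done. The scheme is: pass from PFA to the hypothesis $\w_1<\mathfrak b$, apply Theorem~\ref{t:ws} to obtain weak cosmicity of the countable power, descend to all finite powers, and then invoke the Todorcevic characterization.

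First I would recall that PFA implies MA$+\neg$CH, which in turn gives $\w_1<\mathfrak b$ (as already noted in the introduction), so the set-theoretic hypothesis of Theorem~\ref{t:ws} is satisfied. Applying Theorem~\ref{t:ws} to the $\w$-narrow uniformity $\U$ with a $\mathfrak G$-base generating the topology of $X$, I obtain that the countable power $X^\w$ is weakly cosmic.

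Next I would pass from the countable power to every finite power. The property of being weakly cosmic---containing no uncountable weakly separated subspace---is plainly hereditary: a weakly separated subset of a subspace is automatically a weakly separated subset of the ambient space, so if the ambient space has none of uncountable size, neither does the subspace. For each $n\in\w$ the finite power $X^n$ is homeomorphic to a closed subspace of $X^\w$, obtained by fixing the coordinates with indices $\ge n$ at some chosen basepoint of $X$. Consequently every finite power $X^n$ is weakly cosmic.

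Finally, since $X$ is Tychonoff and hence regular, and all of its finite powers are weakly cosmic, the cited theorem of Todorcevic \cite[p.30]{Todo}, valid under PFA, yields that $X$ is cosmic, completing the argument. The entire proof is a short combination of results already at hand; the only points requiring care are the implication PFA$\Rightarrow\w_1<\mathfrak b$ and the observation that weak cosmicity descends from the countable power to its finite-power subspaces, neither of which presents a genuine obstacle.
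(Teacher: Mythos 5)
Your proposal is correct and follows essentially the same route as the paper's own proof: deduce $\w_1<\mathfrak b$ from PFA, apply Theorem~\ref{t:ws} to get that $X^\w$ is weakly cosmic, and invoke the Todorcevic characterization from \cite[p.30]{Todo}. The only differences are cosmetic --- the paper cites \cite[\S7]{Moore} directly for PFA $\Rightarrow\w_1<\mathfrak b=\w_2$ rather than passing through MA$+\neg$CH, and it leaves implicit the (correct) observation you spell out, that weak cosmicity passes from $X^\w$ to the finite powers since they embed as subspaces and the property is hereditary.
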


\begin{proof} It is well-known (see e.g. \cite[\S7]{Moore}) that PFA implies $\w_1<\mathfrak b=\w_2$. So, we can apply Theorem~\ref{t:ws} and conclude that $X^\w$ is weakly cosmic. By \cite[p.30]{Todo}, the space $X$ is cosmic (since all its finite powers are weakly cosmic).
\end{proof}

The following example shows that preceding results proved under $\w_1<\mathfrak b$ are not true in ZFC. We recall that a topological space $X$ is a {\em $P$-space} if each $G_\delta$-set in $X$ is open. A topological space $X$ has {\em countable extent} if $X$ contains no uncountable closed discrete subspaces.

\begin{example}\label{ex:PU} There exists a non-separable Tychonoff $P$-space $X$ with countable extent whose finest uniformity $\U(X)$ is $\w$-narrow and under $\w_1=\mathfrak b$ has a $\mathfrak G$-base.
\end{example}

\begin{proof}The space $X$ is the well-known example of the uncountable $\sigma$-product of doubletons, endowed with the $\w$-box topology. This space was introduced by Comfort and Ross in \cite{CR66} and also discussed in the book \cite[4.4.11]{AT}. Let us briefly remind the construction of $X$.

 In the uncountable power $2^{\w_1}$ of the doubleton $2=\{0,1\}$ consider the subset $X$ consisting of functions $x:\w_1\to 2$ with finite support $\supp(x)=x^{-1}(1)$. On the space $X$ consider the uniformity $\U$, generated by the base $\{U_\alpha\}_{\alpha\in\w_1}$ consisting of the entourages $$U_\alpha=\{(x,y)\in X\times X:x|\alpha=y|\alpha\}\mbox{ for $\alpha\in\w_1$}.$$ It is easy to see that the topology on $X$, generated by the uniformity $\U$,  turns $X$ into a non-separable $P$-space.

Next, we prove that the topological space $X$ has countable extent. To derive a contradiction, assume that  $X$ contains an uncountable closed discrete subset $D$.  By the $\Delta$-System Lemma \cite[p.~49]{Kunen}, there exist a finite set $\Delta\subset \w_1$ and an uncountable subset $D'\subset D$ such that $\supp(x)\cap\supp(y)=\Delta$ for all distinct elements $x,y\in D'$. We claim that the characteristic function $z\in X$ of the finite set $\Delta=\supp(z)$ is an accumulation point of the set $D'$. Given any neighborhood $O_z$ of $z$, find a countable ordinal $\alpha\in\w_1$ such that $U_\alpha[z]\subset O_z$. Since the family $(\supp(x)\setminus\Delta)_{x\in D'}$ is disjoint, the set $D''=\{x\in D':(\supp(x)\setminus \Delta)\cap [0,\alpha)\ne\emptyset\}$ is countable. Observe that for every $x\in D'\setminus D''$ we get $\supp(x)\cap [0,\alpha)=\Delta$ and hence $x|\alpha=z|\alpha$ and $x\in U_\alpha[z]\subset  O_z$, witnessing that $z$ is an accumulation point of the set $D'\subset D$. But this contradicts the choice of $D$ as a closed discrete subspace in $X$.

Now we prove that the uniformity $\U$ coincides with the finest uniformity $\U(X)$ of the topological space $X$. It suffices to prove that every surjective continuous map $f:X\to M$ to a metric space $M$ is uniformly continuous with respect to the uniformity $\U$. Since $X$ is a $P$-space, the map $f$ remains continuous with respect to the discrete topology on $M$. So, we can assume that the metric space $M$ is discrete. Since the space $X$ has countable extent, the discrete metric space $M=f(X)$ has countable extent too and hence $M$ is countable. In this case the uniform continuity of the map $f$ was proved by Comfort and Ross in \cite{CR66}.
Hence the finest uniformity $\U(X)$ coincides with $\U$. Since the space  $X$ has countable extent, the uniformity $\U=\U(X)$ is $\w$-narrow.

Finally we show that under $\w_1=\mathfrak b$ the uniformity $\U=\U(X)$ has a $\mathfrak G$-base.
 By \cite{LPT}, there exists a monotone cofinal map $\varphi\colon \w^\w\to\mathfrak b=\w_1$.  For convenience of the reader, we present a simple construction of such map $\varphi$.
By the definition of the cardinal $\mathfrak b$, there exists a transfinite sequence $\{f_\alpha\}_{\alpha\in\mathfrak b}\subset\w^\w$ such that for every $f\in \w^\w$ there exists $\alpha\in\mathfrak b$ with $f_\alpha\not\le^* f$ (which means that the set $\{n\in\w:f(n)>g(n)\}$ is infinite). Then the map $\varphi:\w^\w\to\mathfrak b$, $\varphi:f\mapsto\min\{\alpha\in\mathfrak b:f_\alpha\not\le^* f\}$, is monotone and cofinal, witnessing that $\w^\w\succcurlyeq\mathfrak b=\w_1$. Then $(U_{\varphi(\alpha)})_{\alpha\in\w^\w}$ is a $\mathfrak G$-base of the uniformity $\U$.

\end{proof}

\section{Free locally convex spaces over uniform spaces}\label{s:Lc}

For a uniform space $X$ its {\em free locally convex space} is a pair $(\Lc_u(X),\delta_X)$ consisting of a locally convex space $\Lc_u(X)$ and a uniformly continuous map $\delta_X:X\to \Lc_u(X)$ such that for every uniformly continuous map $f:X\to Y$ into a locally convex space $Y$ there exists a continuous linear operator $\bar f:\Lc_u(X)\to Y$ such that $\bar f\circ\delta_X=f$. By a standard technique
\cite{Rai} it can be shown that for every uniform space $X$ the free locally convex space $(\Lc_u(X),\delta_X)$ exists and is unique up to a topological isomorphism.

Observe that for a topological space $X$ the free locally convex space $(\Lc(X),\delta_X)$ coincides with the free locally convex space $(\Lc_u(X),\delta_X)$ of the space $X$ endowed with the finest uniformity $\U(X)$.

For a uniform space $X$ by $C_u(X)$ we denote the linear space of all uniformly continuous real-valued functions on $X$. A subset $E\subset C_u(X)$ is called
\begin{itemize}
\item {\em pointwise bounded} if for every point $x\in X$ the set $E(x)=\{f(x):f\in E\}$ is bounded in the real line;
\item {\em equicontinuous} if for every $\e>0$ there is an entourage $U\in\U(X)$ such that $|f(x)-f(y)|<\e$ for all $f\in E$ and $(x,y)\in U$.
\end{itemize}

By $\E(C_u(X))$ we denote the set of pointwise bounded equicontinuous subfamilies in $C_u(X)$.
The set $\E(C_u(X))$ is a poset with respect to the inclusion order (defined by $E\le F$ iff $E\subset F$).

Let $C^*_u(X)$ be the linear space of all linear functionals on $C_u(X)$, endowed with the topology of uniform convergence on pointwise bounded equicontinuous subsets of $C_u(X)$. A neighborhood base of this topology at zero consists of the polar sets
$${E}^*=\{\mu\in C^*_u(X):\sup_{f\in E}|\mu(f)|\le1\}\mbox{ \ where $E\in\E(C_u(X))$}.$$

Each element $x\in X$ can be identified with the Dirac measure $\delta_x\in C_u^*(X)$ assigning to each function $f\in C_u(X)$ its value $f(x)$ at $x$. It can be shown that the map $\delta:X\to C^*_u(X)$, $\delta:x\mapsto\delta_x$, is uniformly continuous in the sense that for any neighborhood $V\subset C_u^*(X)$ of zero there is an entourage $U\in\U(X)$ such that $\delta_x-\delta_y\in V$ for any $(x,y)\in U$.

Let $\Lc_u(X)$ be the linear hull of the set $\delta(X)=\{\delta_x:x\in X\}$ in $C_u^*(X)$. By \cite{Rai}, the pair $(\Lc_u(X),\delta)$ is a free locally convex space over the uniform space $X$.

In the space $\Lc_u(X)$ consider the subspaces
$$
\begin{aligned}
&\IN(X{-}X)=\{n(\delta_x-\delta_y):n\in\IN,\;x,y\in X\},\\
&\tfrac1\IN X=\{\tfrac1n\delta_x:n\in\IN,\;x\in X\}, \mbox{ and}\\
&\Lambda_u(X)= \IN(X{-}X)\cup\tfrac1\IN X.
\end{aligned}
$$

\begin{lemma}\label{l:u1} For any uniform space $X$ we have the reductions
$$\E(C_u(X))\succcurlyeq \Tau_0(C^*_u(X))\succcurlyeq \Tau_0(\Lc_u(X))\succcurlyeq \Tau_0(\Lambda_u(X))\succcurlyeq\E(C_u(X)),$$
therefore all posets above are equivalent each other.
\end{lemma}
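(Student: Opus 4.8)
The plan is to establish the cyclic chain of reductions by exhibiting an explicit monotone cofinal map for each of the four links; since the composition of monotone cofinal maps is again monotone cofinal, transitivity of $\succcurlyeq$ will then force all four posets to be equivalent. Throughout I keep in mind that $\Tau_0$ carries the converse-inclusion order, so a monotone map must send set-larger neighborhoods of zero to set-smaller objects.

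For the first link $\E(C_u(X))\succcurlyeq \Tau_0(C^*_u(X))$ I would use the polar map $E\mapsto E^*$. If $E\subset F$ then $F^*\subset E^*$, so the map reverses inclusion, which is exactly monotonicity for the converse-inclusion order on $\Tau_0(C^*_u(X))$; cofinality is immediate because, by the very definition of the topology of $C^*_u(X)$, the polars $\{E^*:E\in\E(C_u(X))\}$ form a neighborhood base at zero. The next two links $\Tau_0(C^*_u(X))\succcurlyeq \Tau_0(\Lc_u(X))$ and $\Tau_0(\Lc_u(X))\succcurlyeq \Tau_0(\Lambda_u(X))$ are soft: since $\Lambda_u(X)\subset\Lc_u(X)\subset C^*_u(X)$ carry the subspace topologies and $0\in\Lambda_u(X)$ (as $0=\delta_x-\delta_x\in\IN(X{-}X)$), the restriction maps $U\mapsto U\cap\Lc_u(X)$ and $U\mapsto U\cap\Lambda_u(X)$ are monotone trivially, and cofinal because every neighborhood of zero in a subspace contains the trace of a neighborhood of zero of the ambient space.

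The real content is the closing link $\Tau_0(\Lambda_u(X))\succcurlyeq \E(C_u(X))$, and this is where I expect the main difficulty. To a neighborhood $O\in\Tau_0(\Lambda_u(X))$ I would assign its polar inside $C_u(X)$,
$$E(O)=\{f\in C_u(X):\sup_{\mu\in O}|\mu(f)|\le 1\}.$$
Monotonicity is clear: shrinking $O$ enlarges the polar, which matches the converse-inclusion order on $\Tau_0$ against the inclusion order on $\E(C_u(X))$. The crux is to verify that $E(O)$ genuinely lands in $\E(C_u(X))$, i.e.\ that it is pointwise bounded and equicontinuous --- and this is precisely where the two pieces of $\Lambda_u(X)$ earn their place. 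Pointwise boundedness at $x$ follows from the scaled Diracs: since $\tfrac1n\delta_x\to 0$ in $C^*_u(X)$, some $\tfrac1n\delta_x$ lies in $O$, forcing $|f(x)|\le n$ for every $f\in E(O)$. Equicontinuity follows from the difference vectors: given $\e>0$ pick $n>1/\e$; using that $O$ contains the trace of a neighborhood $W$ of zero and the uniform continuity of $\delta:X\to C^*_u(X)$, I find an entourage $U$ with $\delta_x-\delta_y\in\tfrac1n W$, hence $n(\delta_x-\delta_y)\in O$, for all $(x,y)\in U$; then $|f(x)-f(y)|\le 1/n<\e$ for every $f\in E(O)$ simultaneously.

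Finally, cofinality of $O\mapsto E(O)$ is a bipolar computation: given $E\in\E(C_u(X))$, set $O=E^*\cap\Lambda_u(X)$, a neighborhood of zero in $\Lambda_u(X)$; then every $f\in E$ satisfies $|\mu(f)|\le 1$ for all $\mu\in E^*\supset O$, so $E\subset E(O)$. This closes the cycle. I expect the only genuine obstacle to be the equicontinuity verification, because it is the one step that must convert the purely order-theoretic information carried by a single neighborhood of zero back into a uniform estimate on $X$, and it relies essentially on the uniform continuity of the canonical map $\delta$ together with the presence of the vectors $n(\delta_x-\delta_y)$ in $\Lambda_u(X)$.
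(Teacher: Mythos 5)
Your proposal is correct and follows essentially the same route as the paper: the polar map $E\mapsto E^*$ for the first link, the trivial restriction maps induced by the inclusions $\Lambda_u(X)\subset\Lc_u(X)\subset C_u^*(X)$ for the middle links, and for the closing link the polar $E_V=\bigcap_{\mu\in V}\{f\in C_u(X):|\mu(f)|\le 1\}$, with pointwise boundedness extracted from the scaled Diracs $\tfrac1n\delta_x\to 0$, equicontinuity from the uniform continuity of $\delta$ applied to the vectors $n(\delta_x-\delta_y)$, and cofinality via $V=E^*\cap\Lambda_u(X)$. Your slight rephrasing of the equicontinuity step (passing through the trace of an ambient neighborhood $W$ rather than invoking uniform continuity of $n\delta$ directly) is only cosmetic, so there is nothing to add.
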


\begin{proof} The monotone cofinal map $\E(C_u(X))\to \Tau_0(C_u^*(X))$,
which assigns to $E$ the polar set  ${E}^*$,
 witnesses that $\E(C_u(X))\succcurlyeq \Tau_0(C^*_u(X))$. The reductions $\Tau_0(C^*_u(X))\succcurlyeq \Tau_0(\Lc_u(X))\succcurlyeq \Tau_0(\Lambda_u(X))$ trivially follow from the inclusions $$\Lambda_u(X)\subset \Lc_u(X)\subset C_u^*(X).$$

To prove that  $\Tau_0(\Lambda_u(X))\succcurlyeq\E(C_u(X))$, for every neighborhood $V\subset \Lambda_u(X)$ of zero, consider the set
$$E_V=\bigcap_{\mu\in V}\{f\in C_u(X):|\mu(f)|\le 1\}.$$
First we show that the set $E_V$ is pointwise bounded and equicontinuous.

To see that $E_V$ is pointwise bounded, fix any point $x\in X$. Taking into account that the sequence $\big(\frac1n\delta_x\big)_{n\in\IN}$ converges to zero in $\Lambda_u(X)$, find a number $n\in\IN$ with $\frac1n\delta_x\in V$. Then for every $f\in E_V$ we get $|\frac1n\delta_x(f)|\le 1$ and hence $|f(x)|\le n$, which means that the set $E_V$ is pointwise bounded.

Next, we check that $E_V$ is equicontinuous. Given any point $x\in X$ and $\e>0$ we should find an entourage $U\in\U(X)$ such that $|f(x)-f(y)|<\e$ for all $f\in E_V$ and $(x,y)\in U$.
Find $n\in\IN$ such that $\frac1n<\e$. The uniform continuity of the map $n\delta:X\to C^*_u(X)$, $n\delta:x\mapsto n\cdot\delta_x$,  yields an entourage $U\in\U(X)$ such that $n(\delta_x-\delta_y)\in V$ for any $(x,y)\in U$. Then for any $f\in E_V$ the inclusion $n(\delta_x-\delta_y)\in V$ implies $|n(\delta_x(f)-\delta_y(f))|\le 1$ and  $|f(x)-f(y)|\le \frac1n<\e$, which means that the set $E_V$ is equicontinuous.

So, the map $\Tau_0(\Lambda_u(X))\to \E(C_u(X))$, $V\mapsto E_V$, is well-defined. It is clear that this map is monotone. It remains to show that it is cofinal. Given any pointwise bounded equicontinuous set $E\in\E(C_u(X))$, consider the neighborhood $V=\{\mu\in \Lambda_u(X):\sup_{f\in E}|\mu(f)|\le 1\}$ of zero in $\Lambda_u(X)$ and observe that $E\subset E_V$. So, $\Tau_0(\Lambda_u(X))\succcurlyeq \E(C_u^*(X))$.
\end{proof}

As before, in the following lemma we consider the space $C_u(X)$ as a poset endowed with the partial order $f\le g$ iff $f(x)\le g(x)$ for all $x\in X$.

\begin{lemma}\label{l:u2} For any uniform space $X$ of density $\kappa$ we have the reductions $$\U(X)^\w\times \w^\kappa\succcurlyeq\E(C_u(X))\cong(\U(X))^\w\times C_u(X).$$
\end{lemma}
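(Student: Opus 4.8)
The plan is to establish all three reductions by writing down explicit monotone cofinal maps organized around the two defining properties of a member $E\in\E(C_u(X))$: equicontinuity, which is encoded by a sequence of entourages (an element of $\U(X)^\w$), and pointwise boundedness, which is encoded by a bound — recorded on a dense set (an element of $\w^\kappa$) in the first reduction, and by a genuine uniformly continuous majorant (an element of $C_u(X)$) in the equivalence.

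For the reductions $\U(X)^\w\times\w^\kappa\succcurlyeq\E(C_u(X))$ and $\U(X)^\w\times C_u(X)\succcurlyeq\E(C_u(X))$ I would use essentially the same map. Fix a dense set $D=\{d_\xi:\xi<\kappa\}\subseteq X$. To a pair $((U_n)_{n\in\w},b)$ assign the family $E_{(U_n),b}$ of all $f\in C_u(X)$ satisfying $|f(x)-f(y)|\le 1/n$ whenever $(x,y)\in U_n$, together with the bound $|f(d_\xi)|\le b(\xi)$ for all $\xi$ (in the $\w^\kappa$ case) or $|f(x)|\le b(x)$ for all $x$ (in the $C_u(X)$ case). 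The oscillation conditions make $E_{(U_n),b}$ equicontinuous; in the $\w^\kappa$ case pointwise boundedness at an arbitrary $x$ follows by choosing $d_\xi\in U_1[x]\cap D$ and writing $|f(x)|\le b(\xi)+1$, while in the $C_u(X)$ case it is immediate. Monotonicity is clear from the definitions, and cofinality is exactly the statement that a given $E\in\E(C_u(X))$ is recovered: equicontinuity of $E$ supplies entourages $U_n$ realizing the oscillation bound $1/n$, and the (finite) suprema $\sup_{f\in E}|f(d_\xi)|$, resp. the function $\sup_{f\in E}|f|$, supply the bound $b$, giving $E\subseteq E_{(U_n),b}$.

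For the remaining reduction $\E(C_u(X))\succcurlyeq\U(X)^\w\times C_u(X)$ I would send $E$ to the pair $((W_n^E)_{n\in\w},g_E)$, where
\[
W_n^E=\{(x,y)\in X\times X:\textstyle\sup_{f\in E}|f(x)-f(y)|\le 1/n\}\quad\text{and}\quad g_E=\sup_{f\in E}|f|.
\]
Each $W_n^E$ lies in $\U(X)$ because it contains an equicontinuity entourage of $E$, and $g_E\in C_u(X)$ because the supremum of an equicontinuous, pointwise bounded family is again uniformly continuous and finite; both assignments are visibly monotone. The serious point is cofinality: given $((U_n)_{n},g)$ I must produce a single $E$ with $W_n^E\subseteq U_n$ for every $n$ and $g\le g_E$.

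The hard part will be this last construction, because the two demands pull in opposite directions: forcing $\sup_{f\in E}|f(x)-f(y)|>1/n$ for all $(x,y)\notin U_n$ requires functions that oscillate a lot, yet $E$ must stay equicontinuous across all scales simultaneously. The naive attempt of throwing in, for each $n$, all the maps $x\mapsto d_n(x,a)$ (with $d_n$ a uniformly continuous pseudometric satisfying $[d_n]_{<1}\subseteq U_n$, obtained from the metrization lemma for uniformities) fails equicontinuity, since no single entourage controls infinitely many unrelated pseudometrics at once. I would resolve this by rescaling and truncating: set $g_{n,a}=\tfrac2n\min\{d_n(\cdot,a),1\}$ and let $E=\{g_{n,a}:n\in\w,\ a\in X\}\cup\{g\}$. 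For $(x,y)\notin U_n$ we have $d_n(x,y)\ge1$, so $g_{n,y}$ witnesses oscillation $2/n>1/n$, giving $W_n^E\subseteq U_n$; the factor $2/n$ bounds the total variation of $g_{n,a}$ by $2/n$, so all but finitely many of these functions are automatically below any prescribed scale, and the finitely many remaining pseudometrics are handled by one entourage, yielding equicontinuity. Finally adding $g$ forces $g_E\ge|g|\ge g$ without disturbing the oscillation estimates, and boundedness of $E$ follows from the uniform bound $2$ on the $g_{n,a}$ together with pointwise boundedness of $\{g\}$.
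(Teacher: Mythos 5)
Your proposal is correct, and its overall architecture coincides with the paper's: the same two families realize $\U(X)^\w\times\w^\kappa\succcurlyeq\E(C_u(X))$ and $\U(X)^\w\times C_u(X)\succcurlyeq\E(C_u(X))$ (the paper's $E_P$ is literally your $E_{(U_n),b}$ with thresholds $2^{-n}$ in place of $1/n$), and the converse reduction uses the same monotone map $E\mapsto\big((W_n^E)_{n\in\w},\sup_{f\in E}|f|\big)$. The one genuine divergence is the cofinality witness for that converse reduction. The paper builds a \emph{single} pseudometric: it picks a normal sequence with $V_n\circ V_n\circ V_n\subset U_n\cap V_{n-1}$, applies Engelking's Theorem 8.1.10 to get one $d$ with $\{d<2^{-n}\}\subset V_n\subset\{d\le 2^{-n}\}$, and takes $E=\{f_z:z\in X\}$ where $f_z=\varphi+d(z,\cdot)$, so the majorant is folded into every member; you instead take one pseudometric per scale, truncate and rescale to $g_{n,a}=\tfrac2n\min\{d_n(\cdot,a),1\}$, and adjoin $g$ as a separate element. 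Your variant buys an almost free equicontinuity check (for large $n$ the bound $\|g_{n,a}\|_\infty\le 2/n$ kills the oscillation outright, leaving only finitely many pseudometrics to control by one entourage) and avoids the normal-sequence bookkeeping; it also cleanly separates the oscillation witnesses from the majorant, and thereby sidesteps a small slip in the paper's verification: the paper asserts $d(x,y)=|f_x(x)-f_x(y)|$, whereas in fact $f_x(x)-f_x(y)=\varphi(x)-\varphi(y)-d(x,y)$, so one must combine the two instances $z=x$ and $z=y$ (which give $2d(x,y)<2^{-n+1}$) to conclude $E_n\subset U_n$. The paper's single-$d$ construction yields in exchange the tighter two-sided estimate $\varphi\le\varphi_E\le\varphi+1$, though only the lower bound is needed for cofinality. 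Two cosmetic points in your write-up: start the indexing at $n\ge 1$ (or use $2^{-n}$) so that $1/n$ and $2/n$ are defined, and record explicitly that $g_E\in C_u(X)$ and $W_n^E\in\U(X)$ are the well-definedness checks --- which you do, and which match the paper.
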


\begin{proof} Fix a dense subset $D\subset X$ of cardinality $\kappa$. To see that $\U(X)^\w\times \w^D\succcurlyeq \E(C_u(X))$, for any pair $P=\big((U_n)_{n\in\w},\varphi)\in \U(X)^\w\times \w^D$ consider the pointwise bounded equicontinuous set
$$E_P=\bigcap_{x\in D}\{f\in C_u(X)\colon |f(x)|\le\varphi(x)\}\cap\bigcap_{n\in\w}\bigcap_{x,y\in U_n}\{f\in C_u(X)\colon |f(x)-f(y)|\le \tfrac1{2^{n}}\}.$$
It is clear that the map $\U(X)^\w\times \w^D\to \E(C_u(X))$, $P\mapsto E_P$, is monotone and cofinal, witnessing that $\U(X)^\w\times \w^D\succcurlyeq \E(C_u(X))$.

By analogy we can prove that $\U(X)^\w\times C_u(X)\succcurlyeq \E(C_u(X))$.

To see that $\E(C_u(X))\succcurlyeq \U(X)^\w\times C_u(X)$, to every pointwise bounded equicontinuous set $E\in \E(C_u(X))$ assign the function $$\varphi_E\in C_u(X),\;\;\varphi_E:x\mapsto\sup_{f\in E}|f(x)|,$$ and the sequence $(E_n)_{n\in\w}$ of entourages
$$E_n=\bigcap_{f\in E}\{(x,y)\in X\times X:|f(x)-f(y)|<2^{-n}\}\in\U(X),\;\;n\in\w.$$
It is clear that the map $\E(C_u(X))\succcurlyeq \U(X)^\w\times C_u(X)$, $E\mapsto ((E_n)_{n\in\w},\varphi_E)$, is well-defined and monotone. It remains to prove that this map is cofinal.

Fix any pair $P=\big((U_n)_{n\in\w},\varphi)\in \U(X)^\w\times C_u(X)$. Let $V_{-1}=X\times X$ and choose a sequence of entourages $(V_n)_{n\in\w}\in\U(X)^\w$ such that $V_n\circ V_n\circ V_n\subset U_n\cap V_{n-1}$ for all $n\in\w$. By Theorem~8.1.10 \cite{Eng}, there is a pseudometric $d:X\times X\to[0,1]$ such that
$$\{(x,y)\in X\times X:d(x,y)<2^{-n}\}\subset V_n\subset \{(x,y)\in X\times X:d(x,y)\le 2^{-n}\}$$for all $n\in\w$.
For every $z\in X$ consider the uniformly continuous function $f_z\in C_u(X)$ defined by $f_z(x)=\varphi(x)+d(z,x)$ for $x\in X$. It is easy to see that the function family $E=\{f_z:z\in X\}$ is pointwise bounded and equicontinuous. We claim that $((U_n)_{n\in\w},\varphi)\le ((E_n)_{n\in\w},\varphi_E)$. It is clear that $\varphi\le\varphi_E\le \varphi+1$. Next, we show that for every $n\in\w$ the set $E_n=\bigcap_{f\in E}\{(x,y)\in X\times X:|f(x)-f(y)|<2^{-n}\}\subset U_n$. Indeed, for any $(x,y)\in E_n$ we get $d(x,y)=|f_x(x)-f_x(y)|<2^{-n}$ and hence $(x,y)\in V_n\subset U_n$ by the choice of the pseudometric $d$ and the entourage $V_n$.
\end{proof}

Lemmas~\ref{l:u1} and \ref{l:u2} imply the following reduction theorem.

\begin{theorem}\label{t:u} For any uniform space $X$ of density $\kappa$ we have the reductions:
$$\Tau_0(C_u^*(X))\cong \Tau_0(\Lc_u(X))\cong \Tau_0(\Lambda_u(X))\cong \E(C_u(X))\cong \U(X)^\w\times C_u(X)\preccurlyeq \U(X)^\w\times \w^\kappa.$$
\end{theorem}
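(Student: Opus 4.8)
The plan is to assemble the two preceding lemmas, since all of the substantive work has already been carried out in them; what remains is purely formal bookkeeping with the reduction relation, and the density hypothesis enters only through Lemma~\ref{l:u2}. First I would record the elementary fact that $\succcurlyeq$ is transitive: if $f\colon P\to Q$ and $g\colon Q\to R$ are monotone and cofinal, then $g\circ f\colon P\to R$ is monotone as a composition of monotone maps, and it is cofinal because for any $r\in R$ one picks $q\in Q$ with $r\le g(q)$ and then $p\in P$ with $q\le f(p)$, whence $r\le g(q)\le g(f(p))$ by monotonicity of $g$. Consequently $\cong$ is an equivalence relation, and a $\preccurlyeq$-reduction may be pre- and post-composed with $\cong$-equivalences.

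Next I would extract the component relations. Lemma~\ref{l:u1} furnishes the cyclic chain
$$\E(C_u(X))\succcurlyeq \Tau_0(C^*_u(X))\succcurlyeq \Tau_0(\Lc_u(X))\succcurlyeq \Tau_0(\Lambda_u(X))\succcurlyeq\E(C_u(X)),$$
so by transitivity every poset in this loop reduces to every other, and the four posets $\Tau_0(C_u^*(X))$, $\Tau_0(\Lc_u(X))$, $\Tau_0(\Lambda_u(X))$, $\E(C_u(X))$ are pairwise $\cong$-equivalent. From Lemma~\ref{l:u2} I would then read off the further equivalence $\E(C_u(X))\cong \U(X)^\w\times C_u(X)$ together with the one-sided reduction $\U(X)^\w\times\w^\kappa\succcurlyeq\E(C_u(X))$, that is, $\E(C_u(X))\preccurlyeq \U(X)^\w\times\w^\kappa$.

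Finally I would splice these together. Chaining the equivalences from Lemma~\ref{l:u1} with $\E(C_u(X))\cong \U(X)^\w\times C_u(X)$ produces the entire string of $\cong$-signs displayed in the statement, and appending $\E(C_u(X))\preccurlyeq \U(X)^\w\times\w^\kappa$ (legitimate since $\preccurlyeq$ composes with $\cong$) yields the terminal $\preccurlyeq$. The main point deserving any care is the transitivity of reductions, and even that is routine; there is no genuine obstacle at this stage. The real difficulties have already been dispatched earlier: the polar duality and equicontinuity correspondences underlying Lemma~\ref{l:u1}, and the pseudometrization argument (via Theorem~8.1.10 of \cite{Eng}) that produces the cofinal map witnessing $\E(C_u(X))\succcurlyeq \U(X)^\w\times C_u(X)$ in Lemma~\ref{l:u2}.
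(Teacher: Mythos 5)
Your proposal is correct and takes essentially the same route as the paper, which derives Theorem~\ref{t:u} simply by combining Lemmas~\ref{l:u1} and~\ref{l:u2}, exactly as you do. Your explicit check that monotone cofinal maps compose (making $\cong$ an equivalence relation compatible with $\preccurlyeq$) only spells out what the paper leaves implicit.
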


As an immediate consequence, we obtain a key characterization of uniform spaces $X$ whose free locally convex spaces $\Lc_u(X)$ have local  $\mathfrak G$-bases.

\begin{corollary}\label{c:Lu3} For a uniform space $X$ the following conditions are equivalent:
\begin{enumerate}
\item the free locally convex space $\Lc_u(X)$ has a local $\mathfrak G$-base;
\item the uniformity $\U(X)$ has a $\mathfrak G$-base and the poset $C_u(X)$ is $\w^\w$-dominated;
\item the uniformity $\U(X)$ has a $\mathfrak G$-base and the poset $C_u(X)$ is $\w^\w$-dominated in $\IR^X$.
\end{enumerate}
\end{corollary}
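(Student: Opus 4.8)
The plan is to read the statement off the reduction chain of Theorem~\ref{t:u} together with elementary arithmetic of the poset $\w^\w$. Since $\Lc_u(X)$ is a topological vector space, and hence a topological group, it has a local $\mathfrak G$-base if and only if it has a $\mathfrak G$-base at zero, i.e. if and only if $\w^\w\succcurlyeq\Tau_0(\Lc_u(X))$. By Theorem~\ref{t:u} this reduction is in turn equivalent to $\w^\w\succcurlyeq\U(X)^\w\times C_u(X)$ and to $\w^\w\succcurlyeq\E(C_u(X))$. I would then prove the cycle of implications $(1)\Rightarrow(2)\Rightarrow(3)\Rightarrow(1)$.

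For $(1)\Rightarrow(2)$ I would compose a monotone cofinal map $\w^\w\to\U(X)^\w\times C_u(X)$ (supplied by $(1)$ and Theorem~\ref{t:u}) with the coordinate projections onto $\U(X)$ and onto $C_u(X)$, which are themselves monotone and cofinal. As a composition of monotone cofinal maps is again monotone cofinal, this yields $\w^\w\succcurlyeq\U(X)$ (so $\U(X)$ has a $\mathfrak G$-base) and $\w^\w\succcurlyeq C_u(X)$ (so $C_u(X)$ is $\w^\w$-dominated). The implication $(2)\Rightarrow(3)$ is immediate: a monotone cofinal map $\w^\w\to C_u(X)$, regarded as a map into the larger poset $\IR^X$, witnesses that $C_u(X)$ is $\w^\w$-dominated in $\IR^X$.

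The only substantial step is $(3)\Rightarrow(1)$, and this is where I expect the work to lie. The idea is that the $\mathfrak G$-base of $\U(X)$ controls the equicontinuity coordinate of $\E(C_u(X))$ while the $\IR^X$-domination controls the pointwise-boundedness coordinate. Since $\U(X)$ has a $\mathfrak G$-base, $\w^\w\cong(\w^\w)^\w\succcurlyeq\U(X)^\w$, so I would fix a monotone cofinal map $u\colon\w^\w\to\U(X)^\w$; and the $\IR^X$-domination provides a monotone map $f\colon\w^\w\to\IR^X$ such that every $h\in C_u(X)$ satisfies $h\le f(\alpha)$ for some $\alpha$. Using a fixed order isomorphism $\w^\w\cong\w^\w\times\w^\w$, I would define $F\colon\w^\w\to\E(C_u(X))$ by
\[
F(\alpha',\alpha'')=\{h\in C_u(X):\ |h|\le f(\alpha'')\ \text{and}\ |h(x)-h(y)|\le 2^{-n}\ \text{for all}\ n\in\w\ \text{and}\ (x,y)\in u(\alpha')_n\}.
\]
Each $F(\alpha',\alpha'')$ is pointwise bounded (by $f(\alpha'')$) and equicontinuous (with moduli $u(\alpha')_n$), so $F$ is well-defined, and it is plainly monotone.

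The crux is the cofinality of $F$, which rests on a single observation: for every pointwise bounded equicontinuous family $E\subset C_u(X)$ the upper envelope $\varphi_E(x):=\sup_{h\in E}|h(x)|$ is again uniformly continuous and therefore belongs to $C_u(X)$ (finiteness follows from pointwise boundedness, and $|\varphi_E(x)-\varphi_E(y)|\le\sup_{h\in E}|h(x)-h(y)|$ gives uniform continuity). Granting this, for a given $E\in\E(C_u(X))$ the entourages $E_n:=\bigcap_{h\in E}\{(x,y):|h(x)-h(y)|<2^{-n}\}$ lie in $\U(X)$, so cofinality of $u$ yields $\alpha'$ with $u(\alpha')_n\subset E_n$ for all $n$, while the $\IR^X$-domination applied to $\varphi_E\in C_u(X)$ yields $\alpha''$ with $\varphi_E\le f(\alpha'')$; together these give $E\subset F(\alpha',\alpha'')$. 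Thus $\w^\w\succcurlyeq\E(C_u(X))$, and Theorem~\ref{t:u} turns this into $\w^\w\succcurlyeq\Tau_0(\Lc_u(X))$, i.e. $(1)$. I expect the main obstacle to be precisely the passage from domination in $\IR^X$ to a genuine pointwise-bounded equicontinuous family, and the envelope observation is exactly what renders the (possibly discontinuous) majorants $f(\alpha'')$ harmless.
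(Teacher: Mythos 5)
Your proposal is correct and follows essentially the same route as the paper: the equivalence of (1) with the reductions is exactly Theorem~\ref{t:u}, and your construction of $F(\alpha',\alpha'')$ together with the uniformly continuous envelope $\varphi_E$ is precisely the Lemma~\ref{l:u2}-style argument that the paper invokes (it disposes of $(3)\Rightarrow(2)$ with the words ``by analogy with Lemma~\ref{l:u2}''). The only difference is organizational: you close the cycle via $(3)\Rightarrow(1)$ directly through $\E(C_u(X))$, whereas the paper proves $(3)\Rightarrow(2)$ and then uses $(1)\Leftrightarrow(2)$ --- the same mathematics either way.
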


\begin{proof} The equivalence $(1)\Leftrightarrow(2)$ follows from the reduction $\Tau_0(\Lc_u(X))\cong C_u(X)\times\U(X)^\w$ established in Theorem \ref{t:u}. The implication $(2)\Ra(3)$ is trivial and $(3)\Ra(2)$ can be proved by analogy with Lemma~\ref{l:u2}.
\end{proof}

\begin{corollary}\label{c:sepun} For a separable uniform space $X$ the free locally convex space $\Lc_u(X)$ has a local $\mathfrak G$-base if and only if the uniformity $\U(X)$ of $X$ has a $\mathfrak G$-base.
\end{corollary}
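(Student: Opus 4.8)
The plan is to derive this directly from Corollary~\ref{c:Lu3} by showing that, under the separability hypothesis, the ``$\w^\w$-dominance of $C_u(X)$'' clause is automatic and hence redundant. In other words, I would prove that whenever $X$ is a separable uniform space whose uniformity $\U(X)$ admits a $\mathfrak G$-base, the poset $C_u(X)$ is necessarily $\w^\w$-dominated; the converse implication (from local $\mathfrak G$-base to $\mathfrak G$-base of $\U(X)$) is already contained in Corollary~\ref{c:Lu3}(1)$\Ra$(2).

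First I would recall that by Corollary~\ref{c:Lu3} the space $\Lc_u(X)$ has a local $\mathfrak G$-base if and only if $\U(X)$ has a $\mathfrak G$-base \emph{and} $C_u(X)$ is $\w^\w$-dominated. So the whole content of the statement reduces to the implication: if $X$ is separable and $\U(X)$ has a $\mathfrak G$-base, then $C_u(X)$ is $\w^\w$-dominated. The key observation is that a uniform space whose uniformity has a $\mathfrak G$-base is automatically $\w$-narrow in the separable case, since separability already gives a countable dense set $D\subset X$; for each entourage $U$ the balls $\{U[x]:x\in D\}$ cover $X$, so $\U(X)$ is $\w$-narrow outright. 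More importantly, separability means every uniformly continuous $f\in C_u(X)$ is determined by its restriction to the countable dense set $D$, so I would aim to realize $C_u(X)$ as $\w^\w$-dominated by building a dominating family indexed by $\w^\w$ out of the $\mathfrak G$-base $(U_\alpha)_{\alpha\in\w^\w}$ together with a cofinal assignment of pointwise bounds on $D$.

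Concretely, I would fix a countable dense $D=\{d_n:n\in\w\}\subset X$ and a $\mathfrak G$-base $(U_\alpha)_{\alpha\in\w^\w}$ of $\U(X)$. For a pair consisting of a modulus-of-continuity index $\alpha\in\w^\w$ and a pointwise-bound index $\beta\in\w^\w$ (bounding $|f(d_n)|\le\beta(n)$ and controlling the oscillation via the entourages $U_{\alpha}$ in the style of the set $E_P$ in the proof of Lemma~\ref{l:u2}), I would define a canonical uniformly continuous majorant $g_{\alpha,\beta}$; any $f\in C_u(X)$ lies below some such $g_{\alpha,\beta}$ because its modulus of uniform continuity is captured by some $U_\alpha$ (using that $(U_\alpha)$ is a $\mathfrak G$-base) and its values on the countable set $D$ are bounded by some $\beta\in\w^\w$. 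Since $\w^\w\times\w^\w\cong\w^\w$, this yields a monotone cofinal family indexed by $\w^\w$, i.e. $\w^\w$-dominance of $C_u(X)$. Combining this with the $\mathfrak G$-base of $\U(X)$ and invoking Corollary~\ref{c:Lu3} completes the argument.

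The main obstacle I anticipate is the last cofinality step: verifying that an \emph{arbitrary} uniformly continuous $f$ is dominated by one of the canonical majorants $g_{\alpha,\beta}$, rather than merely being bounded on $D$. One must check that controlling the oscillation of $f$ on the $U_\alpha$-balls together with its values on the dense set $D$ genuinely bounds $f$ everywhere on $X$ — this is where separability and uniform continuity must interact, essentially the same ``pseudometric interpolation'' used in Lemma~\ref{l:u2} to pass from balls to a genuine continuous majorant. The separability is precisely what lets the countably many pointwise constraints on $D$ propagate (via uniform continuity) to a global bound, which is why the hypothesis cannot be dropped.
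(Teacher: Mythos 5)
Your proposal is correct and is essentially the paper's own (implicit) proof: the corollary is just the $\kappa=\w$ case of the reduction $\Tau_0(\Lc_u(X))\preccurlyeq\U(X)^\w\times\w^\kappa$ from Theorem~\ref{t:u}, whose underlying mechanism --- the first reduction of Lemma~\ref{l:u2}, combining pointwise bounds on a countable dense set with oscillation control by a sequence of entourages drawn from the $\mathfrak G$-base --- is exactly your $E_P$-style construction of the majorants $g_{\alpha,\beta}$, including the step you flag where density plus equicontinuity propagates the bounds from $D$ to all of $X$. Your only repackaging is to route the conclusion through Corollary~\ref{c:Lu3} by showing the $\w^\w$-dominance clause for $C_u(X)$ is automatic, which amounts to composing the two reductions already proved in Lemma~\ref{l:u2}.
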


A similar characterization holds for $\w$-narrow uniform spaces under the set-theoretic assumption $\w_1<\mathfrak b$.

\begin{corollary} Assume that $\w_1<\mathfrak b$.  For an $\w$-narrow uniform space $X$ the free locally convex space $\Lc_u(X)$ has a local $\mathfrak G$-base if and only if the uniformity $\U(X)$ of $X$ has a $\mathfrak G$-base.
\end{corollary}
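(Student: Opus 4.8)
The plan is to reduce this equivalence to the already-established separable case recorded in Corollary~\ref{c:sepun}, exploiting the fact that the hypothesis $\w_1<\mathfrak b$ together with $\w$-narrowness forces the space $X$ to be separable. In this way the corollary becomes essentially a formal consequence of three earlier results: Corollary~\ref{c:Lu3}, Lemma~\ref{l:ws}, and Corollary~\ref{c:sepun}.

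The ``only if'' direction requires no set-theoretic assumption and no $\w$-narrowness at all. If $\Lc_u(X)$ has a local $\mathfrak G$-base, then by Corollary~\ref{c:Lu3} (whose equivalent condition~(2) explicitly demands that $\U(X)$ possess a $\mathfrak G$-base) the uniformity $\U(X)$ has a $\mathfrak G$-base. So this half is immediate.

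For the ``if'' direction I would proceed as follows. Assume that $\U(X)$ has a $\mathfrak G$-base. Since $X$ is $\w$-narrow and we are working under $\w_1<\mathfrak b$, Lemma~\ref{l:ws} applies to the uniformity $\U=\U(X)$, which generates a Tychonoff topology on $X$, and yields that $X$ is weakly cosmic. By the properties of weakly cosmic spaces recalled just before Lemma~\ref{l:ws}, every weakly cosmic space is hereditarily separable, and in particular separable. Thus $X$ is a separable uniform space whose uniformity $\U(X)$ has a $\mathfrak G$-base, and Corollary~\ref{c:sepun} then gives at once that $\Lc_u(X)$ has a local $\mathfrak G$-base. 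Combining the two directions completes the proof.

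The only genuinely non-formal ingredient is the passage from the combined set-theoretic and uniform hypotheses to separability, which is exactly the content of Lemma~\ref{l:ws}; once separability is in hand, the statement collapses to Corollary~\ref{c:sepun}. I therefore expect no serious obstacle here. The single point worth checking carefully is that the weakly-cosmic conclusion of Lemma~\ref{l:ws} indeed delivers topological separability of $X$ (so that ``separable uniform space'' in Corollary~\ref{c:sepun} applies), but this is guaranteed by the hereditary separability of weakly cosmic spaces quoted in Section~\ref{s:Set}.
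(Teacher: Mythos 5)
Your proof is correct and follows essentially the same route as the paper: the ``only if'' direction from Corollary~\ref{c:Lu3}, and the ``if'' direction by deducing separability from weak cosmicity under $\w_1<\mathfrak b$ and then invoking the separable case. The only (cosmetic) differences are that the paper cites Theorem~\ref{t:ws} where your direct appeal to Lemma~\ref{l:ws} suffices, and that the paper unwinds the reduction $\w^\w\succcurlyeq\Tau_0(\Lc_u(X))$ via Theorem~\ref{t:u} instead of quoting Corollary~\ref{c:sepun}, which encapsulates exactly that computation.
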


\begin{proof} The ``only if'' part follows from Corollary~\ref{c:Lu3}. To prove the ``if'' part, assume that the uniformity $\U(X)$ has a $\mathfrak G$-base. By Theorem~\ref{t:ws}, the space $X$ is weakly cosmic and hence separable. Applying Theorem~\ref{t:u}, we get the reduction $\w^\w\cong \w^\w\times(\w^\w)^\w\succcurlyeq \w^\w\times \U(X)^\w\succcurlyeq \Tau_0(\Lc_u(X))$, which means that the space $\Lc_u(X)$ has a local $\mathfrak G$-base.
\end{proof}

\section{Free topological vector spaces over uniform spaces}\label{s:Lin}

For a uniform space $X$ its {\em free topological vector space} is a pair $(\Lin_u(X),\delta_X)$ consisting of a topological vector space $\Lin_u(X)$ and a uniformly continuous map $\delta_X:X\to \Lin_u(X)$ such that for every uniformly continuous map $f:X\to Y$ into a  topological vector space $Y$ there exists a continuous linear operator $\bar f:\Lin_u(X)\to Y$ such that $\bar f\circ\delta_X=f$.

Observe that for a topological space $X$ its free topological vector space $(\Lin(X),\delta_X)$ coincides with the free  topological vector space $(\Lin_u(X),\delta_X)$ of the space $X$ endowed with the finest uniformity $\U(X)$.

Given a uniform space $X$ we shall identify the free topological vector space $\Lin_u(X)$ of $X$ with the linear hull $L(X)$ of the set $\delta(X)=\{\delta_x:x\in X\}\subset C^*_u(X)$, endowed with the strongest topology turning $L(X)$ into a topological vector space and making the map $\delta:X\to L(X)$, $\delta:x\mapsto\delta_x$, uniformly continuous.

The topology of the space $\Lin_u(X)$ can be described as follows.
For a function $\varphi(x) \in \IR^X$ denote by $\hat{\varphi}:X\to[1,\infty)$ the function defined by $\hat\varphi(x) = \max\{1,\varphi(x)\}$ for $x\in X$.
 Given a pair  $\big((U_n)_{n\in\w},(\varphi_n)_{n\in\w}\big)\in\U(X)^\w\times \IR^X$, consider the sets
$$
\begin{aligned}
&\sum_{n\in\w}U_n=\bigcup_{m\in\w}\Big\{\sum_{n=0}^mt_n(\delta_{x_n}-\delta_{y_n}):|t_n|\le 1,\;(x_n,y_n)\in U_n\mbox{ for all }n\le m\Big\}\mbox{ and }\\
&\sum_{n\in\w}\tfrac1{\varphi_n} X=\bigcup_{m\in\w}\Big\{\sum_{n=0}^mt_n\delta_{x_n}:x_n\in X,\;
|t_n|\le
1/\hat{\varphi}_n(x)
\Big\}.
\end{aligned}
$$

We recall that for a uniform space $X$ by $C_\w(X)$ we denote the subset of $\IR^X$ consisting of $\w$-continuous functions $f:X\to\IR$.
A subset $D\subset \IR^X$ is called {\em directed} if for any functions $f,g\in D$ there is a function $h\in D$ such that $h\ge\max\{f,g\}$. We shall say that $D\subset \IR^X$ {\em dominates} $C_\w(X)$ if for every function $f\in C_\w(X)$ there exists a function $g\in D$ with $g\ge f$.

\begin{theorem}\label{t:lin-top} Let $X$ be a uniform space and $D\subset\IR^X$ be a directed subset dominating the set $C_\w(X)$ of $\w$-continuous functions on $X$. The family
$$\mathcal B=\Big\{\sum_{n\in\w}U_n+\sum_{n\in\w}\tfrac1{\varphi_n} X:(\U_n)_{n\in\w}\in\U(X)^\w,\;(\varphi_n)_{n\in\w}\in D^\w\Big\}$$is a neighborhood base at zero of the topology of the free topological vector space $\Lin_u(X)$. Consequently, $\U(X)^\w\times D^\w\succcurlyeq \Tau_0(\Lin_u(X))$.
\end{theorem}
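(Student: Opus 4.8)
The plan is to show that $\mathcal B$ is a neighbourhood base at $0$ for the free topology $\tau_{\Lin}$ on $L(X):=\Lin_u(X)$ in three steps. First I would check that $\mathcal B$ is a base of \emph{some} vector topology $\tau$ on $L(X)$. Second I would verify that $\delta\colon X\to(L(X),\tau)$ is uniformly continuous; since $\tau_{\Lin}$ is by definition the strongest vector topology making $\delta$ uniformly continuous, this yields $\tau\subseteq\tau_{\Lin}$. Third, and this is the substantial part, I would show that every vector topology $\tau'$ on $L(X)$ making $\delta$ uniformly continuous is coarser than $\tau$, i.e.\ each $\tau'$-neighbourhood of $0$ contains a member of $\mathcal B$; taking the supremum over all such $\tau'$ gives $\tau_{\Lin}\subseteq\tau$, hence $\tau=\tau_{\Lin}$ and $\mathcal B$ is a base. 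The final reduction $\U(X)^\w\times D^\w\succcurlyeq\Tau_0(\Lin_u(X))$ then follows, because the assignment $((U_n)_{n\in\w},(\varphi_n)_{n\in\w})\mapsto\sum_{n\in\w}U_n+\sum_{n\in\w}\tfrac1{\varphi_n}X$ is monotone (shrinking the $U_n$ in $\U(X)$ and enlarging the $\varphi_n$ both shrink the neighbourhood) and cofinal (which is exactly the base property just proved).

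For the first two steps the verifications are routine. Each set $B=\sum_n U_n+\sum_n\tfrac1{\varphi_n}X\in\mathcal B$ is balanced, being a sum of two balanced sets, and absorbing, since for any finite combination $v=\sum_{i=1}^{k}c_i\delta_{z_i}$ one has $\tfrac1t v\in\sum_n\tfrac1{\varphi_n}X\subseteq B$ once $t$ is large (placing the $k$ summands in $k$ distinct slots). The family $\mathcal B$ is a filter base: given data $((U_n),(\varphi_n))$ and $((U_n'),(\varphi_n'))$, the entourages $U_n\cap U_n'$ together with functions $\psi_n\in D$ chosen $\ge\max\{\varphi_n,\varphi_n'\}$ (available because $D$ is directed) produce a member contained in both. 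The only slightly delicate point is the halving condition $B'+B'\subseteq B$, which I would obtain by an even/odd interleaving: for $B$ with data $((U_n),(\varphi_n))$ put $V_n=U_{2n}\cap U_{2n+1}$ and pick $\psi_n\in D$ with $\psi_n\ge\max\{\varphi_{2n},\varphi_{2n+1}\}$. Then the $n$-th difference atom of the first (resp.\ second) copy of $B'$ sits in slot $2n$ (resp.\ $2n+1$) of $\sum_n U_n$, and likewise for the scalar atoms, so that $B'+B'\subseteq B$; unused slots are filled with zero terms, legitimate since each $U_n$ contains the diagonal. Uniform continuity of $\delta$ is immediate: for $B$ as above, $(x,y)\in U_0$ gives $\delta_x-\delta_y\in\sum_n U_n\subseteq B$.

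The heart of the argument is the third step. Fix a vector topology $\tau'$ making $\delta$ uniformly continuous and a $\tau'$-neighbourhood $W$ of $0$, and choose balanced $\tau'$-neighbourhoods $W_0\supseteq W_1\supseteq\cdots$ with $W_0\subseteq W$ and $W_{k+1}+W_{k+1}\subseteq W_k$. I would feed the $n$-th difference atom into $W_{2n+1}$ and the $n$-th scalar atom into $W_{2n+2}$; a standard telescoping then shows that any element of $\sum_n U_n+\sum_n\tfrac1{\varphi_n}X$, being a finite sum of atoms lying in pairwise distinct $W_k$ with $k\ge1$, belongs to $W_0\subseteq W$. Uniform continuity of $\delta$ supplies entourages $U_n$ with $(x,y)\in U_n\Rightarrow\delta_x-\delta_y\in W_{2n+1}$, handling the difference atoms (balancedness absorbs the coefficients $|t_n|\le1$). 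For the scalar atoms I must find, for each $n$ and $W':=W_{2n+2}$, a function $\varphi_n\in D$ with $\tfrac1{\hat\varphi_n(z)}\delta_z\in W'$ for all $z\in X$. Since $D$ dominates $C_\w(X)$ and any majorant of a working function still works (by balancedness), this reduces to the following claim, which is the crux of the whole theorem.

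\emph{Crux.} For every balanced $\tau'$-neighbourhood $W'$ of $0$ there is an $\w$-continuous $h\colon X\to[1,\infty)$ with $\tfrac1{h(z)}\delta_z\in W'$ for all $z\in X$. To prove it I would first run a Birkhoff--Kakutani construction on a countable chain $G_0=W'\supseteq G_1\supseteq\cdots$ of balanced neighbourhoods with $G_{j+1}+G_{j+1}\subseteq G_j$, obtaining a continuous functional $q$ on $(L(X),\tau')$ with $\{q<1\}\subseteq W'$, $q(u+v)\le q(u)+q(v)$ and $q(tv)\le q(v)$ for $|t|\le1$. Pulling back along $\delta$, uniform continuity makes the whole family $\psi_k(z):=q(\tfrac1k\delta_z)$ \emph{equi}-uniformly continuous (a single entourage controls the oscillation of every $\psi_k$ at once, because $|1/k|\le1$), while $\psi_k(z)\to0$ as $k\to\infty$. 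The desired $h$ is then extracted as a threshold function such as $h(z)=\min\{k:\,\sup_{k'\ge k}\psi_{k'}(z)<\tfrac12\}$, which satisfies $\tfrac1{h(z)}\delta_z\in W'$. I expect the main obstacle to be precisely the verification that $h$ (or a suitable majorant of it) is \emph{$\w$-continuous}: the threshold can jump under small perturbations, and one must exploit the per-point choice of entourage allowed in the definition of $\w$-continuity, applied to the countable equicontinuous family $\{\psi_k\}$, in order to control it. This is exactly the place where $\w$-continuity, rather than plain uniform continuity, is forced, which explains why the correct hypothesis involves $C_\w(X)$; once the claim is established, domination by $D$ finishes the proof.
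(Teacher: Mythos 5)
Your proposal is correct, and its skeleton coincides with the paper's proof: the same family $\mathcal B$, the same even/odd interleaving trick for the halving $V_Q+V_Q\subset V_P$, uniform continuity of $\delta$ for one inclusion of topologies (your ``supremum over all $\tau'$'' framing is just the universal property the paper invokes), and a Birkhoff--Kakutani-type invariant functional for the converse inclusion --- the paper takes, citing Rolewicz, an invariant continuous pseudometric $d$ on $\Lin_u(X)$ with $\{u:d(u,0)<1\}\subset W$ and $d(tu,0)\le d(u,0)$ for $|t|\le1$, which is your $q$. The genuine divergence is at your ``crux'', and there the paper's route is slicker: it never forms a threshold function on $X$. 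Instead, using paracompactness of the pseudometric space $(\Lin_u(X),d)$, it builds a $d$-continuous scale $\psi_n:\Lin_u(X)\to[1,\infty)$ with $d(tu,0)<2^{-n-2}$ whenever $|t|\le1/\psi_n(u)$, and then observes that $\psi_n\circ\delta$ is automatically $\w$-continuous, since a uniformly continuous map followed by a map continuous on a pseudometric space is $\w$-continuous (the countable entourage family being the $\delta$-preimages of the $d$-entourages $[d]_{<2^{-k}}$); this is exactly where $C_\w(X)$ enters, with no delicate verification left over. Your downstairs approach via the equi-uniformly continuous, pointwise decreasing family $\psi_k(z)=q(\tfrac1k\delta_z)$ also closes, but note that your literal $h$ (the $\sup$ is redundant, as $\psi_k$ decreases in $k$) is integer-valued and only ``upper'' $\w$-semicontinuous --- it drops under perturbation and is typically not even continuous --- so the parenthetical majorant is doing all the work. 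It can be produced by smoothing: take $h'=1+\sum_{k\ge1}\theta\circ\psi_k$ for a fixed Lipschitz cutoff $\theta$ equal to $1$ on $[\tfrac14,\infty)$ and $0$ below a smaller threshold; then $h'\ge h$, one fixed entourage (making the $q$-oscillation of $\delta$ small) kills all but finitely many terms of the series near each point, and per-point choices from a countable entourage family control the remaining finite sum --- precisely the per-point latitude of $\w$-continuity you invoke, after which domination by $D$ and balancedness finish as you say. In short: your version is more self-contained (no appeal to paracompactness of pseudometric spaces) and makes visible why a uniformly continuous scale cannot be expected, at the cost of carrying out the smoothing by hand; the paper's pull-back construction buys the $\w$-continuity for free from the composition.
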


\begin{proof} Given a pair $P=\big((U_n)_{n\in\w},(\varphi_n)_{n\in\w}\big)\in\U(X)^\w\times D^\w$ denote the set $$\sum_{n\in\w}U_n+\sum_{n\in\w}\tfrac1{\varphi_n} X$$ by $V_P$. Let $\tau$ be the topology on $L(X)$ consisting of the sets $W\subset L(X)$ such that for any $w\in W$ there is a pair $P\in\U(X)^\w\times D^\w$ such that $w+V_P\subset W$. The definition of the set $V_P$ implies that it belongs to the topology $\tau$.

We claim that the topology $\tau$ turns the vector space $L(X)$ into a topological vector space. We need to check the continuity of the addition $L(X)\times L(X)\to L(X)$, $(u,v)\mapsto u+v$, and the multiplication $\IR\times L(X)\to L(X)$, $(t,u)\mapsto tu$, with respect to the topology $\tau$. Since the topology $\tau$ is invariant under shifts, it suffices to check the continuity of the addition at zero. Given a neighborhood $W\in\tau$ of zero, find a pair $P=\big((U_n)_{n\in\w},(\varphi_n)_{n\in\w}\big)\in\U(X)^\w\times D^\w$ such that such that $V_P\subset W$. For every $n\in\w$ consider the entourage $U'_n=U_{2n}\cap U_{2n+1}\in \U(X)$ and choose a function $\psi_n\in D$ such that $\psi_n\ge \max\{\varphi_{2n},\varphi_{2n+1}\}$. We claim that for the pair $Q=\big((U_n')_{n\in\w},(\psi_n)_{n\in\w}\big)$ we get $V_Q+V_Q\subset V_P$. Indeed,
\begin{multline*}
V_Q+V_Q=\sum_{n\in\w}\frac1{\psi_n} X+\sum_{n\in\w}U_n'+\sum_{n\in\w}\frac1{\psi_n} X+\sum_{n\in\w}U_n'\subset\\
\subset\sum_{n\in\w}\frac1{\varphi_{2n}}X+\sum_{n\in\w}\frac1{\varphi_{2n+1}}X
+\sum_{n\in\w}U_{2n}+\sum_{n\in\w}U_{2n+1}=\sum_{n\in\w}\frac1{\varphi_n} X+\sum_{n\in\w}U_n=V_P.
\end{multline*}
The definition of the set $V_Q$ implies that it is open in the topology $\tau$. So, $V_Q$ is an open neighborhood of zero such that $V_Q+V_Q\subset V_P\subset W$, which proves the continuity of the addition at zero.

Next, we prove the continuity of the multiplication by a scalar.
Fix any pair $(t,u)\in\IR\times L(X)$ and any neighborhood $W\in\tau$ of the product $tu$.  Find $m\in\w$ such that $|t|\le m$ and $u=\sum_{n=0}^mt_n\delta_{x_n}$ for some real numbers $t_0,\dots,t_n$ and some points $x_0,\dots,x_n\in X$. By the continuity of the addition at zero, there exists a neighborhood $W_0\in\tau$ of zero such that the set $\sum_{n=0}^{m+1}W_0=\big\{\sum_{n=0}^{m+1}w_n:w_0,\dots,w_{m+1}\in W_0\big\}$ is contained in the neighborhood $W-tu$ of zero. Find a pair $Q=\big((U_n')_{n\in\w},(\psi_n)_{n\in\w}\big)\in\U(X)^\w\times D^\w$ such that $V_Q\subset W_0$. Choose $\e\in(0,1]$ so small that $\e t_n\le 1/\hat{\psi}_n(x_n)$ for all $n\le m$, which implies that $[-\e,\e]\cdot u\subset V_Q$. We claim that $(t-\e,t+\e)\cdot (u+V_Q)\subset W$. Indeed, take any pair $(t',u')\in(t-\e,t+\e)\times (u+V_Q)$ and observe that
\begin{multline*}
t'u'-tu=(t{+}t'{-}t)(u{+}u'{-}u)-tu=t(u'{-}u)+(t'{-}t)u+(t'{-}t)(u'{-}u)\in\\
\in tV_Q+[-\e,\e]u+[-1,1]V_Q\subset mV_Q+V_Q+V_Q\subset \sum_{n=0}^{m+1}W_0\subset W-tu
\end{multline*}
and hence $t'u'\in W$.

Therefore the topology $\tau$ turns the linear hull $L(X)$ of the set $\{\delta_x:x\in X\}$ into a topological vector space. The definition of the topology $\tau$ implies that the map $\delta:X\to L(X)$, $\delta:x\mapsto\delta_x$, is uniformly continuous. Now the definition of the free  topological vector space $\Lin_u(X)$ guarantees that the identity map $\Lin_u(X)\to L(X)$ is continuous (with respect to the topology $\tau$). We claim that this map is a homeomorphism.

Given any neighborhood $W\subset \Lin_u(X)$ of zero, we should find a pair $P\in\U(X)^\w\times D^\w$ such that $V_P\subset W$.
Consider the constant map $\mathbf 1:X\to\{1\}\subset\IR$ and the induced linear continuous operator $\bar{\mathbf 1}:\Lin_u(X)\to\IR$. Replacing $W$ by a smaller neighborhood of zero, we  can assume that $\bar{\mathbf 1}(W)\subset(-1,1)$. By an argument similar to that of \cite[\S 1.2]{Rolewicz}, we can find an invariant continuous pseudometric $d$ on $\Lin_u(X)$ such that $\{x\in \Lin_u(X):d(x,0)<1\}\subset W$ and $d(tx,0)\le d(x,0)$ for any $t\in[-1,1]$ and $x\in X$. For every $n\in\w$ consider the entourage $U_n=\{(x,y)\in X\times X:d(x,y)<2^{-n-2}\}$. Using the paracompactness of the pseudometric space $(\Lin_u(X),d)$, for every $n\in\w$ it is easy to construct a $d$-continuous function $\psi_n:\Lin_u(X)\to[1,\infty)$ such that $d(tx,0)<2^{-n-2}$ for every $x\in X$ and $t\in\IR$ with $|t|\le 1/\varphi_n(x)$. The uniform continuity of the map $\delta:X\to \Lin_u(X)$ and the $d$-continuity of the map $\psi_n$ implies the $\w$-continuity of the composition $\psi_n\circ\delta:X\to\IR$. Finally, choose a function $\varphi_n\in D$ such that $\varphi_n\ge \psi_n\circ\delta$.
It follows that the pair $P=\big((U_n)_{n\in\w},(\varphi_n)_{n\in\w}\big)$ belongs to the poset $\U(X)^\w\times D^\w$. Using the triangle inequality it is easy to check that $V_P\subset W$.

Therefore the correspondence $\U(X)^\w\times D^\w\to \Tau_0(\Lin_u(X))$, $P\mapsto V_P$, is monotone and cofinal, yielding the reduction $\U(X)^\w\times D^\w\succcurlyeq \Tau_0(\Lin_u(X))$.
\end{proof}

\begin{theorem}\label{t:Lin+Lc-reduct} For any uniform space $X$ and any directed subset $D\subset \IR^X$ dominating $C_\w(X)$ we get the reductions
$$\U(X)^\w\times D^\w\succcurlyeq \Tau_0(\Lin_u(X))\succcurlyeq \Tau_0(\Lc_u(X))\cong \U(X)^\w\times C_u(X).$$
\end{theorem}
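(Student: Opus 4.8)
The assertion is a chain of three reductions, and its two outer links are already at our disposal. The first reduction $\U(X)^\w\times D^\w\succcurlyeq\Tau_0(\Lin_u(X))$ is exactly the conclusion of Theorem~\ref{t:lin-top}, which applies because $D$ is a directed subset of $\IR^X$ dominating $C_\w(X)$. The last equivalence $\Tau_0(\Lc_u(X))\cong\U(X)^\w\times C_u(X)$ is one of the links of the chain established in Theorem~\ref{t:u}. Consequently the entire content of the statement is concentrated in the middle reduction $\Tau_0(\Lin_u(X))\succcurlyeq\Tau_0(\Lc_u(X))$, and this is what I would actually prove.

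The plan for the middle reduction is to build an explicit monotone cofinal map via absolutely convex hulls, after identifying $\Lc_u(X)$ as the locally convex modification of $\Lin_u(X)$. First I would observe that the two free objects share the same underlying vector space $L(X)$, and that the identity map $j\colon\Lin_u(X)\to\Lc_u(X)$ is a continuous linear bijection: since $\delta\colon X\to\Lc_u(X)$ is uniformly continuous, the universal property of the free topological vector space $\Lin_u(X)$ produces a continuous linear operator extending $\delta$, which can only be $j$. Thus the topology of $\Lin_u(X)$ is finer than that of $\Lc_u(X)$, so $\Lc_u(X)$ is a locally convex vector topology on $L(X)$ coarser than the topology of $\Lin_u(X)$. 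Running the universal property in the reverse direction --- the locally convex modification of $\Lin_u(X)$ is a locally convex space into which $\delta$ remains uniformly continuous, so $\Lc_u(X)$ maps continuously onto it by the identity --- shows that $\Lc_u(X)$ is precisely the locally convex modification of $\Lin_u(X)$. Equivalently, the family of all absolutely convex neighborhoods of zero in $\Lin_u(X)$ is a neighborhood base at zero for $\Lc_u(X)$.

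With this description of the topology of $\Lc_u(X)$ in hand, I would define $f\colon\Tau_0(\Lin_u(X))\to\Tau_0(\Lc_u(X))$ by sending a neighborhood $W$ of zero in $\Lin_u(X)$ to its absolutely convex hull $\mathrm{aco}(W)$. Since $W$ contains an open neighborhood of zero and $\mathrm{aco}(W)\supset W$ is absolutely convex, the set $\mathrm{aco}(W)$ is an absolutely convex neighborhood of zero in $\Lin_u(X)$, hence a neighborhood of zero in $\Lc_u(X)$; so $f$ is well defined. It is monotone because the absolutely convex hull preserves inclusions and $\Tau_0(\cdot)$ carries the order of converse inclusion. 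For cofinality, given $V\in\Tau_0(\Lc_u(X))$ I would pick an absolutely convex neighborhood $V_0$ of zero in $\Lin_u(X)$ with $V_0\subset V$ (possible by the base just described), and note that $W:=V_0$ satisfies $f(W)=\mathrm{aco}(V_0)=V_0\subset V$. Hence $f$ is monotone and cofinal, yielding $\Tau_0(\Lin_u(X))\succcurlyeq\Tau_0(\Lc_u(X))$.

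Concatenating the three links then gives the asserted chain of reductions. The only non-routine ingredient is the identification of $\Lc_u(X)$ with the locally convex modification of $\Lin_u(X)$; once that is established, the absolutely convex hull furnishes the reduction almost for free, and the remaining verifications are mere bookkeeping with converse inclusion. I therefore expect this identification --- rather than the construction of $f$ --- to be the main obstacle.
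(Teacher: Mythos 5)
Your proof is correct and takes essentially the same route as the paper's: the outer links are quoted from Theorems~\ref{t:lin-top} and \ref{t:u}, and the middle reduction $\Tau_0(\Lin_u(X))\succcurlyeq\Tau_0(\Lc_u(X))$ is obtained by the (absolutely) convex hull map, exactly as in the paper. The only difference is that where the paper cites Bourbaki (Proposition 5, TVS II.27) for the cofinality of the hull map, you prove the underlying fact --- that $\Lc_u(X)$ is the locally convex modification of $\Lin_u(X)$, so absolutely convex zero-neighborhoods of $\Lin_u(X)$ form a base of $\Tau_0(\Lc_u(X))$ --- directly from the two universal properties, which is a correct unpacking of that citation.
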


\begin{proof} The reduction $\U(X)^\w\times D^\w\succcurlyeq \Tau_0(\Lin_u(X))$ was proved in Theorem~\ref{t:lin-top}.
To see that $\Tau_0(\Lin_u(X))\succcurlyeq \Tau_0(\Lc_u(X))$, consider the monotone map $\Tau_0(\Lin_u(X))\to\Tau_0(\Lc_u(X))$ assigning to each neighborhood $U\in \Tau_0(\Lin_u(X))$ of zero its convex hull $\conv(U)$. This map is clearly well-defined.
Its cofinality follows from \cite[Proposition 5, TVS II.27]{Bourbaki}.
 The final reduction  $\Tau_0(\Lc_u(X))\cong \U(X)^\w\times C_u(X)$ was proved in Theorem~\ref{t:u}.
 \end{proof}

\begin{corollary}\label{c:Lin-dom} Let $X$ be a uniform space
whose uniformity has a $\mathfrak G$-base. If the set $C_\w(X)$ is $\w^\w$-dominated in $\IR^X$, then the free topological vector space $\Lin_u(X)$ has a local $\mathfrak G$-base.
\end{corollary}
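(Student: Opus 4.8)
The plan is to prove $\w^\w\succcurlyeq\Tau_0(\Lin_u(X))$; since $\Lin_u(X)$ is a topological vector space, hence a topological group, a $\mathfrak G$-base at zero is the same as a local $\mathfrak G$-base, so this is exactly what is claimed. The entire argument will be funneled through Theorem~\ref{t:Lin+Lc-reduct}, whose first reduction $\U(X)^\w\times D^\w\succcurlyeq\Tau_0(\Lin_u(X))$ is available for \emph{any} directed subset $D\subset\IR^X$ dominating $C_\w(X)$. Thus it suffices to manufacture one such $D$ satisfying $\w^\w\succcurlyeq D$, and then run the poset calculus.

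First I would extract $D$ from the dominance hypothesis. By assumption $C_\w(X)$ is $\w^\w$-dominated in $\IR^X$, so there is a monotone map $\varphi\colon\w^\w\to\IR^X$, $\alpha\mapsto\varphi_\alpha$, such that every $f\in C_\w(X)$ satisfies $f\le\varphi_\alpha$ for some $\alpha\in\w^\w$. Set $D=\{\varphi_\alpha:\alpha\in\w^\w\}$. Then $D$ dominates $C_\w(X)$ by construction, and it is directed: for $\alpha,\beta\in\w^\w$ their pointwise maximum $\gamma=\max\{\alpha,\beta\}\in\w^\w$ gives $\varphi_\gamma\ge\varphi_\alpha$ and $\varphi_\gamma\ge\varphi_\beta$ by monotonicity. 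Finally $\varphi$ itself, being monotone and surjective onto $D$, is a monotone cofinal map, so $\w^\w\succcurlyeq D$.

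Next comes the poset arithmetic. The hypothesis that $\U(X)$ has a $\mathfrak G$-base means $\w^\w\succcurlyeq\U(X)$, and we have just shown $\w^\w\succcurlyeq D$. Applying each of these coordinatewise yields $(\w^\w)^\w\succcurlyeq\U(X)^\w$ and $(\w^\w)^\w\succcurlyeq D^\w$, and taking the product of the two monotone cofinal maps gives $(\w^\w)^\w\times(\w^\w)^\w\succcurlyeq\U(X)^\w\times D^\w$. Using the standard isomorphisms $(\w^\w)^\w\cong\w^\w$ and $\w^\w\times\w^\w\cong\w^\w$ (induced by any bijection of the countable index sets, since the orders are pointwise), the left-hand side is $\cong\w^\w$, whence $\w^\w\succcurlyeq\U(X)^\w\times D^\w$. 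Composing with the reduction $\U(X)^\w\times D^\w\succcurlyeq\Tau_0(\Lin_u(X))$ of Theorem~\ref{t:Lin+Lc-reduct} and invoking transitivity of $\succcurlyeq$ delivers $\w^\w\succcurlyeq\Tau_0(\Lin_u(X))$, as required.

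There is no genuine obstacle here, because all the analytic content has already been absorbed into Theorem~\ref{t:lin-top} and Theorem~\ref{t:Lin+Lc-reduct}. The only points requiring care are routine: (i) verifying that $D$ is directed and that $\w^\w\succcurlyeq D$, and (ii) checking that $\succcurlyeq$ and $\cong$ are transitive and are preserved under finite products and countable powers — each immediate from the definition of a monotone cofinal map. I would also state explicitly that the passage from a $\mathfrak G$-base at zero to a local $\mathfrak G$-base uses nothing more than the topological homogeneity of $\Lin_u(X)$.
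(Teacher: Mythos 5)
Your proof is correct and takes essentially the same route as the paper's: both extract from the dominance hypothesis a directed set $D\subset\IR^X$ dominating $C_\w(X)$ with $\w^\w\succcurlyeq D$, and then combine the reduction $\U(X)^\w\times D^\w\succcurlyeq\Tau_0(\Lin_u(X))$ of Theorem~\ref{t:Lin+Lc-reduct} with the poset isomorphism $\w^\w\cong(\w^\w)^\w\times(\w^\w)^\w$. The extra verifications you spell out (directedness of $D$ via pointwise maxima, cofinality of $\varphi$ onto its image, and the passage from a $\mathfrak G$-base at zero to a local $\mathfrak G$-base by homogeneity) are exactly the routine details the paper leaves implicit.
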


\begin{proof} If the set $C_\w(X)$ is $\w^\w$-dominated in $\IR^X$, then we can find a set $D=\{f_\alpha\}_{\alpha\in\w^\w}\subset\IR^X$ which dominates $C_\w(X)$ and such that $f_\alpha\le f_\beta$ for all $\alpha\le\beta$ in $\w^\w$. It follows that $D$ is a directed set with $\w^\w\succcurlyeq D$. Applying Theorem~\ref{t:Lin+Lc-reduct}, we get the reductions
$$\w^\w\cong (\w^\w)^\w\times(\w^\w)^\w\succcurlyeq D^\w\times \U(X)^\w\succcurlyeq\Tau_0(\Lin_u(X)),$$witnessing that the space $\Lin_u(X)$ has a local $\mathfrak G$-base.
\end{proof}

\section{Uniform spaces with a $\mathfrak G$-base}\label{s:U}

In this section we establish some topological properties of uniform spaces whose uniformity has a $\mathfrak G$-base. The main result of this section is the Metrizability Theorem~\ref{t:metr}.
 We start with the following fact proved in \cite{LPT}.

\begin{proposition}\label{p:sigma'} The finest uniformity $\U(X)$ of a metrizable topological space $X$ has a $\mathfrak G$-base if the set $X'$ of non-isolated points of $X$ is $\sigma$-compact.
\end{proposition}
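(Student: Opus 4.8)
The plan is to construct an explicit monotone cofinal map $\w^\w\to\U(X)$, thereby exhibiting a $\mathfrak G$-base of the finest uniformity. First I would fix a metric $\rho\le 1$ generating the topology of $X$ and record that, since the isolated points of $X$ form an open set, the set $X'$ of non-isolated points is closed; by hypothesis write $X'=\bigcup_{n\in\w}K_n$ as an increasing union of compact sets. For $\beta\in\w^\w$ put $\delta_n=2^{-\beta(n)}$ and define
$$U_\beta=\Delta_X\cup\bigcup_{n\in\w}\big\{(x,y)\in X\times X:\rho(x,K_n)<\delta_n,\ \rho(y,K_n)<\delta_n,\ \rho(x,y)<\delta_n\big\}.$$
Each $U_\beta$ is symmetric and reflexive, and the assignment $\beta\mapsto U_\beta$ is monotone for the converse-inclusion order on $\U(X)$: if $\beta\le\beta'$ then $\delta_n'\le\delta_n$ pointwise, so $U_{\beta'}\subset U_\beta$. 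The family is also downward directed, by taking coordinatewise maxima.

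Next I would verify that the sets $U_\beta$ generate a uniformity $\V$ on $X$ compatible with the topology. The only nontrivial axiom is the existence of square roots, and this is where the main technical point arises: I would show that $U_{\beta+2}\circ U_{\beta+2}\subset U_\beta$, where $\beta+2$ denotes the shift $n\mapsto\beta(n)+2$. Given $(x,y)\in U_{\beta+2}$ witnessed at level $n$ and $(y,z)\in U_{\beta+2}$ witnessed at level $m$, I would assume $n\le m$ (the other case being symmetric); since the radii are then $\delta_n/4$ and the sets $K_n$ are increasing, the triangle inequality gives $\rho(x,z)<\delta_n/2$, $\rho(x,K_n)<\delta_n$ and $\rho(z,K_n)<\delta_n/2$, so that $(x,z)$ is again witnessed at level $n$. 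The extra factor in the radius is exactly what absorbs the mismatch between different levels, and this is the step I expect to be the most delicate to state cleanly. Compatibility with the topology would then follow by inspecting the balls $U_\beta[x]$: for an isolated point $x$ one has $\rho(x,K_n)\ge\rho(x,X')>0$, so choosing $\beta$ with $\delta_0\le\rho(x,X')$ forces $U_\beta[x]=\{x\}$; for $p\in X'\cap K_m$ one has $B(p,\delta_m)\subset U_\beta[p]\subset B(p,\delta_0)$, so the balls $U_\beta[p]$ form a neighborhood base at $p$.

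Since $\U(X)$ is the finest uniformity compatible with the topology, the inclusion $\V\subset\U(X)$ is automatic, so every $U_\beta$ is a genuine entourage of $\U(X)$. It then remains to prove cofinality: every basic entourage $[d]_{<1}$ of $\U(X)$, with $d\le 1$ a continuous pseudometric, must contain some $U_\beta$. For this I would use compactness of $K_n$ together with $d(p,p)=0$ to produce, for each $n$, a radius $\delta_n>0$ such that $\rho(x,K_n)<\delta_n$ and $\rho(x,y)<\delta_n$ imply $d(x,y)<1$, obtained by covering $K_n$ with finitely many $\rho$-balls on which $d$ oscillates by less than $1$. Choosing $\beta$ with $2^{-\beta(n)}\le\delta_n$ then yields $U_\beta\subset[d]_{<1}$ directly from the definition. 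Combining monotonicity with this cofinality shows that $(U_\beta)_{\beta\in\w^\w}$ is a $\mathfrak G$-base of $\U(X)$. The crux of the whole argument is that $\sigma$-compactness of $X'$ lets a single function $\beta\in\w^\w$ encode the countably many scales $(\delta_n)_n$ needed near the compact pieces $K_n$, while the isolated points far from $X'$ are handled for free by the diagonal $\Delta_X$.
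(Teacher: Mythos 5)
The paper offers no proof of this proposition at all: it is quoted as a known fact from \cite{LPT}, so there is no in-paper argument to compare against, and your explicit construction of a monotone cofinal family $(U_\beta)_{\beta\in\w^\w}$ is a legitimate self-contained route. Its architecture is sound: attach scales $\delta_n=2^{-\beta(n)}$ to the compact pieces $K_n$, verify that the sets $U_\beta$ form a base of a uniformity compatible with the topology (whence $U_\beta\in\U(X)$ automatically, since $\U(X)$ is the finest compatible uniformity), and then get cofinality from compactness of each $K_n$. The cofinality step is correct and can be made precise exactly as you indicate: since $d$ is a pseudometric, for each $p\in K_n$ pick $r_p>0$ with $d(p,u)<1/2$ for all $u\in B_\rho(p,3r_p)$, take a finite subcover of $K_n$ by the balls $B_\rho(p_i,r_{p_i})$, and set $\delta_n=\min_i r_{p_i}$; the triangle inequality for $d$ then gives $d(x,y)<1$ whenever $\rho(x,K_n)<\delta_n$ and $\rho(x,y)<\delta_n$.

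The one step that fails as written is precisely the one you flagged as delicate. In proving $U_{\beta+2}\circ U_{\beta+2}\subset U_\beta$ you split the cases by the \emph{indices}, assuming $n\le m$, and then assert that ``the radii are then $\delta_n/4$'' and that $\rho(z,K_n)<\delta_n/2$. Neither follows: $\beta$ need not be monotone, so $n\le m$ gives no comparison between $\delta_n=2^{-\beta(n)}$ and $\delta_m=2^{-\beta(m)}$ (you may well have $\delta_m\gg\delta_n$ or $\delta_m\ll\delta_n$), and the inclusion $K_n\subset K_m$ points the wrong way --- closeness of $z$ to the \emph{larger} set $K_m$ tells you nothing about $\rho(z,K_n)$. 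The repair is to split by the size of the \emph{radii} instead: since each $U_\beta$ is symmetric, you may assume $\delta_m\le\delta_n$ (otherwise swap the roles of $x$ and $z$), and then $(x,z)$ is witnessed at level $n$, the level with the larger radius:
$$\rho(x,K_n)<\tfrac{\delta_n}4,\qquad \rho(x,z)\le\rho(x,y)+\rho(y,z)<\tfrac{\delta_n}4+\tfrac{\delta_m}4\le\tfrac{\delta_n}2,\qquad \rho(z,K_n)\le\rho(z,x)+\rho(x,K_n)<\tfrac{3\delta_n}4.$$
Note that this corrected argument never uses that the sets $K_n$ increase. Two further cosmetic slips: for an isolated point $x$ you need $\delta_n\le\rho(x,X')$ for \emph{all} $n$, not just $n=0$, to force $U_\beta[x]=\{x\}$ (take $\beta$ constant); and $U_\beta[p]\subset B(p,\sup_n\delta_n)$, which equals $B(p,\delta_0)$ only for monotone $\beta$ --- again the constant functions $\beta$ suffice for the neighborhood base. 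With these repairs your proof is complete and correct.
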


Let $X$ be a uniform space whose uniformity $\U(X)$ has a $\mathfrak G$-base  $(U_\alpha)_{\alpha\in\w^\w}$.

Let $\w^{<\w}=\bigcup_{n\in\w}\w^n$ be the family of finite sequences of finite ordinals. Observe that for every $\alpha\in\w^\w$ and $n\in\w$ the restriction $\alpha|n=(\alpha(0),\dots,\alpha(n-1))$ belongs to $\w^{<\w}$. For a finite sequence $\alpha\in\w^n\subset \w^{<\w}$ let ${\uparrow}\alpha=\{\beta\in\w^\w:\alpha=\beta|n\}\subset\w^\w$ and $$U_\alpha=\bigcap_{\beta\in{\uparrow}\alpha}U_\beta\subset X\times X.$$

\begin{lemma}\label{l:countable} Let $(F_n)_{n\in\w}$, $(E_n)_{n\in\w}$ be two sequences of finite subsets of a uniform space $X$ such that for every entourage $U\in\U(X)$ there exists $k\in\w$ such that $(F_n\times E_n)\cap U\ne\emptyset$ for every $n\ge k$.  If $(U_\alpha)_{\alpha\in\w^\w}$ is a $\mathfrak G$-base of the uniformity $\U(X)$ of $X$, then for every $\alpha\in\w^\w$ there is $k\in\w$ such that  $(F_n\times E_n)\cap U_{\alpha|k}\ne\emptyset$ for every $n\ge k$.
\end{lemma}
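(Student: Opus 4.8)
The plan is to argue by contradiction. Suppose the conclusion fails for some $\alpha\in\w^\w$; then for every $k\in\w$ there is an index $n\ge k$ with $(F_n\times E_n)\cap U_{\alpha|k}=\emptyset$. Using this I will manufacture a single function $\beta\in\w^\w$ whose base entourage $U_\beta$ satisfies $(F_n\times E_n)\cap U_\beta=\emptyset$ for infinitely many $n$. Since $\beta\in\w^\w$, the entourage $U_\beta$ lies in $\U(X)$, so applying the hypothesis to $U=U_\beta$ produces some $k$ with $(F_n\times E_n)\cap U_\beta\ne\emptyset$ for all $n\ge k$; an unbounded set of indices with empty intersection then gives the desired contradiction.

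To build $\beta$ I first handle one index at a time. Fix a strictly increasing sequence $(k_i)_{i\in\w}$, for concreteness $k_i=i$. For each $i$ the failure of the conclusion supplies $n_i\ge k_i$ with $(F_{n_i}\times E_{n_i})\cap U_{\alpha|k_i}=\emptyset$. By definition $U_{\alpha|k_i}=\bigcap_{\gamma\in{\uparrow}(\alpha|k_i)}U_\gamma$, so for each of the finitely many pairs $p\in F_{n_i}\times E_{n_i}$ there is an extension $\gamma_p\in{\uparrow}(\alpha|k_i)$ with $p\notin U_{\gamma_p}$. Taking the coordinatewise maximum $\gamma^{(i)}=\max_{p}\gamma_p$ and invoking the $\mathfrak G$-base monotonicity ($\delta\le\delta'$ implies $U_{\delta'}\subset U_\delta$), I obtain a single $\gamma^{(i)}\in{\uparrow}(\alpha|k_i)$ with $(F_{n_i}\times E_{n_i})\cap U_{\gamma^{(i)}}=\emptyset$; note the max of extensions of $\alpha|k_i$ is again such an extension.

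Finally I fuse the $\gamma^{(i)}$ into one function by setting $\beta(m)=\sup_{i\in\w}\gamma^{(i)}(m)$. Each $\gamma^{(i)}$ extends $\alpha|k_i$, so $\gamma^{(i)}(m)=\alpha(m)$ whenever $k_i>m$; hence at a fixed coordinate $m$ only the finitely many indices $i$ with $k_i\le m$ can contribute a value exceeding $\alpha(m)$, and $\beta(m)$ is a finite maximum. Thus $\beta\in\w^\w$ and $\beta\ge\gamma^{(i)}$ for every $i$, whence $U_\beta\subset U_{\gamma^{(i)}}$ and $(F_{n_i}\times E_{n_i})\cap U_\beta=\emptyset$ for all $i$. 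Since $n_i\ge k_i=i\to\infty$, the set $\{n_i:i\in\w\}$ is infinite, contradicting the hypothesis as explained.

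The one genuinely delicate point is this fusion step. An arbitrary countable family in $\w^\w$ admits only an eventual (that is, $\le^*$) upper bound, and eventual domination would give $U_\beta\subset U_{\gamma^{(i)}}$ only \emph{modulo} finitely many coordinates, which is not enough to force $(F_{n_i}\times E_{n_i})\cap U_\beta=\emptyset$. What rescues the argument is precisely the design of the finite-sequence entourages $U_{\alpha|k}$ as intersections over all extensions of a fixed initial segment: it compels each $\gamma^{(i)}$ to agree with $\alpha$ below $k_i$, so the family $(\gamma^{(i)})_{i\in\w}$ becomes everywhere dominated by the honest pointwise supremum $\beta$. I therefore expect the bookkeeping around initial segments, rather than any uniform-space subtlety, to be where care is required.
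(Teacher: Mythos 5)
Your proof is correct and follows essentially the same route as the paper's: choose, for each $k$, a function extending $\alpha|k$ whose entourage misses $F_{n_k}\times E_{n_k}$ (taking a finite maximum over the pairs), then exploit agreement with $\alpha$ on initial segments to form an honest pointwise upper bound $\beta\in\w^\w$ and contradict the hypothesis with $U=U_\beta$. Your explicit remark on why a mere $\le^*$-bound would not suffice is exactly the point the paper's construction of $\beta(k)=\max\{\beta_i(k):i\le k+1\}$ is designed to handle.
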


\begin{proof} To derive a contradiction, assume that for every $k\in\w$ there is a number $n_k\ge k$ such that $(F_{n_k}\times E_{n_k})\cap U_{\alpha|k}=\emptyset$. By the definition of the entourage $U_{\alpha|k}$, for every pair $(x,y)\in F_{n_k}\times E_{n_k}$ there is a function $\beta_{k,x,y}\in\w^\w$ such that $\beta_{k,x,y}|k=\alpha|k$ and $(x,y)\notin U_{\beta_{k,x,y}}$. Consider the function $\beta_k=\max\{\beta_{k,x,y}:(x,y)\in F_{n_k}\times E_{n_k}\}$ and observe that the inclusion $U_{\beta_k}\subset U_{\beta_{k,x,y}}$ for $(x,y)\in F_{n_k}\times E_{n_k}$ implies that $(F_{n_k}\times E_{n_k})\cap U_{\beta_k}=\emptyset$. It follows that $\beta_k|k=\alpha|k$ and hence $\beta_k(k-1)=\alpha(k-1)$ if $k\ge 1$. Let $\beta\in\w^\w$ be the function defined by $\beta(k)=\max\{\beta_i(k):i\le k+1\}$. We claim that $\beta\ge\beta_k$ for every $k\in\w$. Fix any number $n\in\w$. If $k\le n+1$, then $\beta(n)=\max\{\beta_i(n):i\le n+1\}\ge \beta_k(n)$. If $k>n+1$, then $\beta(n)\ge \beta_{n+1}(n)=\alpha(n)=\beta_k(n)$.
For every $k\in\w$ the inequality $\beta\ge\beta_k$ implies the inclusion $U_\beta\subset U_{\beta_k}$ and hence $(F_{n_k}\times E_{n_k})\cap U_\beta\subset (F_{n_k}\times E_{n_k})\cap U_{\beta_k}=\emptyset$, which contradicts the choice of the sequences $(E_n)_{n\in\w}$ and $(F_n)_{n\in\w}$.
\end{proof}

For a subset $A$ of a topological space $X$ by $\bar A$ and $A^\circ$ we denote the closure and the interior of $A$ in $X$, respectively.
According to \cite {KocSch}, a topological space $X$ is said to have the {\em Reznichenko property} at a point $x\in X$,
if for any subset $A \subset X$ with $x\in \bar A$,
there exists a sequence $(F_n)_{n\in\w}$ of pairwise disjoint finite sets $F_n\subset A$ such that each neighborhood $O_x\subset X$ of $x$ intersects all but finitely many sets $F_n$, $n\in\w$.

We shall say that a topological space $X$ has the {\em strong} ({\em open-}){\em Reznichenko property} at a point $x\in X$ if for any decreasing sequence $(U_n)_{n\in\w}$ of (open) sets in $X$ with $x\in\bigcap_{n\in\w}\bar U_n$ there exists a sequence $(F_n)_{n\in\w}$ of finite sets $F_n\subset U_n$ such that each neighborhood $O_x\subset X$ intersects all but finitely many sets $F_n$, $n\in\w$.

By \cite{BaR}, a (regular) topological space $X$ is first countable at a point $x\in X$ if and only if $X$ has a $\mathfrak G$-base at $x$ and $X$ is strong (open-)Reznichenko at $x$.  More information on strong (open-)Reznichenko spaces can be found in \cite{BaR}.

\begin{lemma}\label{l:neib} Let $X$ be a uniform space and $(U_\alpha)_{\alpha\in\w^\w}$ be a $\mathfrak G$-base of its uniformity. If the space $X$ is strong open-Reznichenko at some point $x\in X$, then for every $\alpha\in\w^\w$ there exists $k\in\w$ such that $\bar U_{\alpha|k}[x]$ is a neighborhood of $x$.
\end{lemma}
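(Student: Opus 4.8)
The plan is to argue by contradiction, feeding a suitable decreasing sequence of open sets into the strong open-Reznichenko property and then invoking Lemma~\ref{l:countable} to reach an absurdity. First I fix $\alpha\in\w^\w$ and suppose, towards a contradiction, that $\overline{U_{\alpha|k}[x]}$ fails to be a neighborhood of $x$ for every $k\in\w$. Recalling that ${\uparrow}(\alpha|k')\subseteq{\uparrow}(\alpha|k)$ whenever $k\le k'$, the entourages satisfy $U_{\alpha|k}\subseteq U_{\alpha|k'}$, so the balls $U_{\alpha|k}[x]$ and their closures increase with $k$. Hence the open sets $W_k:=X\setminus\overline{U_{\alpha|k}[x]}$ form a decreasing sequence. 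Moreover, since $\overline{U_{\alpha|k}[x]}$ is not a neighborhood of $x$, we have $x\notin(\overline{U_{\alpha|k}[x]})^\circ$, and the identity $X\setminus A^\circ=\overline{X\setminus A}$ gives $x\in\overline{W_k}$ for every $k$; thus $x\in\bigcap_{k\in\w}\overline{W_k}$.

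Next I would apply the strong open-Reznichenko property of $X$ at $x$ to this decreasing sequence $(W_k)_{k\in\w}$ of open sets, obtaining a sequence $(F_k)_{k\in\w}$ of finite sets $F_k\subset W_k$ such that every neighborhood $O_x$ of $x$ meets all but finitely many of the $F_k$. I then invoke Lemma~\ref{l:countable} with the constant sequence of singletons $\{x\}$ and the sequence $(F_n)_{n\in\w}$, in this order, so that the singleton occupies the first coordinate. To verify its hypothesis, given any entourage $U\in\U(X)$ I note that $U[x]$ is a neighborhood of $x$, so $F_n\cap U[x]\ne\emptyset$ for all sufficiently large $n$; equivalently $(\{x\}\times F_n)\cap U\ne\emptyset$ for all large $n$, which is exactly what the hypothesis of Lemma~\ref{l:countable} requires.

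Lemma~\ref{l:countable}, applied to the very same $\alpha$, now produces $k\in\w$ with $(\{x\}\times F_n)\cap U_{\alpha|k}\ne\emptyset$ for all $n\ge k$; that is, $F_n\cap U_{\alpha|k}[x]\ne\emptyset$ for every $n\ge k$. But for $n\ge k$ the monotonicity $U_{\alpha|k}[x]\subseteq U_{\alpha|n}[x]\subseteq\overline{U_{\alpha|n}[x]}$ shows that $U_{\alpha|k}[x]$ is disjoint from $W_n=X\setminus\overline{U_{\alpha|n}[x]}\supseteq F_n$, contradicting $F_n\cap U_{\alpha|k}[x]\ne\emptyset$. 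This contradiction establishes the lemma.

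The one delicate point, and the step I expect to be the main obstacle, is matching the direction of the entourages so that the conclusion of Lemma~\ref{l:countable} speaks about the \emph{forward} balls $U_{\alpha|k}[x]$ appearing in the definition of $W_n$: placing $\{x\}$ in the first coordinate and $F_n$ in the second is precisely what aligns the two (alternatively, one may first pass to a symmetric $\mathfrak G$-base, after which the direction is immaterial). Beyond this bookkeeping, the argument uses only the monotonicity of $k\mapsto U_{\alpha|k}$ and the elementary complementation relating ``$\overline{U_{\alpha|k}[x]}$ is not a neighborhood of $x$'' to ``$x\in\overline{W_k}$'', which is what licenses the appeal to the strong open-Reznichenko property.
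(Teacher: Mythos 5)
Your proof is correct and follows essentially the same route as the paper's: assume each $\bar U_{\alpha|k}[x]$ fails to be a neighborhood of $x$, feed the decreasing open sets $X\setminus\bar U_{\alpha|k}[x]$ (whose closures all contain $x$) into the strong open-Reznichenko property, and then apply Lemma~\ref{l:countable} with the constant singleton $\{x\}$ paired against the resulting finite sets to contradict $F_k\subset X\setminus\bar U_{\alpha|k}[x]$. Your extra care about coordinate order in Lemma~\ref{l:countable} and the monotonicity of $k\mapsto U_{\alpha|k}$ only makes explicit what the paper leaves implicit.
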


\begin{proof} Assume that for every $k\in\w$ the closed set $\bar U_{\alpha|k}[x]$ is not a neighborhood of $x$. Then the open set $A_k=X\setminus\bar U_{\alpha|k}(x)$ contains the point $x$ in its closure. Since $X$ is strong open-Reznichenko at $x$, there exists a sequence $(E_k)_{k\in\w}$ of finite sets $E_k\subset A_k$ such that every neighborhood of $x$ intersects all but finitely many sets $E_k$, $k\in\w$. By Lemma~\ref{l:countable}, there exists $k\in\w$ such that $U_{\alpha|k}[x]\cap E_n\ne\emptyset$ for all $n\ge k$. But this contradicts the choice of the set $E_k\subset A_k=X\setminus \bar U_{\alpha|k}[x]\subset X\setminus U_{\alpha|k}[x]$.
\end{proof}

A topological space $X$ is called {\em closed-$\bar G_\delta$} if for each closed subset $F\subset X$ there exists a sequence $(W_n)_{n\in\w}$ of open sets in $X$ such that $F=\bigcap_{n\in\w}W_n=\bigcap_{n\in\w}\overline{W}_n$. It is easy to see that each perfectly normal space is closed-$\bar G_\delta$. We remind that a topological space $X$ is called  {\em perfectly normal} if $X$ is normal and every closed set is a $G_\delta$-set in $X$.


\begin{theorem}\label{t:metr} A topological space $X$ is metrizable if and only if $X$ is strongly open-Reznichenko, closed-$\bar G_\delta$, and the topology of $X$ is generated by a uniformity  possessing a $\mathfrak G$-base.
\end{theorem}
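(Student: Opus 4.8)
**The plan is to prove both implications, with the forward direction being routine and the reverse direction carrying the real content.**

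The forward implication is immediate from standard facts: every metrizable space $X$ is first-countable, perfectly normal (hence closed-$\bar G_\delta$), and its topology is generated by the metric uniformity, which has a countable base and therefore a $\mathfrak G$-base (any countable base $(U_n)_{n\in\w}$ yields the $\mathfrak G$-base $(U_{\alpha(0)})_{\alpha\in\w^\w}$, say). Finally, a first-countable space is trivially strong open-Reznichenko at every point, since one can extract a convergent sequence from any decreasing sequence of open sets accumulating at $x$. So the substance of the theorem is the converse.

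For the reverse direction, I would assume $X$ is strong open-Reznichenko, closed-$\bar G_\delta$, and that its topology is generated by a uniformity $\U(X)$ with a $\mathfrak G$-base $(U_\alpha)_{\alpha\in\w^\w}$. The goal is to produce a countable base for $\U(X)$ (at which point metrizability follows from the classical theorem that a uniformity with a countable base is pseudometrizable, combined with Hausdorffness). The first key step uses Lemma~\ref{l:neib}: for each point $x$ and each $\alpha\in\w^\w$, there is $k\in\w$ such that $\bar U_{\alpha|k}[x]$ is a neighborhood of $x$. This converts the $\mathfrak G$-base into \emph{closed} neighborhoods indexed by the countable tree $\w^{<\w}$. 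The plan is to show that the countable family of entourages $\{U_s : s\in\w^{<\w}\}$ (or a suitable subfamily thereof) already forms a neighborhood base at each point, which reduces the space to being first-countable at every point; by \cite{BaR}, first-countability at $x$ is exactly the conjunction of having a $\mathfrak G$-base at $x$ and the strong open-Reznichenko property at $x$, so first-countability should fall out once we exploit these hypotheses together.

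The hard part will be upgrading pointwise/local control to a \emph{global} countable base for the uniformity, rather than merely establishing first-countability of the topology. First-countability alone does not force a uniformity to have a countable base; one must genuinely use the $\mathfrak G$-base structure of $\U(X)$ to select countably many entourages $U_{s}$, $s\in\w^{<\w}$, that are cofinal in $\U(X)$. Here the closed-$\bar G_\delta$ hypothesis should enter: I expect it is used to pass from the closed neighborhoods $\bar U_{\alpha|k}[x]$ supplied by Lemma~\ref{l:neib} to a controlled $G_\delta$-representation of the diagonal (or of relevant closed sets in $X\times X$), ensuring that a countable selection $\{U_s\}_{s\in\w^{<\w}}$ captures every entourage. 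The combinatorial engine will again be Lemma~\ref{l:countable}, applied to suitable sequences of finite sets, to argue that no entourage can escape the countable family on a tail. Assembling these---Lemma~\ref{l:neib} for neighborhood-hood, closed-$\bar G_\delta$ for the $G_\delta$-structure, the \cite{BaR} criterion for first-countability, and Lemma~\ref{l:countable} for the cofinality of the countable subfamily---should yield a countable base of $\U(X)$ and hence metrizability of $X$.
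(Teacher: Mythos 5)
Your forward direction is fine, and your first step in the converse (invoking Lemma~\ref{l:neib} to get closed neighborhoods $\bar U_{\alpha|k}[x]$ indexed by $\w^{<\w}$) matches the paper. But the core of your plan --- ``select countably many entourages $U_s$, $s\in\w^{<\w}$, that are cofinal in $\U(X)$'' and then apply the classical countable-base metrization theorem for uniformities --- cannot work, because the hypotheses of the theorem do not imply that the generating uniformity has a countable base. Concretely, take $X=\IQ$ with its finest uniformity $\U(\IQ)$: the space is metrizable, hence strongly open-Reznichenko and closed-$\bar G_\delta$, and $\U(\IQ)$ has a $\mathfrak G$-base by Proposition~\ref{p:sigma'} (the set of non-isolated points is $\IQ$ itself, which is $\sigma$-compact); yet by Ginsburg's characterization \cite{Ginsburg} quoted in the introduction, $\U(X)$ has a countable base only when the set of non-isolated points is compact, which fails for $\IQ$. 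So no countable subfamily of $\U(\IQ)$ is cofinal, and your intermediate claim is false even in the simplest intended examples. The theorem asserts metrizability of the \emph{topology}, not pseudometrizability of the given uniformity, and any correct proof must respect that distinction.

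The paper's proof sidesteps this by verifying the hypothesis of the Moore Metrization Theorem \cite[5.4.2]{Eng} instead of metrizing $\U$. For each $s\in\w^{<\w}$ it forms the family $\U_s=\{\bar U^\circ_s[x]:x\in X\}$ of interiors of closed balls; Lemma~\ref{l:neib} guarantees that, given $x$ and $\alpha$, some $\bar U^\circ_{\alpha|k}[x]$ is a neighborhood of $x$, but the family $\U_s$ need not cover $X$. This is where closed-$\bar G_\delta$ actually enters --- not, as you guessed, to give a $G_\delta$-representation of the diagonal, but to repair the failure of covering: writing $X\setminus\bigcup\U_s=\bigcap_{m\in\w}\overline{W}_{s,m}$ for open $W_{s,m}$, one gets a countable family of genuine open covers $\U_{s,m}=\U_s\cup\{W_{s,m}\}$. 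The Moore condition $\St(V_x,\U_{\alpha|k,m})\subset O_x$ is then checked with $V_x=\bar U^\circ_{\alpha|k}[x]\setminus\overline{W}_{\alpha|k,m}$, using an entourage $U$ with $U^{-1}UU^{-1}UU^{-1}U\subset V$ and $V[x]\subset O_x$; the subtraction of $\overline{W}_{\alpha|k,m}$ forces any member of $\U_{\alpha|k,m}$ meeting $V_x$ to be a ball $\bar U^\circ_{\alpha|k}[z]$ rather than the extra set $W_{\alpha|k,m}$, and the sixfold composition absorbs the resulting chain of ball intersections. Also, the appeal to \cite{BaR} in your sketch is not needed (and not used in the paper): first-countability is neither established nor sufficient here --- Example~\ref{e:cosmic} exhibits a first-countable, closed-$\bar G_\delta$, strongly open-Reznichenko space that is not metrizable, precisely because its finest uniformity lacks a $\mathfrak G$-base.
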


\begin{proof} The ``only if'' part is trivial. To prove the ``if'' part, assume that the space $X$ is strongly open-Reznichenko, closed-$\bar G_\delta$, and the topology of $X$ is generated by a uniformity $\U$ possessing a $\mathfrak G$-base $(U_\alpha)_{\alpha\in\w^\w}$.
The metrizaility of $X$ will be proved using the Moore Metrization Theorem \cite[5.4.2]{Eng} (see also \cite[1.4]{Grue}).

For every $\alpha\in \w^{<\w}$ consider the entourage $U_\alpha=\bigcap\{U_\beta:\beta\in {\uparrow}\alpha\}\subset X\times X$. For every $x\in X$ the entourage $U_\alpha$ determines the $U_\alpha$-ball $U_\alpha[x]=\{y\in X:(x,y)\in U_\alpha\}$. Let $\bar U_{\alpha}[x]$ be its closure in $X$ and $\bar U^\circ_\alpha[x]$ be the interior of $\bar U_\alpha[x]$ in $X$.
Then $\U_\alpha=\{\bar U^\circ_\alpha[x]:x\in X\}$ is a family of open subsets of $X$ and its union $\bigcup\U_\alpha$ is an open subset in $X$. Since the space $X$ is closed-$\bar G_\delta$,  $X\setminus\bigcup\U_\alpha=\bigcap_{m\in\w}\overline{W}_{\alpha,m}$ for some  sequence $(W_{\alpha,m})_{m\in\w}$ of open sets in $X$. For every $m\in\w$ consider the open cover $\U_{\alpha,m}=\U_\alpha\cup\{W_{\alpha,m}\}$ of $X$.

It follows that $\{\U_{\alpha,m}:\alpha\in\w^{<\w},\;m\in\w\}$ is a countable family of open covers of $X$. The metrizability of $X$ will follow from the Moore metrization Theorem \cite[5.4.2]{Eng} as soon as for every point $x\in X$ and  neighborhood $O_x\subset X$ of $x$ we find a neighborhood $V_x\subset X$ of $x$ and a pair $(\alpha,m)\in\w^{<\w}\times\w$ such that $\St(V_x,\U_{\alpha,m})\subset O_x$, where $\St(V_x,\U_{\alpha,m})=\bigcup\{U'\in\U_{\alpha,m}:V_x\cap U'\ne\emptyset\}$.

First, find an entourage $V\in\U$ such that $V[x]\subset O_x$ and choose an entourage $U\in\U$ such that $U^{-1}UU^{-1}UU^{-1}U\subset V$. Since $(U_\alpha)_{\alpha\in\w^\w}$ is a $\mathfrak G$-base of the uniformity $\U$, there is a sequence $\alpha\in\w^\w$ such that $U_\alpha\subset U\subset V$ and hence $U_\alpha[x]\subset V[x]\subset O_x$. By Lemma~\ref{l:neib}, for some $n\in\w$ the set $\bar U_{\alpha|n}[x]$ contains the point $x$ in its interior. So, $x\in \bar U^\circ_{\alpha|n}[x]\in\U_{\alpha|n}$ and hence $x\in \bigcup\U_{\alpha|n}\setminus \overline{W}_{\alpha|n,m}$ for some $m\in\w$. We claim that the neighborhood $V_x=\bar U^\circ_{\alpha|n}[x]\setminus \overline{W}_{\alpha|n,m}$ of $x$ and the cover $\U_{\alpha|n,m}$ have the required property: $\St(V_x,\U_{\alpha|n,m})\subset O_x$. Given any point $y\in \St(V_x,\U_{\alpha|n,m})$, we can find a point $v\in V_x$ and a set $U'\in\U_{\alpha|n,m}$ such that $\{y,v\}\subset U'$. Since $v\in V_x\subset X\setminus\overline{W}_{\alpha|n,m}$, the set $U'$ is equal to the set $\bar U^\circ_{\alpha|n}(z)$ for some point $z\in X$.
The inclusion $$v\in V_x\subset \bar U^\circ_{\alpha|n}[x]\subset \bar U_{\alpha|n}[x]\subset\bar U_\alpha[x]\subset \bar U[x]$$implies $U[v]\cap U[x]\ne\emptyset$ and hence $v\in U^{-1} U[x]$.
On the other hand, the inclusions
$$\{y,v\}\subset U'=\bar U^\circ_{\alpha|n}[z]\subset \bar U_{\alpha|n}[z]\subset \bar U_\alpha[z]\subset \bar U[z]$$ imply $U[y]\cap U[z]\ne \emptyset\ne U[z]\cap U[v]$,
$z\in U^{-1} U[v]$ and finally
$$y\in U^{-1} U[z]\subset U^{-1} U U^{-1} U[v]\subset U^{-1} U U^{-1} U U^{-1} U[x]\subset V[x]\subset O_x.$$
\end{proof}





\section{Some properties of the poset $C(X)$}\label{s:CX}

In this section we study properties of Tychonoff spaces $X$ for which the poset $C(X)$ is $\w^\w$-dominated (in $\IR^X$).

For a topological space $X$ by $C_p(X)$ we denote the subspace of Tychonoff power $\IR^X$ consisting of continuous functions.
A Tychonoff space $X$ is cosmic if and only if its function space $C_p(X)$ is cosmic (see Theorem I.1.3 in \cite{Arch}). By the Calbrix Theorem 9.7 in \cite{kak} (and the Christensen Theorem 9.6 in \cite{kak}), for a (separable metrizable) Tychonoff space $X$ the function space  $C_p(X)$ is analytic (if and) only if $X$ is $\sigma$-compact. A topological space $X$ is called {\em analytic} if $X$ is a continuous image of a Polish space.

By a ({\em compact}) {\em resolution} of a topological space $X$ we understand a family $(X_\alpha)_{\alpha\in\w^\w}$ of (compact) subsets of $X$ such that $X=\bigcup_{\alpha\in\w^\w}X_\alpha$ and $X_\alpha\subset X_\beta$ for every $\alpha\le\beta$ in $\w^\w$.
More information on compact resolutions can be found in the monograph \cite{kak}.

A topological space $X$ is called {\em $K$-analytic} if
$X=\bigcup_{\alpha\in\w^\w}K_\alpha$ for a family $(K_\alpha)_{\alpha\in\w^\w}$ of compact subsets of $X$, which is {\em upper semicontinuous} in the sense that for every open set $U\subset X$ the set $\{\alpha\in\w^\w:K_\alpha\subset U\}$ is open in the product topology of the space $\w^\w$. If each compact set $K_\alpha$ is a singleton $\{f(\alpha)\}$ then the upper semicontinuity of the family $(K_\alpha)_{\alpha\in\w}$ is equivalent to the continuity of the map $f:\w^\w\to X$, meaning that the space $X$ is analytic.

It is clear that each $K$-analytic space admits a compact resolution and is Lindel\"of. By Proposition 3.13 \cite{kak}, a Lindel\"of regular space $X$ is $K$-analytic if and only if $X$ has a compact resolution. By
 Talagrand's Proposition 6.3 \cite{kak}, a regular space $X$ is analytic if and only if $X$ is $K$-analytic and submetrizable. We recall that a topological space $X$ is {\em submetrizable} if $X$ admits a continuous metric (equivalently, $X$ admits a continuous injective map into a metrizable space).
 A topological space $X$ is {\em angelic} if each countably compact subset $K$ in $X$ has compact closure $\bar K$ in $X$ and for each point $x\in\bar K$ there exists a sequence $\{x_n\}_{n\in\w}\subset K$ convergent to $x$. A subset $K\subset X$ is called {\em countably compact} in $X$ if each sequence $\{x_n\}_{n\in\w}\subset K$ has an accumulation point in $X$.

A subset $B$ of a topological space $X$ is called {\em bounded} if for each continuous function $f:X\to\IR$ the image $f(B)$ is bounded in $\IR$. A topological space $X$ is called {\em $\sigma$-bounded} if $X$ can be written as the countable union of bounded sets in $X$.


\begin{theorem}\label{t:ACFK} For a topological space $X$ we have the implications\newline
$(1)\Ra(2)\Leftrightarrow(3)\Ra(4,5)$ of the following properties:
\begin{enumerate}
\item $X$ is $\sigma$-bounded;
\item the set $C(X)$ is $\w^\w$-dominated in $\IR^X$;
\item $C_p(X)$ is contained in a $K$-analytic subspace of $\IR^X$;
\item the space $C_p(X)$ is angelic;
\item each  metrizable image of $X$ is $\sigma$-compact.
\end{enumerate}
\end{theorem}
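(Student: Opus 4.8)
The plan is to prove the four implications $(1)\Ra(2)$, $(2)\Ra(3)$, $(3)\Ra(2)$ and $(3)\Ra(4,5)$ separately, treating $(2)\Ra(3)$ as the technical core and deducing $(4),(5)$ from machinery already available in the excerpt.

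For $(1)\Ra(2)$ I would first make the bounded cover increasing: if $X=\bigcup_{n\in\w}B_n$ with each $B_n$ bounded, then replacing $B_n$ by $B_0\cup\dots\cup B_n$ (a finite union of bounded sets is bounded) lets me assume $B_0\subset B_1\subset\cdots$. For $x\in X$ put $n(x)=\min\{n:x\in B_n\}$ and, for $\alpha\in\w^\w$, define $f_\alpha\in\IR^X$ by $f_\alpha(x)=\alpha(n(x))$. Then $\alpha\mapsto f_\alpha$ is monotone, and given $f\in C(X)$ the numbers $s_n=\sup_{x\in B_n}\max\{f(x),0\}$ are finite by boundedness of $B_n$, so the function $\alpha(n)=\lceil s_n\rceil$ satisfies $f(x)\le s_{n(x)}\le f_\alpha(x)$ for all $x$. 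Hence $\{f_\alpha\}_{\alpha\in\w^\w}$ witnesses that $C(X)$ is $\w^\w$-dominated in $\IR^X$. For $(3)\Ra(2)$, suppose $C_p(X)\subset Y=\bigcup_{\alpha\in\w^\w}K_\alpha$ with $(K_\alpha)$ an upper semicontinuous compact-valued family. Since $\{\beta:\beta\le\alpha\}=\prod_n\{0,\dots,\alpha(n)\}$ is compact and upper semicontinuous images of compacta are compact, each $\bigcup_{\beta\le\alpha}K_\beta$ is compact; as every projection $\pi_x:\IR^X\to\IR$ is continuous, I may set $f_\alpha(x)=\max\{|h(x)|:h\in\bigcup_{\beta\le\alpha}K_\beta\}<\infty$. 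This family is monotone in $\alpha$, and every $f\in C(X)\subset Y$ lies in some $K_{\beta_0}$, so $|f|\le f_\alpha$ for all $\alpha\ge\beta_0$; thus $C(X)$ is $\w^\w$-dominated in $\IR^X$.

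The heart of the argument is $(2)\Ra(3)$. Starting from a monotone dominating family $\{f_\alpha\}$, I would replace $f_\alpha$ by $\max\{f_\alpha,0\}$ (still monotone and dominating, since $f\le|f|\le f_\alpha$ for suitable $\alpha$) to arrange $f_\alpha\ge0$, and then form the compact boxes $K_\alpha=\prod_{x\in X}[-f_\alpha(x),f_\alpha(x)]\subset\IR^X$. By Tychonoff's theorem each $K_\alpha$ is compact, $\alpha\le\beta$ gives $K_\alpha\subset K_\beta$, and $C_p(X)\subset Y:=\bigcup_\alpha K_\alpha$ because domination of $|f|$ places each $f\in C(X)$ into some $K_\alpha$. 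Thus $(K_\alpha)$ is a compact resolution of the Tychonoff space $Y$. The \emph{main obstacle} is that the map $\alpha\mapsto K_\alpha$ need not be upper semicontinuous: a merely monotone family carries no continuity in $\alpha$, so $K_\beta$ may blow up for indices $\beta$ close to but not below $\alpha$, and $K$-analyticity of $Y$ does not follow formally from the definition. I expect this to be the decisive difficulty, and I would resolve it by checking that $Y$ is Lindel\"of and invoking \cite[Proposition~3.13]{kak} (a Lindel\"of regular space with a compact resolution is $K$-analytic), or, alternatively, by reindexing $(K_\alpha)$ to an upper semicontinuous compact-valued map with the same union. Either route produces a $K$-analytic subspace of $\IR^X$ containing $C_p(X)$.

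Finally, $(3)\Ra(4)$ follows because every $K$-analytic space is angelic (see \cite{kak}) and angelicity is inherited by arbitrary subspaces, so $C_p(X)\subset Y$ is angelic. And $(3)\Ra(5)$ follows by combining the already-proved $(3)\Ra(2)$ with Theorem~\ref{t:dominat}: equipping $X$ with its finest uniformity $\U(X)$ gives $C_\w(X)=C(X)$, and any continuous surjection $f:X\to M$ onto a metric space is uniformly continuous (the pulled-back metric $\rho(x,x')=d_M(f(x),f(x'))$ is a continuous pseudometric on $X$), hence $\w$-continuous; Theorem~\ref{t:dominat} then yields that the image $M=f(X)$ is $\sigma$-compact.
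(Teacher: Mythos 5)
Your $(1)\Ra(2)$ and $(3)\Ra(2)$ are correct: the first coincides with the paper's own construction (the paper does not even bother making the cover increasing; it sets $f_\alpha(x)=\alpha(n)$ on $B_n\setminus\bigcup_{k<n}B_k$), and the second, via the compact sets $\bigcup_{\beta\le\alpha}K_\beta$ with $\{\beta:\beta\le\alpha\}$ compact, is a clean self-contained argument for a direction the paper leaves inside a citation. Your route to $(5)$ also matches the paper exactly: the paper derives $(2)\Ra(5)$ from Theorem~\ref{t:dominat}, and your verification that a continuous map into a metric space is uniformly continuous for the finest uniformity (pull back the metric to a continuous pseudometric), hence $\w$-continuous with $C_\w(X)=C(X)$, is fine. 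However, there are two genuine gaps. First, $(3)\Ra(4)$ rests on a false lemma: it is \emph{not} true that every $K$-analytic space is angelic. Every compact Hausdorff space is $K$-analytic (take the constant map $\alpha\mapsto K$, which is trivially upper semicontinuous), yet $\beta\w$ is compact but not sequentially compact and therefore not angelic. So angelicity of $C_p(X)$ cannot be obtained by restricting a hereditary angelicity of the frame $Y$; the actual argument (Proposition 9.6 of \cite{kak}, ultimately Orihuela-type angelicity theorems) exploits the special position of $C_p(X)$ inside $\IR^X$ relative to the $K$-analytic frame, not a general property of $K$-analytic spaces. The paper handles this by citing \cite{kak} for $(2)\Leftrightarrow(3)\Ra(4)$ wholesale.

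Second, you do not close $(2)\Ra(3)$. You rightly flag that the box resolution $K_\alpha=\prod_{x\in X}[-f_\alpha(x),f_\alpha(x)]$ need not be upper semicontinuous (a merely monotone $\alpha\mapsto f_\alpha$, e.g.\ one factoring through $\min_n\alpha(n)$, already defeats it), but neither proposed repair works as stated. The Lindel\"of route is circular: $Y$ is a union of continuum many compact boxes and no reason is offered why it should be Lindel\"of; since $K$-analytic spaces are Lindel\"of and, by Proposition 3.13 of \cite{kak}, Lindel\"ofness of $Y$ is (given the resolution) equivalent to its $K$-analyticity, establishing Lindel\"ofness of $Y$ is exactly as hard as the conclusion you want — and it is not even established that this particular $Y$ is $K$-analytic, as the frame produced in the literature need not be the union of your boxes. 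As for ``reindexing $(K_\alpha)$ to an upper semicontinuous compact-valued map with the same union,'' that \emph{is} the nontrivial content of $(2)\Leftrightarrow(3)$, so offering it as an alternative begs the question. The honest fix — and the paper's actual proof, which is essentially three lines long — is to quote Proposition 9.6 of \cite{kak}, which contains precisely the equivalence $(2)\Leftrightarrow(3)$ together with the angelicity conclusion $(4)$; you cite only Proposition 3.13, which does not suffice.
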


\begin{proof} To prove that $(1)\Ra(2)$, assume that $X=\bigcup_{n\in\w}B_n$ is a countable union of bounded sets $B_n$. For every $\alpha\in\w^\w$ consider the function $f_\alpha:X\to\IR$ such that $f_\alpha(x)=\alpha(n)$ for any $n\in\w$ and $x\in B_n\setminus\bigcup_{k<n}B_k$. It is easy to see that the monotone correspondence $\w^\w\to\IR^X$, $\alpha\mapsto f_\alpha$, witnesses that the set  $C(X)$ is $\w^\w$-dominated in $\IR^X$.



The implications $(2)\Leftrightarrow(3)\Ra(4)$ follow from Proposition 9.6 of \cite{kak} and the implication $(2)\Ra(5)$ is proved in Theorem~\ref{t:dominat}.
\end{proof}

\begin{remark}  By Example 2 of \cite{Lei}, there exists a Tychonoff space with a unique non-isolated point $X$ such that
$X$ is Lindel\"of, the function space $C_p(X)$ is $K$-analytic but $X$ is not $\sigma$-bounded. This example shows that the implication $(1)\Ra(2)\Leftrightarrow (3)$ in Theorem~\ref{t:ACFK} cannot be reversed and thus answers the corresponding problem posed by Arhangel'ski and Calbrix in \cite{AC} and then repeated in \cite[p.~215]{kak}.
\end{remark}

Metrizable spaces $X$ with $\w^\w$-dominated poset $C(X)$ can be characterized as follows.

\begin{theorem}\label{t:s->dom} For a metrizable space $X$ the following conditions are equivalent:
\begin{enumerate}
\item $X$ is $\sigma$-compact;
\item the poset $C(X)$ is $\w^\w$-dominated;
\item the set $C(X)$ is $\w^\w$-dominated in $\IR^X$;
\item $C(X)$ is contained in a $K$-analytic subspace of $\IR^X$;
\item $C_p(X)$ is analytic.
\end{enumerate}
\end{theorem}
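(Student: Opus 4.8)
The plan is to close the cycle $(1)\Ra(2)\Ra(3)\Ra(1)$ together with $(3)\Leftrightarrow(4)$ and $(1)\Ra(5)\Ra(4)$, so that all five conditions collapse to one equivalence class; most links are immediate from the machinery already in place. The implication $(2)\Ra(3)$ is trivial, since a family inside $C(X)$ witnessing that the \emph{poset} $C(X)$ is $\w^\w$-dominated already lives in $\IR^X$ and dominates $C(X)$ there. The equivalence $(3)\Leftrightarrow(4)$ is literally the equivalence $(2)\Leftrightarrow(3)$ of Theorem~\ref{t:ACFK} (the sets $C(X)$ and $C_p(X)$ coincide; only the topology differs). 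For $(3)\Ra(1)$ I would invoke the implication $(2)\Ra(5)$ of Theorem~\ref{t:ACFK}: condition $(3)$ is exactly its hypothesis, so every metrizable continuous image of $X$ is $\sigma$-compact, and applying this to the identity map $X\to X$ — legitimate because $X$ is assumed metrizable — shows $X$ itself is $\sigma$-compact. Finally $(5)\Ra(4)$ is free, since an analytic space is in particular $K$-analytic, so $C_p(X)$ is itself the required $K$-analytic subspace of $\IR^X$; and $(1)\Ra(5)$ follows from the Calbrix--Christensen characterization recalled at the start of this section, once one notes that a $\sigma$-compact metrizable space is separable (it is a countable union of separable compact metrizable subspaces).

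The one substantive step is $(1)\Ra(2)$, and the plan is to avoid any explicit construction and instead feed $\sigma$-compactness into the reduction Theorem~\ref{t:u}. Assume $X$ is $\sigma$-compact and metrizable. Then $X$ is separable, so its density is $\kappa=\w$; moreover the subspace $X'$ of non-isolated points is closed, hence $\sigma$-compact, so by Proposition~\ref{p:sigma'} the finest uniformity $\U(X)$ admits a $\mathfrak G$-base, i.e. $\w^\w\succcurlyeq\U(X)$. I would then read off from Theorem~\ref{t:u}, taken with $\kappa=\w$, the chain
$$C(X)=C_u(X)\preccurlyeq\E(C_u(X))\preccurlyeq\U(X)^\w\times\w^\w,$$
using that $C_u(X)=C(X)$ because $X$ carries its finest uniformity. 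Since $\w^\w\succcurlyeq\U(X)$ gives $\w^\w\cong(\w^\w)^\w\succcurlyeq\U(X)^\w$, and $\w^\w\cong\w^\w\times\w^\w$, the right-hand poset reduces to $\w^\w$; hence $\w^\w\succcurlyeq C(X)$, which is precisely condition $(2)$.

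I expect the main obstacle to be conceptual rather than computational: the naive attempt is to build the dominating family $(f_\alpha)_{\alpha\in\w^\w}\subset C(X)$ by hand from a compact exhaustion $X=\bigcup_n K_n$, making $f_\alpha$ of size $\alpha(n)$ on the shell $K_n\setminus K_{n-1}$. This collapses exactly when $X$ fails to be locally compact — for $X=\IQ$ \emph{every} compact exhaustion admits points $x_k\to x_\infty\in X$ with $x_k$ lying in ever higher shells, so the lower-semicontinuous ``staircase'' $x\mapsto\alpha(n(x))$ has no continuous majorant, and no continuous $f_\alpha$ exists. The purpose of routing $(1)\Ra(2)$ through Theorem~\ref{t:u} is that the dominating family is manufactured abstractly from pointwise-bounded equicontinuous sets: separability controls the pointwise-boundedness coordinate (the factor $\w^\kappa=\w^\w$), while the $\mathfrak G$-base of $\U(X)$ supplied by Proposition~\ref{p:sigma'} collapses the equicontinuity coordinate $\U(X)^\w$ to $\w^\w$. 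Thus the only genuinely external inputs are Proposition~\ref{p:sigma'} and the cited Calbrix--Christensen theorem; everything else is the poset arithmetic $\w^\w\times\w^\w\cong(\w^\w)^\w\cong\w^\w$ already exploited throughout the paper.
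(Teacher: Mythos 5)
Your proposal is correct, and on the one substantive implication $(1)\Ra(2)$ it takes a genuinely different route from the paper. The paper proves $(1)\Ra(2)$ by a direct construction: fixing an increasing compact exhaustion $X=\bigcup_{n\in\w}X_n$ and a compatible metric $d$, it defines for each $\alpha\in\w^\w$ the pointwise bounded equicontinuous family $E_\alpha$ of functions bounded by $\alpha(n)$ on $X_n$ and satisfying $|f(x)-f(y)|<1/n$ whenever $x\in X_n$ and $d(x,y)<1/\alpha(n)$, and takes $f_\alpha=\sup_{f\in E_\alpha}|f|$ — which is precisely the concrete instantiation of the equicontinuity mechanism you identified as the way around the failure of the naive staircase construction (your $X=\IQ$ diagnosis matches the paper's resolution in spirit). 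You instead manufacture the monotone cofinal map $\w^\w\to C(X)$ abstractly: Proposition~\ref{p:sigma'} yields $\w^\w\succcurlyeq\U(X)$ (correctly, since $X'$ is closed in the $\sigma$-compact $X$), separability gives $\kappa=\w$ in Theorem~\ref{t:u}, and the arithmetic $\w^\w\cong\w^\w\times(\w^\w)^\w\succcurlyeq \w^\w\times\U(X)^\w\succcurlyeq\E(C_u(X))\succcurlyeq C(X)$ finishes; this is valid and non-circular, as Lemma~\ref{l:u2}, Theorem~\ref{t:u} and Proposition~\ref{p:sigma'} precede the theorem and do not depend on it. What the paper's construction buys is self-containedness: your route imports the nontrivial external result Proposition~\ref{p:sigma'} from \cite{LPT}, which the explicit construction avoids, and it also produces the dominating family concretely; your route buys brevity and makes visible that the theorem is a formal consequence of the reduction calculus already developed in Section~\ref{s:Lc}. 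For the remaining equivalences the difference is only organizational: the paper cites Corollary 9.2 and Proposition 9.6 of \cite{kak} wholesale for $(1)\Leftrightarrow(3)\Leftrightarrow(4)\Leftrightarrow(5)$, whereas you reassemble them from Theorem~\ref{t:ACFK} (which itself rests on the same Proposition 9.6 of \cite{kak} together with Theorem~\ref{t:dominat}) plus the Calbrix theorem for $(1)\Ra(5)$; your implication graph does close up to a single equivalence class, and the two small auxiliary steps — applying Theorem~\ref{t:ACFK}(5) to the identity map in $(3)\Ra(1)$, and noting that a $\sigma$-compact metrizable space is separable in $(1)\Ra(5)$ — are both legitimate.
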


\begin{proof} The equivalences $(1)\Leftrightarrow(3)\Leftrightarrow(4)\Leftrightarrow(5)$ are proved in Corollary 9.2 and Proposition 9.6 of the monograph \cite{kak}. The implication $(2)\Ra(3)$ is trivial. It remains to prove that $(1)\Ra(2)$. Write $X$ as the union $X=\bigcup_{n\in\w}X_n$ of an increasing sequence $(X_n)_{n\in\w}$ of compact subsets. Fix any metric $d$ generating the topology of $X$. For every $\alpha\in\w^\w$ let
 $$B_\alpha=\bigcap_{n\in\w}\big\{f\in \IR^X:\sup_{x\in X_n}|f(x)|\le\alpha(n)\big\}$$
and consider the pointwise bounded equicontinuous set
 $$E_\alpha=B_\alpha \cap\bigcap_{n\in\w}\bigcap_{x\in X_n}\big\{f\in\IR^X:\forall y\in X\;d(x,y)<\tfrac1{\alpha(n)}\Ra |f(x)-f(y)|<1/n\big\}$$in $C(X)$. It follows that the function $f_\alpha:X\to\IR$, $f_\alpha:x\mapsto \sup_{f\in E_\alpha}|f(x)|$ is continuous and the map $\w^\w\to C(X)$, $\alpha\mapsto f_\alpha$, witnesses that the function space $C(X)$ is $\w^\w$-dominated.
\end{proof}

\begin{proposition}\label{p:G+dom->K} If the finest uniformity $\U(X)$ of a topological space $X$ has a $\mathfrak G$-base and the poset $C(X)$ is $\w^\w$-dominated in $\IR^\w$, then the function space $C_p(X)$ is $K$-analytic. Consequently, all finite powers $X^n$, $n\in\IN$, of $X$ are countably tight.
\end{proposition}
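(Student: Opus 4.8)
The plan is to extract from the two hypotheses a compact resolution of the function space $C_p(X)$ and then to upgrade it to $K$-analyticity with the help of angelicity. Since $X$ carries its finest uniformity, $C_u(X)=C(X)$, and our two assumptions are precisely condition $(3)$ of Corollary~\ref{c:Lu3}; hence that corollary applies and shows that the free locally convex space $\Lc(X)=\Lc_u(X)$ has a local $\mathfrak G$-base, i.e. $\w^\w\succcurlyeq\Tau_0(\Lc(X))$. By Theorem~\ref{t:u} we have $\Tau_0(\Lc(X))\cong\E(C(X))$, so altogether
$$\w^\w\succcurlyeq\E(C(X)),$$
and I may fix a monotone cofinal map $\w^\w\to\E(C(X))$, $\alpha\mapsto E_\alpha$, onto the poset of pointwise bounded equicontinuous subfamilies of $C(X)$.

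The next step turns $(E_\alpha)_{\alpha\in\w^\w}$ into a compact resolution. For each $\alpha$ let $K_\alpha$ be the closure of $E_\alpha$ in $\IR^X$. By the Ascoli theorem $K_\alpha$ is a compact subset of $C_p(X)$: pointwise boundedness confines $K_\alpha$ to a compact box $\prod_{x\in X}[-\varphi(x),\varphi(x)]$, while equicontinuity (with respect to the finest uniformity $\U(X)$, which generates the topology of $X$) forces every pointwise limit of members of $E_\alpha$ to be continuous, so $K_\alpha\subset C(X)$. Monotonicity of $\alpha\mapsto E_\alpha$ gives $K_\alpha\subset K_\beta$ whenever $\alpha\le\beta$, and cofinality gives the covering property: each $f\in C(X)$ is uniformly continuous, so the singleton $\{f\}$ belongs to $\E(C(X))$ and therefore $\{f\}\subset E_\alpha$, that is $f\in K_\alpha$, for some $\alpha$. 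Thus $(K_\alpha)_{\alpha\in\w^\w}$ is a compact resolution of $C_p(X)$.

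The delicate point, and the main obstacle, is that a monotone compact resolution indexed by $\w^\w$ is in general \emph{not} upper semicontinuous, and a space carrying one need not even be Lindel\"of (Example~\ref{ex:PU} exhibits, under $\w_1=\mathfrak b$, exactly this kind of pathology). The extra ingredient that rescues the argument is angelicity: by the implication $(2)\Ra(4)$ of Theorem~\ref{t:ACFK}, the $\w^\w$-domination of $C(X)$ in $\IR^X$ makes $C_p(X)$ angelic. An angelic regular space possessing a compact resolution is $K$-analytic (this is the theorem of Cascales and Orihuela; see \cite{kak}), and applying it to $C_p(X)$ yields the first assertion. Intuitively, angelicity excludes the countably-compact-but-non-compact sets that would allow a compact resolution to live on a non-Lindel\"of space; once these are ruled out, the resolution can be made upper semicontinuous and Proposition~3.13 of \cite{kak} applies.

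For the concluding clause I would argue as follows. Every $K$-analytic space is Lindel\"of, so the Lindel\"of number $l(C_p(X))$ equals $\w$. By Asanov's theorem $t(X^n)\le l(C_p(X))$ for every $n\in\IN$ (see Arhangel'skii's monograph on $C_p$-theory), and therefore all finite powers $X^n$ are countably tight.
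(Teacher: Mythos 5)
Your proof is correct, and it diverges from the paper's at the decisive step. The construction of the compact resolution is essentially the paper's own: the paper gets $\w^\w\succcurlyeq\E(C(X))$ directly from Lemma~\ref{l:u2} (your detour through Corollary~\ref{c:Lu3} and Theorem~\ref{t:u} reaches the same reduction, since those results rest on Lemma~\ref{l:u2} anyway), and then, exactly as you do, takes pointwise closures $\bar E_\alpha$ in $\IR^X$ to obtain a compact resolution of $C_p(X)$. Where you differ is in upgrading the resolution to $K$-analyticity: the paper invokes Tkachuk's theorem \cite{Tk} (Theorem 9.3 in \cite{kak}), which says that for spaces of the form $C_p(X)$ a compact resolution \emph{by itself} already implies $K$-analyticity --- so the ``delicate point'' you flag is internalized by that theorem and no angelicity is needed. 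You instead route through Theorem~\ref{t:ACFK}$(2)\Ra(4)$ to get angelicity of $C_p(X)$ from the $\w^\w$-domination hypothesis, and then apply the Cascales--Orihuela result that an angelic space with a compact resolution is $K$-analytic (via quasi-Suslin maps; available in \cite{kak}). Both routes are legitimate and both tools are in the cited monograph; yours uses the domination hypothesis twice (once inside Lemma~\ref{l:u2}, once for angelicity) and is slightly longer, but it buys a more general mechanism not tied to the $C_p$-structure, whereas the paper's appeal to Tkachuk is sharper for this specific situation. The concluding clause (K-analytic $\Ra$ Lindel\"of, then Asanov's inequality $t(X^n)\le l(C_p(X))$) coincides with the paper. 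One small inaccuracy in a non-load-bearing aside: Example~\ref{ex:PU} exhibits, under $\w_1=\mathfrak b$, a uniformity with a $\mathfrak G$-base on a badly non-cosmic space; it is not stated there as an example of a compact resolution on a non-Lindel\"of space (though such examples, e.g.\ $[0,\w_1)$ under $\w_1=\mathfrak b$, do exist), so you should either justify that claim or soften the parenthetical.
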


\begin{proof} By Lemma~\ref{l:u2}, $\w^\w\succcurlyeq \U(X)^\w\times C(X)\cong \E(C(X))$. Consequently, there exists a monotone cofinal map $E:\w^\w\to\E(C(X))$ assigning to each $\alpha\in\w^\w$ a pointwise bounded equicontinuous set $E_\alpha\subset C(X)$. Then the closure $\bar E_\alpha$ of $E_\alpha$ in $\IR^X$ is compact and is contained in $C_p(X)$. It follows that $(\bar E_\alpha)_{\alpha\in\w^\w}$ is a compact resolution of $C_p(X)$. By Tkachuk's Theorem \cite{Tk} (see Theorem 9.3 in \cite[p.~203]{kak}), the function space $C_p(X)$ is $K$-analytic and hence Lindel\"of (see Proposition 3.4 in \cite[p.~66]{kak}).
By Asanov's Theorem (see Theorem I.4.1 in \cite{Arch}), finite powers of $X$ have countable tightness.
\end{proof}

Assuming additionally that the space $X$ is separable, we obtain a stronger result.

\begin{theorem}\label{t:G-base->sigma} If the finest uniformity $\U(X)$ of a separable Tychonoff space $X$ has a $\mathfrak G$-base, then the function space $C_p(X)$ is analytic and the space $X$ is cosmic and $\sigma$-compact, in other words, $X$ is a countable union of metrizable compact subspaces.
\end{theorem}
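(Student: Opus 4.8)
The plan is to feed the reduction machinery of Section~\ref{s:Lc} into the descriptive theory of $C_p(X)$ and then to close the argument with a Hurewicz-type dichotomy. Throughout $X$ carries its finest uniformity, for which $C_u(X)=C_\w(X)=C(X)$. First I would use separability to simplify the reductions: since the density of $X$ is $\w$, Theorem~\ref{t:u} gives $\U(X)^\w\times\w^\w\succcurlyeq\E(C(X))\cong\U(X)^\w\times C(X)$, while the hypothesis that $\U(X)$ has a $\mathfrak G$-base means $\w^\w\succcurlyeq\U(X)$, whence $\w^\w\cong(\w^\w)^\w\succcurlyeq\U(X)^\w$. Composing these,
$$\w^\w\cong\w^\w\times\w^\w\succcurlyeq\U(X)^\w\times\w^\w\succcurlyeq\E(C(X))\cong\U(X)^\w\times C(X)\succcurlyeq C(X),$$
so $C(X)$ is $\w^\w$-dominated in $\IR^X$. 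By Proposition~\ref{p:G+dom->K} the space $C_p(X)$ is then $K$-analytic. Because $X$ is separable, restriction to a countable dense set is a continuous injection of $C_p(X)$ into $\IR^\w$, so $C_p(X)$ is submetrizable; by Talagrand's Proposition 6.3 \cite{kak} a submetrizable $K$-analytic space is analytic, hence $C_p(X)$ is analytic.

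Next I would descend from $C_p(X)$ to $X$. An analytic space is a continuous image of a Polish space, hence cosmic, so $C_p(X)$ is cosmic, and by Arhangel'skii's duality (Theorem I.1.3 \cite{Arch}) the space $X$ itself is cosmic. Consequently $X$ is submetrizable, every compact subspace of $X$ has a countable network and is therefore metrizable, and, being Lindel\"of and regular, $X$ is normal. It thus remains only to prove that $X$ is $\sigma$-compact: writing $X$ as a countable union of compacta then exhibits it as a countable union of \emph{metrizable} compacta, which is the desired conclusion.

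For $\sigma$-compactness the strategy is to show that $X$ is analytic and contains no closed copy of $\w^\w$. Analyticity of $X$ I would obtain from that of $C_p(X)$ together with submetrizability, via the $C_p$-duality that transfers the relevant covering property from $C_p(X)$ back to $X$ and Talagrand's characterization ``analytic $=$ $K$-analytic $+$ submetrizable''. The engine is then the Hurewicz-type dichotomy \cite{kak}: an analytic space that fails to be $\sigma$-compact contains a closed subspace homeomorphic to $\w^\w$. Suppose $X$ were not $\sigma$-compact, and let $C\subset X$ be closed with $C\cong\w^\w$. As $X$ is normal, the restriction $C(X)\to C(C)$ is surjective by Tietze's theorem, and it carries a monotone $\w^\w$-dominating family $\{f_\alpha\}_{\alpha\in\w^\w}$ for $C(X)$ to the monotone family $\{f_\alpha|_C\}_{\alpha\in\w^\w}$: given $g\in C(C)$, extend it to $\tilde g\in C(X)$, choose $\alpha$ with $\tilde g\le f_\alpha$, and note $g\le f_\alpha|_C$. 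Hence $C(\w^\w)$ would be $\w^\w$-dominated in $\IR^{\w^\w}$; but by Theorem~\ref{t:s->dom} this is equivalent to $\w^\w$ being $\sigma$-compact, which is false. This contradiction forces $X$ to be $\sigma$-compact.

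The hard part will be the third paragraph, specifically the passage from ``$C_p(X)$ is analytic'' to the descriptive structure of $X$ itself and the invocation of the correct non-metrizable form of the Hurewicz dichotomy; everything else is routine once the reductions of Theorem~\ref{t:u} are in place. A useful alternative framing, should the dichotomy on $X$ prove awkward, is to observe that a cosmic (Lindel\"of, realcompact) space is a $\mu$-space, so for $X$ the properties $\sigma$-compact and $\sigma$-bounded coincide; excluding a closed copy of $\w^\w$ then amounts to excluding a non-$\sigma$-bounded $\w^\w$-like closed subspace, which one can attack through a metrizable condensation and Theorem~\ref{t:dominat}.
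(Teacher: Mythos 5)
Your first two paragraphs coincide, up to routing through Proposition~\ref{p:G+dom->K}, with the paper's own proof: the reduction $\w^\w\succcurlyeq\U(X)^\w\times\w^\w\succcurlyeq\E(C(X))$ from Lemma~\ref{l:u2}/Theorem~\ref{t:u}, the compact resolution of $C_p(X)$ by closures of pointwise bounded equicontinuous sets, Tkachuk's theorem for $K$-analyticity, submetrizability of $C_p(X)$ from separability of $X$, Talagrand's Proposition 6.3 of \cite{kak} for analyticity, and the duality giving that $X$ is cosmic. All of this is correct and is exactly how the paper proceeds.

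The final step is where you diverge, and it contains a genuine gap. The paper finishes in one stroke by citing Calbrix's Theorem 9.7 of \cite{kak}: for an \emph{arbitrary} Tychonoff space $X$, analyticity of $C_p(X)$ already implies $\sigma$-compactness of $X$; no descriptive analysis of $X$ itself is required. You instead attempt to re-derive this via a Hurewicz dichotomy applied to $X$, and both inputs of that plan are unsupported. First, you never establish that $X$ is analytic: the sketched justification (``$C_p$-duality transfers the covering property back to $X$, then Talagrand'') is circular, since Talagrand's characterization needs $X$ to be $K$-analytic, and the only visible route to (K-)analyticity of $X$ here is to write $X$ as a countable union of metrizable compacta --- the very conclusion being proved; note that $K$-analyticity of $C_p(X)$ does not transfer to $X$, and indeed Leiderman's Example 2 of \cite{Lei} (cited in the remark after Theorem~\ref{t:ACFK}) gives a Lindel\"of $X$ with $C_p(X)$ $K$-analytic that is not even $\sigma$-bounded, so no domination-style argument can close this step --- full analyticity of $C_p(X)$ plus Calbrix is genuinely what is used. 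Second, the Hurewicz dichotomy you invoke (analytic and not $\sigma$-compact implies a closed copy of $\w^\w$) is a theorem about analytic subsets of Polish spaces, whereas your $X$ is merely cosmic and possibly non-metrizable; you concede this yourself (``the hard part''), and the needed non-metrizable form is never supplied or cited. Your conditional pieces --- Tietze extension on the normal cosmic space $X$ to restrict a dominating family from $\IR^X$ to $\IR^C$, and Theorem~\ref{t:s->dom} to rule out a closed copy $C\cong\w^\w$ --- are fine as far as they go, but they hang entirely on the two missing claims. The repair is simply to replace your third paragraph by the citation of Calbrix's theorem, as the paper does.
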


\begin{proof} By Lemma~\ref{l:u2}, $\w^\w\succcurlyeq \U(X)^\w\times \w^\w\cong \E(C(X))$. Consequently, there exists a monotone cofinal map $E:\w^\w\to\E(C(X))$ assigning to each $\alpha\in\w^\w$ a pointwise bounded equicontinuous set $E_\alpha\subset C(X)$. Then the closure $\bar E_\alpha$ of $E_\alpha$ in $\IR^X$ is compact and is contained in $C_p(X)$. It follows that $(\bar E_\alpha)_{\alpha\in\w^\w}$ is a compact resolution of $C_p(X)$. Applying as before  Tkachuk's Theorem \cite{Tk}, we get that the function space $C_p(X)$ is $K$-analytic. The separability of the space $X$ implies the submetrizability of the function space $C_p(X)$. By Talagrand's Proposition 6.3 \cite[p.~144]{kak}, the submetrizable $K$-analytic space $C_p(X)$ is analytic and hence cosmic. By Proposition 9.2 \cite[p.~207]{kak} and Calbrix's Theorem 9.7 \cite[p.~206]{kak}, the space $X$ is cosmic and $\sigma$-compact.
\end{proof}

Combining Theorem~\ref{t:G-base->sigma} with Theorem~\ref{t:ws} we get the following  result holding under the set theoretic assumption $\w_1<\mathfrak b$.

\begin{corollary}\label{c:w1<b+n} Assume that $\w_1<\mathfrak b$. If the finest uniformity $\U(X)$ of a Tychonoff space $X$ is $\w$-narrow and has a $\mathfrak G$-base, then the space $X$ is cosmic and $\sigma$-compact.
\end{corollary}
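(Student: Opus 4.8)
The plan is to assemble the conclusion from the two theorems already proved under $\w_1<\mathfrak b$, namely Theorem~\ref{t:ws}, which controls the countable power $X^\w$, and Theorem~\ref{t:G-base->sigma}, which settles the separable case. The only genuine link to forge is the separability of $X$, and this will be delivered by the weak cosmicity of $X^\w$.

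First I would invoke Theorem~\ref{t:ws}. Since $X$ is Tychonoff, its finest uniformity $\U(X)$ is compatible with the topology of $X$, so the assumption that $\U(X)$ is $\w$-narrow and has a $\mathfrak G$-base is exactly the hypothesis of Theorem~\ref{t:ws} with $\U=\U(X)$. Hence, under $\w_1<\mathfrak b$, the countable power $X^\w$ is weakly cosmic. Next I would extract separability: by Tkachenko's results quoted above (\cite{Tka}), every weakly cosmic space is hereditarily separable, so in particular $X^\w$ is separable. As the coordinate projection $\pr_0:X^\w\to X$ is a continuous surjection, $X$ is a continuous image of a separable space and is therefore itself separable.

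Finally I would apply Theorem~\ref{t:G-base->sigma} to the separable Tychonoff space $X$ whose finest uniformity $\U(X)$ has a $\mathfrak G$-base. That theorem then yields directly that $X$ is cosmic and $\sigma$-compact, which is precisely the desired conclusion.

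Since the argument is an assembly of previously established results, there is no single hard computational obstacle; it is rather a matter of making the hypotheses transfer correctly. The point demanding care is twofold: first, checking that $\U(X)$ indeed generates the topology of $X$, so that Theorem~\ref{t:ws} is applicable; and second, passing from $X^\w$ back to $X$ in a way that yields exactly the property needed downstream, namely separability. I would emphasize that one should route this through the projection $\pr_0$ and the separability implied by weak cosmicity, rather than attempting any hereditary descent of weak cosmicity to $X$ itself, which is unnecessary for feeding Theorem~\ref{t:G-base->sigma}.
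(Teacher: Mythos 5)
Your proposal is correct and matches the paper's own proof, which is exactly the combination of Theorem~\ref{t:ws} (to obtain weak cosmicity and hence separability, since every weakly cosmic space is hereditarily separable by \cite{Tka}) with Theorem~\ref{t:G-base->sigma} applied to the now-separable space $X$. Your detour through $X^\w$ and the projection $\pr_0$ is a harmless variant --- one could just as well apply Lemma~\ref{l:ws} to $X$ itself --- but it is the same argument in substance.
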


Combining Corollary~\ref{c:w1<b+n} with Corollary~\ref{c:narrow} we get another consistency result.

\begin{corollary}\label{c:w1<b} Assume that $\w_1<\mathfrak b$. If the finest uniformity $\U(X)$ of a Tychonoff space $X$ has a $\mathfrak G$-base and the set $C(X)$ is $\w^\w$-dominated in $\IR^X$, then the space $X$ is cosmic and $\sigma$-compact.
\end{corollary}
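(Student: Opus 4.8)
The plan is to derive this directly by chaining the two preceding corollaries, so that the only real content is verifying that the hypothesis on $C(X)$ feeds correctly into Corollary~\ref{c:narrow}. We are given that $\w_1<\mathfrak b$, that the finest uniformity $\U(X)$ has a $\mathfrak G$-base, and that $C(X)$ is $\w^\w$-dominated in $\IR^X$. Since Corollary~\ref{c:w1<b+n} already delivers the conclusion (cosmic and $\sigma$-compact) from $\w_1<\mathfrak b$, a $\mathfrak G$-base, and $\w$-narrowness, it suffices to establish that $\U(X)$ is $\w$-narrow, and for this I would invoke Corollary~\ref{c:narrow}.

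The one point requiring care is that Corollary~\ref{c:narrow} is stated in terms of the $\w$-continuous function space $C_\w(X)$, whereas the hypothesis is about $C(X)$. Here I would use the general inclusions $C_u(X)\subset C_\w(X)\subset C(X)$ noted in Section~\ref{s:pre}, together with the observation recorded there that for a topological space $X$ equipped with its finest uniformity $\U(X)$ one has $C_u(X)=C(X)$. Squeezing $C_\w(X)$ between $C_u(X)$ and $C(X)$ forces $C_u(X)=C_\w(X)=C(X)$ in this case. Consequently the assumption that $C(X)$ is $\w^\w$-dominated in $\IR^X$ is literally the statement that $C_\w(X)$ is $\w^\w$-dominated in $\IR^X$, which is exactly the hypothesis of Corollary~\ref{c:narrow}.

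With that identification in hand, Corollary~\ref{c:narrow} yields that $X$, viewed as a uniform space under $\U(X)$, is $\w$-narrow. Now all hypotheses of Corollary~\ref{c:w1<b+n} are met: $\w_1<\mathfrak b$, the uniformity $\U(X)$ is $\w$-narrow, and it has a $\mathfrak G$-base. Applying that corollary gives that $X$ is cosmic and $\sigma$-compact, completing the argument. I do not expect any genuine obstacle here; the proof is a short composition of earlier results, and the only subtlety worth stating explicitly is the coincidence $C_\w(X)=C(X)$ for the finest uniformity, which is what licenses passing from the stated hypothesis on $C(X)$ to the $\w$-narrowness criterion.
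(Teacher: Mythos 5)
Your proof is correct and is exactly the paper's argument: the paper proves this corollary precisely by combining Corollary~\ref{c:narrow} with Corollary~\ref{c:w1<b+n}. Your explicit verification that $C_u(X)\subset C_\w(X)\subset C(X)$ collapses to $C_\w(X)=C(X)$ for the finest uniformity is the (implicit) detail the paper leaves to the reader, and you have handled it correctly.
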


Example~\ref{ex:PU} shows that Corollary~\ref{c:w1<b+n} is not true in ZFC.

\begin{problem}\label{prob6.5} Is Corollary~\ref{c:w1<b}  true in ZFC?
\end{problem}

\begin{remark}\label{rem:notC} The $P$-space $X$ constructed in Example~\ref{ex:PU} does not provide a counterexample to Problem~\ref{prob6.5} since it has uncountable tightness, which implies that the set $C(X)$ is not $\w^\w$-dominated in $\IR^X$ (according to  Proposition~\ref{p:G+dom->K}).
\end{remark}


\section{Free (locally convex) topological vector spaces}\label{s:Lin+Lc}

In this section we apply the results of the preceding sections and characterize topological spaces $X$ whose free (locally convex) topological vector spaces have local $\mathfrak G$-bases.
Simultaneously, we shall characterize Tychonoff $k$-spaces $X$ whose double function spaces $C_k(C_k(X))$ have a $\mathfrak G$-base.

For a Tychonoff topological space $X$ by $C_k(X)$ we denote the space of  all continuous real-valued functions on $X$ endowed with the compact-open topology.

Following \cite{BG}, we define a topological space $X$ to be {\em Ascoli} if every compact subset $K$ of $C_k(X)$ is equicontinuous. This is equivalent to saying that the evaluation map $e:K\times X\to \IR$, $e:(f,x)\mapsto f(x)$, is jointy continuous.
By \cite{BG}, the class of Ascoli spaces contains all $k$-spaces and all Tychonoff $k_\IR$-spaces.

\begin{theorem}\label{t:Lin+Lc} Let $X$ be a Tychonoff space. Then
\begin{enumerate}
\item the following conditions are equivalent:
\begin{enumerate}
\item[$(\Lc)$] the free locally convex space $\Lc(X)$ of $X$ has a local $\mathfrak G$-base;
\item[$(\Lin)$] the free topological vector space $\Lin(X)$ of $X$ has a local $\mathfrak G$-base;
\item[$(\U \mathsf C)$] the finest uniformity $\U(X)$ of $X$ has a $\mathfrak G$-base and the poset $C(X)$ is $\w^\w$-dominated;
\item[$(\U \hat{\mathsf C})$] the uniformity $\U(X)$ has a $\mathfrak G$-base and $C(X)$ is $\w^\w$-dominated in $\IR^X$.
\end{enumerate}
\item
The conditions $(\Lc),(\Lin)$ imply that every metrizable continuous image of $X$ is $\sigma$-compact, the function space $C_p(X)$ is $K$-analytic, and  all finite powers of $X$ are countably tight.
\item
If the space $X$ is separable or $\w_1<\mathfrak b$, then the conditions  $(\Lc),(\Lin)$  imply that $X$ is a countable union of compact metrizable spaces.
\item If the space $X$ is separable, then the conditions $(\Lc),(\Lin)$ are equivalent to:
\begin{itemize}
\item[$(\U)$]  the finest uniformity $\U(X)$ of $X$ has a $\mathfrak G$-base.
\end{itemize}
\item
If the space $X$ is closed-$\bar G_\delta$ and strongly open-Reznichenko (which happens if $X$ is first-countable and perfectly normal), then $(\Lc),(\Lin)$  are equivalent to:
\begin{itemize}
\item[$(M\sigma)$] $X$ is metrizable and $\sigma$-compact.
\end{itemize}
\item
If the space $X$ is countable, then the conditions $(\Lc),(\Lin)$ are equivalent to:
\begin{itemize}
\item[$(\mathfrak G)$] the space $X$ has a local $\mathfrak G$-base.
\end{itemize}
\item
If the space $X$ is Ascoli (in particular, a $k$-space), then  $(\Lc),(\Lin)$  are equivalent to:
\begin{itemize}
\item[$(C_k^2)$] the double function space $C_k(C_k(X))$ has a local $\mathfrak G$-base.
\end{itemize}
\item If $\w_1<\mathfrak b$, then the conditions $(\Lc),(\Lin)$ are equivalent to:
\begin{itemize}
\item[$(\w\U)$]  the finest uniformity $\U(X)$ of $X$ is $\w$-narrow and has a $\mathfrak G$-base.
\end{itemize}
\item If $\w_1=\mathfrak b$, then  the conditions $(\Lc),(\Lin)$ are not equivalent to $(\w\U)$.

\end{enumerate}
\end{theorem}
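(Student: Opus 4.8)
The plan is to refute the equivalence by producing, under $\w_1=\mathfrak b$, a single Tychonoff space $X$ that witnesses $(\w\U)$ while failing both $(\Lc)$ and $(\Lin)$. The space I would use is the non-separable $P$-space of Example~\ref{ex:PU}, namely the uncountable $\sigma$-product of doubletons carrying the $\w$-box topology. Example~\ref{ex:PU} already records that its finest uniformity $\U(X)$ is $\w$-narrow and, precisely when $\w_1=\mathfrak b$, has a $\mathfrak G$-base; so $X$ satisfies $(\w\U)$ verbatim, and this half requires no further argument.

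The decisive step is to show that this same $X$ fails $(\Lc),(\Lin)$, for which I would invoke the countable-tightness consequence recorded in part~(2) of the present theorem (whose engine is Proposition~\ref{p:G+dom->K}). Because $X$ is a non-discrete $P$-space, every countable subset is closed: the complement of a countable set is a countable intersection of open co-singletons, hence open in a $P$-space. Consequently, fixing any non-isolated point $z\in X$ and putting $A=X\setminus\{z\}$, one has $z\in\bar A$, yet $z$ lies in the closure of no countable subset of $A$; thus $X$ has uncountable tightness. By part~(2), however, either of $(\Lc),(\Lin)$ would force all finite powers of $X$, and in particular $X$ itself, to be countably tight. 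This contradiction shows that $X$ satisfies neither $(\Lc)$ nor $(\Lin)$; equivalently, since $\U(X)$ does have a $\mathfrak G$-base here, the poset $C(X)$ cannot be $\w^\w$-dominated in $\IR^X$, so condition $(\U\hat{\mathsf C})$ fails and the equivalences of part~(1) rule out $(\Lc)$ and $(\Lin)$.

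Combining the two steps, the space $X$ witnesses $(\w\U)$ while violating $(\Lc),(\Lin)$, so these conditions cannot be equivalent under $\w_1=\mathfrak b$. I do not expect a genuine obstacle here, as both halves reduce to facts already in hand; the one point deserving care is the $\mathfrak G$-base clause of $(\w\U)$, which relies on the monotone cofinal map $\w^\w\succcurlyeq\mathfrak b=\w_1$ that exists precisely because $\mathfrak b=\w_1$ --- exactly the computation carried out inside Example~\ref{ex:PU}. The uncountable tightness of $X$ (and the resulting failure of $\w^\w$-domination of $C(X)$) is moreover already flagged in Remark~\ref{rem:notC}.
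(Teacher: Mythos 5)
Your write-up does not prove the statement as posed: Theorem~\ref{t:Lin+Lc} comprises nine items, and you address only item (9), moreover by quoting items (1) and (2) of the very theorem as known facts. Those items carry essentially all of the technical weight of the result: (1) rests on the chain of reductions $\U(X)^\w\times D^\w\succcurlyeq\Tau_0(\Lin(X))\succcurlyeq\Tau_0(\Lc(X))\cong\U(X)^\w\times C(X)$ from Theorems~\ref{t:u}, \ref{t:lin-top} and \ref{t:Lin+Lc-reduct} together with Corollary~\ref{c:Lin-dom}; (2) rests on Theorem~\ref{t:dominat} and Proposition~\ref{p:G+dom->K}; and items (3)--(8) additionally require Theorem~\ref{t:G-base->sigma}, Corollaries~\ref{c:sepun} and \ref{c:w1<b}, the metrization Theorem~\ref{t:metr}, Theorem~\ref{t:ws}, the result of \cite{LPT} for countable spaces, and the Ferrando--K\c{a}kol theorem \cite{feka} for the Ascoli case. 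None of this appears in your proposal, so at best you have a proof of item (9) \emph{conditional} on items (1)--(2); that conditioning is legitimate in structure --- the paper, too, proves (9) last and uses the earlier items --- but it leaves the bulk of the theorem unproved.

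For item (9) taken in isolation, your argument is correct and coincides with the paper's own: the paper likewise takes the space of Example~\ref{ex:PU} as the witness of $(\w\U)$ under $\w_1=\mathfrak b$, and then cites Remark~\ref{rem:notC}, i.e., the fact that uncountable tightness together with Proposition~\ref{p:G+dom->K} (equivalently, part (2)) forbids $C(X)$ from being $\w^\w$-dominated in $\IR^X$, so $(\U\hat{\mathsf C})$ and hence $(\Lc),(\Lin)$ fail by the equivalences of part (1). The one genuine addition in your text is the explicit verification that this non-discrete $P$-space has uncountable tightness --- in a $T_1$ $P$-space every countable set is closed, so no non-isolated point lies in the closure of a countable subset of its complement --- a detail the paper asserts in Remark~\ref{rem:notC} without proof; it is correct, and the required non-isolated point exists since every basic ball $U_\alpha[x]$ in Example~\ref{ex:PU} contains points differing from $x$ only on coordinates above $\alpha$. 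You also correctly identify that the $\mathfrak G$-base half of $(\w\U)$ is exactly where the hypothesis $\w_1=\mathfrak b$ enters, via the monotone cofinal map $\w^\w\to\w_1$ constructed inside Example~\ref{ex:PU}.
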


\begin{proof} Identify the topological space $X$ with the uniform space $X$ equipped with the finest uniformity $\U(X)$ on $X$.
 Then $\Lin(X)=\Lin_u(X)$, $\Lc(X)=\Lc_u(X)$, and $C_u(X)=C_\w(X)=C(X)$.
\smallskip

1. The implications $(\U\hat{\mathsf  C})
\Ra(\Lin)\Ra(\Lc)\Ra(\U \mathsf{C})$ follows from Corollary~\ref{c:Lin-dom} and the  reductions
$$\Tau_0(\Lin(X))\succcurlyeq \Tau_0(\Lc(X))\cong \U(X)^\w\times C(X),$$
 established in Theorem~\ref{t:Lin+Lc-reduct}.  The implication $(\U \mathsf C)\Ra(\U  \hat{\mathsf C})$ is trivial. This establishes the equivalences
$(\U\mathsf C)\Leftrightarrow (\U \hat {\mathsf C})\Leftrightarrow(\Lin)\Leftrightarrow(\Lc)$.
\smallskip

2. By Theorems~\ref{t:dominat} and  Proposition~\ref{p:G+dom->K}, the equivalent conditions $(\Lc),(\Lin),(\U\hat{\mathsf C})$ imply that any continuous metrizable image of $X$ is $\sigma$-compact, the function space $C_p(X)$ is $K$-analytic and all finite powers of $X$ are countably tight.
\smallskip

3. If $X$ is separable or $\w_1<\mathfrak b$, then by  Theorem~\ref{t:G-base->sigma} or Corollary~\ref{c:w1<b}, the condition $(\U\hat{\mathsf C})$ implies that $X$ is a countable union of metrizable compact subsets.\smallskip

4. If the space $X$ is separable, then by Corollary~\ref{c:sepun}, $(\U)\Ra(\Lc)$. Combining this implication with the equivalences $(\Lc)\Leftrightarrow(\Lin)\Leftrightarrow(\U\mathsf C)\Leftrightarrow(\U\hat{\mathsf C})\Ra(\U)$, we obtain the desired equivalences $(\Lc)\Leftrightarrow(\Lin)\Leftrightarrow(\U\mathsf C)\Leftrightarrow(\U\hat{\mathsf C})\Leftrightarrow(\U)$.
\smallskip

5. If the space $X$ is closed-$\bar G_\delta$ and strongly open-Reznichenko, then we can apply Theorems~ \ref{t:metr} and \ref{t:dominat} to conclude that $(\U\hat{\mathsf C})\Ra(M\sigma)$. On the other hand, the implication $(M\sigma)\Ra(\U\hat{\mathsf C})$ follows from Proposition~\ref{p:sigma'} and Theorem~\ref{t:t:s->dom}.
\smallskip

6. If the space $X$ is countable, then $(\mathfrak G)\Leftrightarrow(\U)$ by \cite{LPT}. Since $X$ is separable, we have also the equivalences $(\U)\Leftrightarrow (\Lc)\Leftrightarrow(\Lin)$.
\smallskip

7. Assume that the space $X$ is Ascoli. In this case every compact subset of $C_k(X)$ is equicontinuous, which implies that the space $C_k(C_k(X))$ carries the topology of uniform convergence on pointwise bounded equicontinuous sets and hence $\Lc(X)=\Lc_u(X)\subset C_u^*(X)\subset C_k(C_k(X))$. If the space $C_k(C_k(X))$ has a local $\mathfrak G$-base, then so does its subspace $\Lc(X)$, which means that $(C_k^2)\Ra(\Lc)$.

Now assume conversely that the free locally space $\Lc(X)$ has a local $\mathfrak G$-base. By Theorem \ref{t:u} we have the equivalence $\E(C_u(X))\cong \w^{\w}$. The Ascoli property of $X$ implies that $\K(C_k(X))\cong \E(C_u(X))\cong\w^\w$. We obtained that
the function space $C_k(X)$ has a compact resolution swallowing compact sets and hence,
according to a theorem of Ferrando and K{\c{a}}kol \cite{feka},
the space $C_k(C_k(X))$ has a local $\mathfrak G$-base.
\smallskip

8.  Corollary~\ref{c:narrow} yields the implication $(\U\hat{\mathsf C})\Ra(\w\U)$. Now assume that $(\w\U)$ holds.  If $\w_1<\mathfrak b$, then by Theorem~\ref{t:ws}, the space $X$ is separable. Applying Corollary~\ref{c:sepun} we conclude that the space $\Lc(X)$ has a local $\mathfrak G$-base. So, $(\w\U)\Ra(\Lc)\Leftrightarrow(\Lin)$.

9. By Example~\ref{ex:PU}, under $\w_1=\mathfrak b$ there exists a non-discrete Tychonoff $P$-space $X$ satisfying the condition $(\w\U)$. By Remark~\ref{rem:notC}, the set $C(X)$ is not $\w^\w$-dominated in $\IR^X$, which means that $X$ does not satisfy the condition $(\U\hat{\mathsf C})$ and hence does not satisfy the conditions $(\Lc)$, $(\Lin)$, equivalent to $(\U\hat{\mathsf C})$.
\end{proof}

\begin{remark} By \cite{BL2}, there exists a sequential countable $\aleph_0$-space $X$ with a unique non-isolated point which fails to have a local $\mathfrak G$-base. For this space $X$ the spaces $\Lc(X)$ and $\Lin(X)$ do not have local $\mathfrak G$-bases.
\end{remark}

The following conclusion  follows immediately from Theorem \ref{t:Lin+Lc}.
\begin{corollary}\label{cor:metr} For a metrizable space $X$ its free locally convex space
 $\Lc(X)$ has a local $\mathfrak G$-base if and only if its free topological vector space $\Lin(X)$ has a local $\mathfrak G$-base if and only if the space
$X$ is $\sigma$-compact.
\end{corollary}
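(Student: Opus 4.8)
The plan is to read off both equivalences directly from Theorem~\ref{t:Lin+Lc}. The equivalence $(\Lc)\Leftrightarrow(\Lin)$ holds for \emph{every} Tychonoff space by part~(1) of that theorem, so the only work is to tie these conditions to $\sigma$-compactness under the metrizability hypothesis.

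First I would observe that every metrizable space is simultaneously first-countable and perfectly normal, and hence (as noted parenthetically in the statement of Theorem~\ref{t:Lin+Lc}(5)) is both closed-$\bar G_\delta$ and strongly open-Reznichenko. This places $X$ squarely in the scope of Theorem~\ref{t:Lin+Lc}(5), which yields $(\Lc)\Leftrightarrow(M\sigma)$, where $(M\sigma)$ asserts that $X$ is metrizable and $\sigma$-compact. Since $X$ is metrizable by assumption, the condition $(M\sigma)$ collapses to ``$X$ is $\sigma$-compact'', completing the chain $(\Lc)\Leftrightarrow(\Lin)\Leftrightarrow$ ``$X$ is $\sigma$-compact''.

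There is no genuine obstacle here; the only point requiring a moment's care is the verification that metrizability supplies both hypotheses of Theorem~\ref{t:Lin+Lc}(5), which is routine. Alternatively, one could bypass part~(5) and argue straight from the equivalence $(\Lc)\Leftrightarrow(\U\mathsf C)$ of part~(1): in the forward direction the $\w^\w$-domination of $C(X)$ gives $\sigma$-compactness of $X$ via Theorem~\ref{t:s->dom}, while for the converse one combines Theorem~\ref{t:s->dom} (to recover the $\w^\w$-domination of $C(X)$) with Proposition~\ref{p:sigma'} applied to the set of non-isolated points of $X$, which is closed in $X$ and therefore $\sigma$-compact whenever $X$ is, thereby recovering the $\mathfrak G$-base of $\U(X)$.
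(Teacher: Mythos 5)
Your proposal is correct and takes essentially the same route as the paper: the paper derives the corollary as an immediate consequence of Theorem~\ref{t:Lin+Lc}, namely the equivalence $(\Lc)\Leftrightarrow(\Lin)$ from part~(1) together with $(\Lc)\Leftrightarrow(M\sigma)$ from part~(5), whose hypotheses a metrizable space satisfies for exactly the reasons you give. Your alternative argument via $(\U\mathsf C)$, Theorem~\ref{t:s->dom} and Proposition~\ref{p:sigma'} is not genuinely different either --- it is precisely the paper's internal proof of the implication $(M\sigma)\Rightarrow(\U\hat{\mathsf C})$ in Theorem~\ref{t:Lin+Lc}(5), unwound.
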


It is interesting to mention that the equivalence $(\mathfrak G)\Leftrightarrow(\U)$ (holding for countable spaces) can not be  generalized to cosmic $\sigma$-compact spaces.

\begin{example}\label{e:cosmic} There exists a cosmic $\sigma$-compact space $X$ which is first-countable
 (and hence has a local $\mathfrak G$-base) but its finest uniformity $\U(X)$ fails to have a $\mathfrak G$-base.
\end{example}

\begin{proof} By \cite[4.11]{Ban}, there exists  a first-countable comic $\sigma$-compact space $X$, which is not an $\aleph_0$-space and hence is not metrizable. Being cosmic and $\sigma$-compact, the space $X$ can be written as a countable union of metrizable compact subsets. So, it is hereditarily Lindel\"of and hence closed-$\bar G_\delta$. Being first-countable, the space $X$ is strongly open-Reznichenko. Since $X$ is not metrizable, we can apply Theorem~\ref{t:metr} and conclude that the finest uniformity of $X$ fails to have a $\mathfrak G$-base.
\end{proof}

\begin{remark} Since every Tychonoff $k$-space is Ascoli, the equivalence $(\Lc)\Leftrightarrow(C_k^2)$ proved in Theorem~\ref{t:Lin+Lc} answers affirmatively Question 4.18 posed in \cite{GabKakLei_2}.
\end{remark}

\begin{remark} Free topological vector spaces $\Lin(X$) are introduced and studied in a recent preprint \cite{GabMor}, but with completely different approach and results.
Note also that several results about the existence of local $\mathfrak G$-bases in free locally convex spaces  were proved independently (and using different arguments than ours) in \cite{GK_L(X)}.
In particular, the authors of  \cite{GK_L(X)} proved the equivalence $(\Lc)\Leftrightarrow(M\sigma)$ for metrizable spaces $X$ and the equivalence $(\Lc)\Leftrightarrow (\mathfrak G)$ for countable Ascoli spaces $X$.
\end{remark}

Having in mind the equivalence $(\Lc)\Leftrightarrow(\Lin)$ in Theorem~\ref{t:Lin+Lc}, we can ask the following question.

\begin{problem}\label{not_equal}
 Does there exist a uniform space $X$ such that $\Lc_u(X)$ has a local $\mathfrak G$-base,
 but $\Lin_u(X$) fails to have a local $\mathfrak G$-base?
\end{problem}

\section{Uniform quotients and inductive limits}

We say that a surjective map $f:X\to Y$ between uniform spaces is {\em uniformly quotient} if an entourage $U\subset Y\times Y$ belongs to the uniformity $\U(Y)$ if and only if its preimage $(f\times f)^{-1}(U)=\{(x,y)\in X\times X:(f(x),f(y))\in U\}$ belongs to the uniformity $\U(X)$.
This definition implies that each uniformly quotient map between uniform spaces is uniformly continuous.

\begin{lemma}\label{l:quot} For a uniformly quotient map $f:X\to Y$ between uniform spaces the induced linear operators $\Lc f:\Lc_u(X)\to\Lc_u(Y)$ and $\Lin f:\Lin_u(X)\to\Lin_u(Y)$ are open.
\end{lemma}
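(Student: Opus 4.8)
The plan is to treat both operators simultaneously via the universal property, reducing openness to the identification of a quotient topology. Write $q$ for either $\Lc f$ or $\Lin f$, let $E$ denote $\Lc_u(X)$ (resp.\ $\Lin_u(X)$) and $F$ denote $\Lc_u(Y)$ (resp.\ $\Lin_u(Y)$). Since $f$ is surjective, $\delta(Y)$ linearly generates $F$, and $q(\delta_x)=\delta_{f(x)}$, the operator $q$ is a continuous linear surjection. Because $F$ is Hausdorff, $\ker q=q^{-1}(0)$ is a closed linear subspace of $E$, so the quotient topology $\tau_q$ on $F$ (declared by: $W$ is $\tau_q$-open iff $q^{-1}(W)$ is open in $E$) turns $F\cong E/\ker q$ into a Hausdorff topological vector space, locally convex in the $\Lc$ case since the quotient of a locally convex space by a closed subspace is locally convex. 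The canonical projection onto a topological-vector-space quotient is always open, so $q$ is open as a map into $(F,\tau_q)$; hence it suffices to prove that $\tau_q$ coincides with the original free topology of $F$.

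One inclusion is automatic: $q$ is continuous for the free topologies, so the free topology of $F$ is one of the vector topologies making $q$ continuous and is therefore coarser than the finest such topology $\tau_q$. For the reverse inclusion I would invoke the universal property of the free object, recalling that the free (locally convex) topological vector space of $Y$ carries the strongest (locally convex) vector topology for which $\delta_Y\colon Y\to F$ is uniformly continuous. Thus it remains only to verify that $\delta_Y\colon Y\to(F,\tau_q)$ is uniformly continuous; once this is done, the universal property produces a continuous linear operator $F\to(F,\tau_q)$ restricting to $\delta_Y$ on $Y$, and since it fixes the generating set $\delta(Y)$ it is the identity, so the identity map $F\to(F,\tau_q)$ is continuous, giving $\tau_q\subseteq$ free topology and thus equality.

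The heart of the argument — and the only place where the hypothesis that $f$ is uniformly quotient is used — is this uniform continuity of $\delta_Y$ into $(F,\tau_q)$. Given a $\tau_q$-neighborhood $W$ of zero, its preimage $q^{-1}(W)$ is by definition a neighborhood of zero in $E$, so uniform continuity of $\delta_X\colon X\to E$ yields an entourage $U\in\U(X)$ with $\delta_x-\delta_{x'}\in q^{-1}(W)$, and hence $\delta_{f(x)}-\delta_{f(x')}=q(\delta_x-\delta_{x'})\in W$, for all $(x,x')\in U$. I would then push $U$ forward by setting $V=(f\times f)(U)$. Since $U\subseteq(f\times f)^{-1}(V)$ and $\U(X)$ is a filter, $(f\times f)^{-1}(V)\in\U(X)$, so the uniformly quotient property gives $V\in\U(Y)$; moreover $V$ contains the diagonal of $Y$ by surjectivity of $f$. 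Finally, any $(a,b)\in V$ lifts to some $(x,x')\in U$ with $f(x)=a$ and $f(x')=b$, whence the difference $\delta_a-\delta_b=\delta_{f(x)}-\delta_{f(x')}$ lies in $W$. This shows that $\delta_Y$ is uniformly continuous into $(F,\tau_q)$, completing the plan.

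I expect the main obstacle to be largely bookkeeping: confirming that $\tau_q$ is genuinely a Hausdorff (locally convex) vector topology and identifying it with $E/\ker q$, together with the verification that $(f\times f)(U)$ is an entourage of $Y$. This last point is exactly the ``surjectivity $+$ filter $+$ quotient property'' combination above, and it is precisely the step that fails for a merely uniformly continuous $f$. An alternative, more computational route would bypass the universal property and show directly, using the explicit neighborhood bases from Theorem~\ref{t:lin-top} and Theorem~\ref{t:u}, that $q\big(\sum_n U_n+\sum_n\tfrac1{\varphi_n}X\big)$ contains $\sum_n(f\times f)(U_n)+\sum_n\tfrac1{\psi_n}Y$ for a suitable $(\psi_n)$; but controlling the scalar (``$\tfrac1{\varphi_n}$'') part forces one to dominate the fibrewise infima $\inf_{\tilde c\in f^{-1}(c)}\varphi_n(\tilde c)$ by functions in a dominating family on $Y$, which is exactly the technical friction the universal-property argument avoids.
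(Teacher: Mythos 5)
Your proof is correct and takes essentially the same route as the paper: both arguments reduce openness to showing that the free topology on $\Lc_u(Y)$ (resp.\ $\Lin_u(Y)$) coincides with the quotient topology induced by the operator, and your continuous identity map $F\to(F,\tau_q)$ is exactly the paper's inverse $\tilde f^{-1}:\Lc_u(Y)\to\Lc_u(X)/\ker(\Lc f)$ obtained from the universal property, under the canonical isomorphism $(F,\tau_q)\cong\Lc_u(X)/\ker(\Lc f)$. The only difference is one of bookkeeping: you verify the uniform continuity of $\delta_Y$ into $(F,\tau_q)$ concretely via pushed-forward entourages $(f\times f)(U)$, whereas the paper deduces it abstractly from the uniform continuity of $q\circ\delta_X=\tilde f^{-1}\circ\delta_Y\circ f$ together with the uniformly quotient hypothesis.
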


\begin{proof} Let $Z=(\Lc f)^{-1}(0)$ be the kernel of the operator $\Lc f$, let $\Lc_u(X)/Z$ be the quotient locally convex space, and $q:\Lc_u(X)\to \Lc_u(X)/Z$ be the quotient operator. It follows that $\Lc f=\tilde f\circ q$ for some continuous bijective homomorphism $\tilde f:\Lc_u(X)/Z\to \Lc_u(Y)$. The equality $\tilde f\circ q\circ\delta_X=\Lc f\circ \delta_X=\delta_Y\circ f$ implies the equality $q\circ \delta_X=\tilde f^{-1}\circ\delta_Y\circ f$. Taking into account that the map $f$ is uniformly quotient and the map $q\circ \delta_X=\tilde f^{-1}\circ\delta_Y\circ f$ is uniformly continuous, we conclude that the map $\tilde f^{-1}\circ \delta_Y$ is uniformly continuous. Then the definition of the free locally convex space $\Lc_u(Y)$ implies that the linear operator $\tilde f^{-1}:\Lc_u(Y)\to \Lc_u(X)/Z$ is continuous. So, $\tilde f$ is a topological isomorphism. Since the quotient operator $q:\Lc_u(X)\to \Lc_u(X)/Z$ is open, so is the operator $\Lc f=\tilde f\circ q:\Lc_u(X)\to\Lc_u(Y)$.

By analogy we can prove that the operator $\Lin f:\Lin_u(X)\to\Lin_u(Y)$ is open.
\end{proof}

Since local $\mathfrak G$-bases are preserved by open continuous maps, we get the following corollary.

\begin{proposition}\label{p:quot} Let $f:X\to Y$ be a uniformly quotient map between uniform spaces. If the free (locally convex)  topological vector space $\Lin_u(X)$ (resp. $\Lc_u(X))$) has a local $\mathfrak G$-base, then the space $\Lin_u(Y)$ (resp. $\Lc_u(Y)$) has a local $\mathfrak G$-base, too.
\end{proposition}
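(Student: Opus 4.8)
The plan is to derive the proposition from Lemma~\ref{l:quot} via the general principle that a continuous, open, surjective linear operator between topological vector spaces carries a local $\mathfrak G$-base to a local $\mathfrak G$-base. Since a topological vector space is a topological group under addition, its translations are homeomorphisms, so a local $\mathfrak G$-base is equivalent to a $\mathfrak G$-base at the zero element; I would therefore work throughout at zero and transport the base forward along the induced operator.

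First I would record that the induced operator $\Lin f:\Lin_u(X)\to\Lin_u(Y)$ (and likewise $\Lc f$) has the three properties needed. It is continuous: as a uniformly quotient map, $f$ is uniformly continuous, and $\delta_Y$ is uniformly continuous, so the composition $\delta_Y\circ f:X\to\Lin_u(Y)$ is uniformly continuous; by the universal property of $\Lin_u(X)$ it extends to a continuous linear operator, which is exactly $\Lin f$. It is surjective because $f$ is onto: the image of $\Lin f$ is a linear subspace of $\Lin_u(Y)$ containing every Dirac measure $\delta_y=\Lin f(\delta_x)$ for $f(x)=y$, hence it contains the linear hull of $\delta_Y(Y)$, which is all of $\Lin_u(Y)$. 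Finally, it is open by Lemma~\ref{l:quot}. Thus $\Lin f$ is a continuous, open, surjective linear operator, and the same holds verbatim for $\Lc f:\Lc_u(X)\to\Lc_u(Y)$.

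Next I would run the image argument. Writing $g=\Lin f$ and letting $(U_\alpha)_{\alpha\in\w^\w}$ be a $\mathfrak G$-base at $0\in\Lin_u(X)$, the family $(g(U_\alpha))_{\alpha\in\w^\w}$ consists of neighborhoods of $0\in\Lin_u(Y)$ by openness of $g$; it is monotone, since $\alpha\le\beta$ gives $U_\beta\subset U_\alpha$ and hence $g(U_\beta)\subset g(U_\alpha)$; and it is a neighborhood base at $0$, since for any neighborhood $W$ of $0$ in $\Lin_u(Y)$ the continuity of $g$ makes $g^{-1}(W)$ a neighborhood of $0$, so some $U_\alpha\subset g^{-1}(W)$ yields $g(U_\alpha)\subset W$. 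Therefore $(g(U_\alpha))_{\alpha\in\w^\w}$ is a $\mathfrak G$-base at $0$, and by homogeneity $\Lin_u(Y)$ has a local $\mathfrak G$-base; the case of $\Lc f$ and $\Lc_u(Y)$ is identical.

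The only genuinely nontrivial input is the openness of the induced operators, which is precisely the content of Lemma~\ref{l:quot}; once that is granted, the remaining verifications (continuity via the universal property, surjectivity from surjectivity of $f$, and the monotone-cofinal computation for the image family) are all routine. Hence I anticipate no serious obstacle beyond correctly invoking Lemma~\ref{l:quot} and noting that continuous open surjections preserve $\mathfrak G$-bases.
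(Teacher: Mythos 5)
Your proposal is correct and follows essentially the same route as the paper: the paper deduces the proposition immediately from Lemma~\ref{l:quot} with the one-line observation that local $\mathfrak G$-bases are preserved by open continuous maps, and your argument simply spells out the routine details (continuity via the universal property, surjectivity of $\Lin f$ and $\Lc f$, and the transport of the base at zero) that the paper leaves implicit.
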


A map $f:X\to Y$ between topological spaces is called {\em $\IR$-quotient} if a function $\varphi:Y\to \IR$ is continuous if and only if the composition $\varphi\circ f:X\to\IR$ is continuous. It is clear that each quotient map is $\IR$-quotient.

\begin{corollary}\label{c:Rquot} Let $f:X\to Y$ be an $\IR$-quotient map of Tychonoff spaces. If the free (locally convex) topological vector space $\Lin(X)$ (resp. $\Lc(X))$) has a local $\mathfrak G$-base, then the space $\Lin(Y)$ (resp. $\Lc(Y)$) has a local $\mathfrak G$-base, too.
\end{corollary}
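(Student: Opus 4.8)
The plan is to establish, in direct analogy with Lemma~\ref{l:quot}, that the induced linear operators $\Lin f:\Lin(X)\to\Lin(Y)$ and $\Lc f:\Lc(X)\to\Lc(Y)$ are open; since local $\mathfrak G$-bases are preserved by open continuous surjections, the conclusion then follows exactly as Proposition~\ref{p:quot} follows from Lemma~\ref{l:quot}. One cannot simply invoke Proposition~\ref{p:quot} itself, because an $\IR$-quotient map of Tychonoff spaces need not be uniformly quotient for the finest uniformities (e.g. the quotient map of a countable topological sum of intervals onto their wedge glues, near the wedge point, pairs coming from distinct summands, so $\{U:(f\times f)^{-1}(U)\in\U(X)\}$ is strictly larger than $\U(Y)$). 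I will therefore rerun the argument of Lemma~\ref{l:quot} one level lower, at the purely topological level, replacing ``uniformly continuous'' by ``continuous'' and ``uniformly quotient'' by ``$\IR$-quotient''.

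Concretely, $f$ (continuous and surjective) induces, via the universal property of $\Lin(X)$ applied to the continuous map $\delta_Y\circ f:X\to\Lin(Y)$, a continuous linear surjection $\Lin f$ with $\Lin f\circ\delta_X=\delta_Y\circ f$. Writing $Z=\ker(\Lin f)$, which is closed since $\Lin(Y)$ is Hausdorff, I factor $\Lin f=\tilde f\circ q$ through the open quotient operator $q:\Lin(X)\to\Lin(X)/Z$, obtaining a continuous linear bijection $\tilde f:\Lin(X)/Z\to\Lin(Y)$. It remains to prove that $\tilde f$ is a topological isomorphism; by the universal property of $\Lin(Y)$ this reduces to showing that $g:=\tilde f^{-1}\circ\delta_Y:Y\to\Lin(X)/Z$ is continuous, for then the induced continuous operator $\bar g:\Lin(Y)\to\Lin(X)/Z$ agrees with $\tilde f^{-1}$ on the spanning set $\delta_Y(Y)$, forcing $\tilde f^{-1}=\bar g$ to be continuous. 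From $\Lin f\circ\delta_X=\delta_Y\circ f$ I get $q\circ\delta_X=g\circ f$, so the composition $g\circ f$ is already continuous.

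The crux is to upgrade continuity of $g\circ f$ to continuity of $g$, which is where the $\IR$-quotient hypothesis enters. The target $T:=\Lin(X)/Z$ is a Hausdorff topological vector space, hence Tychonoff, so its topology is initial with respect to the family $C(T)$ of continuous real-valued functions on $T$. Thus it suffices to check that $\psi\circ g$ is continuous for every $\psi\in C(T)$; but $(\psi\circ g)\circ f=\psi\circ(g\circ f)$ is continuous, and since $f$ is $\IR$-quotient this forces $\psi\circ g\in C(Y)$. Hence $g$ is continuous, $\tilde f$ is a topological isomorphism, and $\Lin f=\tilde f\circ q$ is open. The identical argument, carried out inside the category of locally convex spaces (where $T=\Lc(X)/Z$ is again Tychonoff), shows that $\Lc f$ is open. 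Finally, if $(W_\alpha)_{\alpha\in\w^\w}$ is a local $\mathfrak G$-base at $0$ in $\Lin(X)$ (resp. $\Lc(X)$), then the openness and surjectivity of $\Lin f$ (resp. $\Lc f$) make $(\Lin f(W_\alpha))_{\alpha\in\w^\w}$ (resp. $(\Lc f(W_\alpha))_{\alpha\in\w^\w}$) a local $\mathfrak G$-base at $0$, completing the proof.

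I expect the main obstacle to be precisely this upgrade step: unlike in Lemma~\ref{l:quot}, the map $g$ takes values in a topological vector space rather than in $\IR$, so the $\IR$-quotient property does not apply to $g$ directly. The resolution is the observation that a Hausdorff topological vector space is Tychonoff and hence carries the initial topology of its real-valued continuous functions, which lets one test the continuity of $g$ through the scalar functions $\psi\circ g$, to which the $\IR$-quotient hypothesis \emph{does} apply. A minor point worth recording is that surjectivity of $f$ is used both for the surjectivity of $\Lin f$ (resp. $\Lc f$) and for the final base-image argument; this is automatic for the quotient maps motivating the notion of an $\IR$-quotient map.
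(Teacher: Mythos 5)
Your proof is correct, and it takes a genuinely different route from the paper's. The paper proves Corollary~\ref{c:Rquot} by reducing to Proposition~\ref{p:quot}: it asserts that every $\IR$-quotient map between Tychonoff spaces is uniformly quotient with respect to the finest uniformities, verifying this by showing that the pullback pseudometric $d_X=d_Y\circ(f\times f)$ of a continuous pseudometric $d_Y$ on $Y$ is continuous (via the triangle inequality and the $\IR$-quotient property applied to the distance functions $d_Y(f(x_0),\cdot)$). Your skepticism about that reduction is well founded: under the paper's literal definition of uniformly quotient (the two-sided equivalence $U\in\U(Y)\Leftrightarrow(f\times f)^{-1}(U)\in\U(X)$ for all $U\subset Y\times Y$), the implication is false, and your wedge example does witness this --- taking a continuous pseudometric $d$ on the sum with scale $2^{-n}$ on the $n$-th spine and cross-spine distances bounded below by $1$, the set $U=(f\times f)([d]_{<1})$ has preimage in $\U(X)$ but contains no set $[\rho]_{<1}$, since every continuous pseudometric $\rho$ on the wedge makes points on distinct spines near the wedge point $\rho$-close; note also that, as written, the paper's verification establishes only the easy half (uniform continuity of $f$). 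What nevertheless saves the paper's overall scheme is that Lemma~\ref{l:quot} never uses the full equivalence: its single application of uniform quotientness is to conclude that $\tilde f^{-1}\circ\delta_Y$ is uniformly continuous from the uniform continuity of its composition with $f$, and for this it suffices that every pseudometric on $Y$ with continuous pullback is continuous --- a weaker ``pseudometric-final'' property that does follow from $\IR$-quotientness by the paper's own triangle-inequality computation read in the opposite direction. Your argument sidesteps the uniform category entirely and reruns the factorization $\Lin f=\tilde f\circ q$ (resp.\ $\Lc f=\tilde f\circ q$) at the topological level, testing continuity of $g=\tilde f^{-1}\circ\delta_Y$ into the Hausdorff, hence Tychonoff, quotient space through scalar functions $\psi\circ g$, to which the $\IR$-quotient hypothesis applies directly; this scalar-test step is precisely the topological shadow of the pseudometric repair just described. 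The remaining steps (surjectivity of the induced operator because $\delta_Y(Y)$ spans $\Lin(Y)$, identification of $\bar g$ with $\tilde f^{-1}$ on a spanning set, and the preservation of local $\mathfrak G$-bases under open continuous surjections together with translation invariance) are all sound, so your proposal is a correct, self-contained proof that in addition isolates and fixes the misstated reduction in the paper's version.
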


\begin{proof} In view of Proposition \ref{p:quot}, the conclusion follows  as soon as  we check that the $\IR$-quotient map $f:X\to Y$ is uniformly quotient (with respect to the finest uniformities $\U(X)$ and $\U(Y)$). Given a continuous pseudometric $d_Y$ on $Y$ we need to show that the pseudometric $d_X=d_Y(f\times f)$ is continuous. By the triangle inequality, it suffices to show that for every point $x_0\in X$ the map $d_X(x_0,\cdot):X\to \IR$, $d_X(x_0,\cdot):x\mapsto d_X(x_0,x)=d_Y(f(x_0),f(x))$, is continuous. Since the map $f:X\to Y$ is $\IR$-quotient, the continuity of the map $d_Y(f(x_0),\cdot):Y\to\IR$ implies the continuity of the map $d_X(x_0,\cdot)=d_Y(f(x_0),\cdot)\circ f$.
\end{proof}

We say that a topological space $X$ carries the {\em inductive topology with respect to a family}  $(X_n)_{n\in\omega}$ of subsets of $X$
if $V\subseteq X$ is open in $X$ if and only if $V\cap X_n$ is open in $X_n$ for every $n\in\omega$.

\begin{theorem}\label{t:limit} Assume that a Tychonoff space $X$ has the inductive topology with respect to a countable cover $\{ X_n\}_{n\in\omega}$ of $X$.
If for every $n\in\w$ the free (locally convex) topological vector space $\Lin(X_n)$ (resp. $\Lc(X_n)$) has a local $\mathfrak G$-base, then the free (locally convex) topological vector space $\Lin(X)$ (resp. $\Lc(X)$) has a local $\mathfrak G$-base, too.
\end{theorem}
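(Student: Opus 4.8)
The plan is to prove the statement for the free locally convex spaces $\Lc(\cdot)$; the version for $\Lin(\cdot)$ then follows at no extra cost, since for every Tychonoff space the equivalence $(\Lc)\Leftrightarrow(\Lin)$ of Theorem~\ref{t:Lin+Lc} identifies the two hypotheses ($\Lin(X_n)$ has a $\mathfrak G$-base $\Leftrightarrow$ $\Lc(X_n)$ has a $\mathfrak G$-base) and likewise the two conclusions. Identifying $X$ and each subspace $X_n$ with the corresponding uniform space carrying its finest uniformity, I recall from Theorem~\ref{t:u} that $\Tau_0(\Lc(X))\cong\E(C(X))$ and $\Tau_0(\Lc(X_n))\cong\E(C(X_n))$, where $C=C_u=C_\w$ for the finest uniformity. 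Thus the hypothesis reads $\w^\w\succcurlyeq\E(C(X_n))$ for every $n$, and the goal reduces to showing $\w^\w\succcurlyeq\E(C(X))$.

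The heart of the argument is the single reduction $\prod_{n\in\w}\E(C(X_n))\succcurlyeq\E(C(X))$. I would realize it by the map $\Phi$ sending a sequence $(E_n)_{n\in\w}$ to
$$\Phi\big((E_n)_{n}\big)=\{f\in C(X):f|X_n\in E_n\text{ for every }n\in\w\}.$$
Monotonicity with respect to coordinatewise inclusion is immediate. Cofinality is equally direct: given $E\in\E(C(X))$, each restriction family $E|X_n=\{f|X_n:f\in E\}$ lies in $\E(C(X_n))$ (the trace on $X_n\times X_n$ of an entourage of $\U(X)$ witnessing equicontinuity is an entourage of $\U(X_n)$, since the restriction of a continuous pseudometric is a continuous pseudometric), and plainly $E\subset\Phi\big((E|X_n)_{n}\big)$. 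So the only nontrivial point is well-definedness: that $G:=\Phi\big((E_n)_{n}\big)$ is always a pointwise bounded equicontinuous subset of $C(X)$.

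This well-definedness is the main obstacle, and I expect it to be the only delicate step; I would clear it by exploiting that $\U(X)$ is the \emph{finest} uniformity. Pointwise boundedness is routine: each $x\in X$ lies in some $X_n$, where $\{f(x):f\in G\}\subset\{g(x):g\in E_n\}$ is bounded. For equicontinuity, consider the evaluation map $e\colon X\to\ell^\infty(G)$, $e(x)=(f(x))_{f\in G}$, into the Banach space of bounded functions on $G$; by pointwise boundedness $e$ is well defined, and $G$ is equicontinuous exactly when $e$ is uniformly continuous. Here the two defining features of the situation combine: because $X$ carries the inductive topology with respect to $(X_n)_n$, the map $e$ is continuous as soon as each restriction $e|X_n$ is continuous, and $e|X_n$ is continuous because $G|X_n\subset E_n$ is equicontinuous on $X_n$. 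Finally, since $\U(X)$ is the finest uniformity, every continuous map from $X$ into a metric space is automatically uniformly continuous: for $g\colon X\to M$ the assignment $(x,y)\mapsto d_M(g(x),g(y))$ is a continuous pseudometric on $X$ and hence induces entourages of $\U(X)$. Applying this to $e\colon X\to\ell^\infty(G)$ yields the uniform continuity of $e$, i.e. the equicontinuity of $G$.

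Assembling the pieces, from $\w^\w\succcurlyeq\E(C(X_n))$ for every $n$ and the standard poset isomorphism $(\w^\w)^\w\cong\w^\w$ I obtain
$$\w^\w\cong(\w^\w)^\w\succcurlyeq\prod_{n\in\w}\E(C(X_n))\succcurlyeq\E(C(X))\cong\Tau_0(\Lc(X)),$$
so $\Lc(X)$ has a local $\mathfrak G$-base, and by $(\Lc)\Leftrightarrow(\Lin)$ of Theorem~\ref{t:Lin+Lc} so does $\Lin(X)$. Everything outside the equicontinuity step is the bookkeeping of monotone cofinal maps and of reductions of countable products, which is already standard in the paper.
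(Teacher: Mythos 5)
Your proof is correct, but it takes a genuinely different route from the paper's. The paper argues structurally: it forms the topological sum $\Sigma=\bigcup_{n\in\w}\{n\}\times X_n$, identifies $\Lin(\Sigma)$ with the subspace $\cbox_{n\in\w}\Lin(X_n)$ of the box product $\square_{n\in\w}\Lin(X_n)$ (a fact imported from Morris and Gabriyelyan--Morris), observes that this box subspace has a local $\mathfrak G$-base if and only if every factor does, and then transfers the base along the natural map $q\colon\Sigma\to X$, which is $\IR$-quotient precisely because $X$ carries the inductive topology; the transfer rests on Lemma~\ref{l:quot} and Corollary~\ref{c:Rquot} (induced operators of uniformly quotient maps are open, and open continuous maps preserve local $\mathfrak G$-bases). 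You instead stay entirely inside the poset machinery of Theorem~\ref{t:u}: you translate hypothesis and conclusion into the posets $\E(C(\cdot))$ and glue equicontinuous families by your map $\Phi$. Your key step --- that a pointwise bounded family $G\subset C(X)$ whose restrictions to each $X_n$ lie in an equicontinuous family is itself equicontinuous for $\U(X)$, argued via the evaluation map $e\colon X\to\ell^\infty(G)$, continuity detection on the pieces from the inductive topology, and automatic uniform continuity of continuous maps into metric spaces for the finest uniformity --- is sound, and it is exactly the point where the paper instead invokes the quotient map; your cofinality direction is also fine, since restrictions of continuous pseudometrics witness that traces of entourages of $\U(X)$ on $X_n\times X_n$ belong to $\U(X_n)$. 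The trade-offs: the paper's functorial route needs the external identification of $\Lin(\Sigma)$ with the box subspace but treats $\Lin$ directly and yields as a by-product the quotient-stability results (Proposition~\ref{p:quot}), valid even for free spaces over arbitrary uniform spaces; your route is self-contained modulo Theorem~\ref{t:u}, but is tied to the finest uniformity (i.e.\ the topological case, which is all the theorem asserts) and obtains the $\Lin$ statement by citing the equivalence $(\Lc)\Leftrightarrow(\Lin)$ of Theorem~\ref{t:Lin+Lc} --- legitimate and non-circular, since that theorem is established independently of Theorem~\ref{t:limit}, though a direct $\Lin$ argument via Theorem~\ref{t:lin-top} would also have been available.
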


\begin{proof} Let $\Sigma=\bigcup_{n\in\w}\{n\}\times X_n$ be the topological sum of the topological spaces $X_n$, $n\in\w$.  Observe that the free topological vector space $\Lin(\Sigma)$ can be identified with the subspace $$\cbox_{n\in\w}\Lin(X_n)=\{(x_n)_{n\in\w}\in\square_{n\in\w}\Lin(X_n):\exists n\in\w\;\forall m > n\;\;x_m=0\}$$of the box-product $\square_{n\in\w}\Lin(X_n)$.
This fact was proved in a general form in \cite{Morris_functor}
and is reproved in \cite{GabMor}.
It is clear that the box-product $\square_{n\in\w}\Lin(X_n)$ has a local $\mathfrak G$-base if and only if all spaces $\Lin(X_n)$, $n\in\w$, have a local $\mathfrak G$-base.

Since $X$ is the image of $\Sigma$ under the natural quotient map $q\colon \Sigma\to X$, $q\colon (n,x)\mapsto x$, we can apply Corollary~\ref{c:Rquot} and conclude that the free topological vector space $\Lin(X)$ has a local $\mathfrak G$-base if all spaces $\Lin(X_n)$, $n\in\w$, have local $\mathfrak G$-bases.

The case of the free locally convex spaces can be considered analogously.
\end{proof}

Combining Theorems~\ref{t:limit} and \ref{t:Lin+Lc} we get the following corollary which provides many examples of the free (locally convex) topological vector spaces with  local $\mathfrak G$-bases.

\begin{corollary}\label{c:ind-free} Assume that a Tychonoff space $X$ carries the inductive topology with respect to an increasing cover $(X_n)_{n\in\w}$ of its metrizable $\sigma$-compact subspaces. Then the free locally convex space $\Lc(X)$ and the free topological vector space $\Lin(X)$ have  local $\mathfrak G$-bases.
\end{corollary}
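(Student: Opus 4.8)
The plan is to obtain the conclusion by assembling two previously established results: the metrizable case recorded in Corollary~\ref{cor:metr}, and the inductive-limit stability of local $\mathfrak G$-bases proved in Theorem~\ref{t:limit}. The corollary will handle each piece $X_n$ of the cover, and the theorem will then propagate the property to the whole space $X$.

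First I would fix $n\in\w$ and use that, by hypothesis, the subspace $X_n$ is metrizable and $\sigma$-compact. Applying Corollary~\ref{cor:metr} to $X_n$, I would conclude that both the free locally convex space $\Lc(X_n)$ and the free topological vector space $\Lin(X_n)$ have local $\mathfrak G$-bases. (Recall that Corollary~\ref{cor:metr} itself is the specialization of Theorem~\ref{t:Lin+Lc}(5) to metrizable spaces, resting ultimately on Proposition~\ref{p:sigma'} and Theorem~\ref{t:s->dom} via the equivalence with the condition $(M\sigma)$.) Since this holds for every $n\in\w$, all the pieces $\Lin(X_n)$, respectively $\Lc(X_n)$, carry local $\mathfrak G$-bases.

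Next, since $X$ carries the inductive topology with respect to the countable increasing cover $\{X_n\}_{n\in\w}$, I would invoke Theorem~\ref{t:limit}, which transfers the existence of a local $\mathfrak G$-base from the pieces $\Lin(X_n)$ (resp. $\Lc(X_n)$) to $\Lin(X)$ (resp. $\Lc(X)$). This yields exactly the desired conclusion for both free objects. The argument is a direct two-step assembly, so I do not expect a genuine obstacle; the only point requiring a moment's attention is that Theorem~\ref{t:limit} needs merely a \emph{countable} cover, so the increasing hypothesis of the corollary serves only to guarantee a well-behaved cover of metrizable $\sigma$-compact subspaces, while the substantive work has already been absorbed into Corollary~\ref{cor:metr} and into the box-product identification of $\Lin(\Sigma)$ underlying Theorem~\ref{t:limit}.
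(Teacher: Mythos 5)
Your proposal is correct and is essentially the paper's own proof: the paper derives the corollary by combining Theorem~\ref{t:Lin+Lc} (whose case (5), specialized to metrizable spaces, is exactly Corollary~\ref{cor:metr}) applied to each metrizable $\sigma$-compact piece $X_n$, with the inductive-limit transfer of Theorem~\ref{t:limit}. Your added observation that Theorem~\ref{t:limit} only needs a countable cover, so the increasing hypothesis is not essential to the assembly, is also accurate.
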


\end{document}